\documentclass{amsproc}
\usepackage{amsmath}
\usepackage{amssymb}
\usepackage{amsthm}
\usepackage{latexsym}
\usepackage{graphicx}
\usepackage{color}
\newenvironment{adding}[1]{\marginpar{{\bf [}\footnotesize #1}\bf }{\marginpar{\bf ]}}
\newenvironment{addingwithoutmargin}{\bf }{}

\newcommand{\badd}[1]{\begin{adding}{#1}}
\newcommand{\eadd}{\end{adding}}
\newcommand{\baddw}{\begin{addingwithoutmargin}}
\newcommand{\eaddw}{\end{addingwithoutmargin}}

\catcode`\@=11\def\th@mydefinition{%
  \thm@notefont{\bfseries}
}
\catcode`\@=12
\newtheorem{theorem}{Theorem}[section]
\newtheorem{prop}[theorem]{Proposition}
\newtheorem{lemma}[theorem]{Lemma}
\newtheorem{pty}[theorem]{Property}
\newtheorem{corollary}[theorem]{Corollary}

\newtheorem{problem}[theorem]{Problem}
\theoremstyle{mydefinition}
\newtheorem{example}[theorem]{Example}
\theoremstyle{definition}
\newtheorem{ex}[theorem]{Example}
\newtheorem{definition}[theorem]{Definition}
\newtheorem{re}[theorem]{Remark}

\newcommand{\wdim}{\operatorname{dim}_{\mathrm w}}
\newcommand{\edim}{\operatorname{ed}}
\newcommand{\edimw}{\edim_{\mathrm{w}}}
\newcommand{\edimgm}{\edim_{\mathrm{GM}}}
\newcommand{\edimtrop}{\edim_{\mathrm{t}}}
\newcommand{\set}[2]{\{#1;\,#2\}}
\newcommand{\sgn}{\operatorname{sgn}}
\newcommand{\NEW}[1]{{\em #1\/}\index{#1}}
\newcommand{\adj}{^{\mathrm{adj}}}
\newcommand{\adjp}{^{\mathrm{adj}+}}
\newcommand{\adjm}{^{\mathrm{adj}-}}
\newcommand{\balance}{\,\nabla\, }
\newcommand{\bal}{\balance}
\newcommand{\notbalance}{\,\nabla\!\!\!\!/\ }
\newcommand{\rkdet}{\rk_{\mathrm{det}}}
\newcommand{\dett}[1]{\operatorname{bidet}(#1)}
\newcommand{\detp}[1]{|#1|}
\newcommand{\mymod}[1]{\operatorname{m}(#1)}
\newcommand{\card}[1]{\# #1}
\def\allperm{\mathfrak{S}}
\def\permeven{\mathfrak{A}}
\def\mr{\operatorname{mr}}
\def\mc{\operatorname{mc}}
\def\er{\operatorname{er}}
\def\ec{\operatorname{ec}}
\newcommand{\mrgm}{\mr_{\mathrm{GM}}}
\newcommand{\mcgm}{\mc_{\mathrm{GM}}}
\newcommand{\mrtrop}{\mr_{\mathrm{t}}}
\newcommand{\mctrop}{\mc_{\mathrm{t}}}
\newcommand{\mrw}{\mr_{\mathrm{w}}}
\newcommand{\mcw}{\mc_{\mathrm{w}}}
\newcommand{\erw}{\er_{\mathrm{w}}}
\newcommand{\ecw}{\ec_{\mathrm{w}}}
\newcommand{\ergm}{\er_{\mathrm{GM}}}

\newcommand{\ertrop}{\er_{\mathrm{t}}}

\def\sr{\operatorname{sr}}
\def\ror{\operatorname{r}}
\def\cor{\operatorname{c}}
\def\cosp{\operatorname{\mathcal C}}
\def\rosp{\operatorname{\mathcal R}}
\newcommand{\trop}{\operatorname{trop}}
\def\termrk{\operatorname{term}}
\def\facrk{\operatorname{f}}

\def\d{\begin{proof}}
\def\dd{\end{proof}}
\newcommand{\rk}{\operatorname{rk}}
\newcommand{\mymathbf}{}
\def\cal#1{\mathcal #1}
\makeatletter
\@addtoreset{equation}{section}

\makeatother
\newcommand{\rmax}{\mathbb{R}_{\max}}

\newcommand{\smax}{\mathbb{S}_{\max}}

\def\SS{{\cal S}}
\def\MM{{\cal M}}
\def\Mnm{{\cal M}_{mn}}
\def\e{\begin{equation}}
\def\ee{\end{equation}}
\def\RR{{\cal R}}
\def\M{{\cal M}_{n}}
\newcommand{\Z}{{\mathbb Z}}
\newcommand{\B}{{\mathbb B}}
\def\R{{\mathbb R}_{\max}}
\def\Ub{\Re^{\nu}}

\def\Ti{{\mathbb T}_{\mathrm e}}
\def\Re{{\mathbb R}}
\def\S{{\mathbb S}_{\max}}
\def\BB{{\mathbb B}^{{\mathrm s}}}

\newcommand{\N}{\mathbb{N}}
\newcommand{\Q}{\mathbb{Q}}

\newcommand{\per}{\operatorname{per}}

\DeclareMathAlphabet{\mathbbold}{U}{bbold}{m}{n}
\newcommand{\zero}{\mathbbold{0}}
\newcommand{\unit}{\mathbbold{1}}
\newcommand{\oo}{0}
\newcommand{\oi}{1}
\newcommand{\ooo}{\zero}
\newcommand{\ooi}{\unit}
\begin{document}

\title{Linear independence over tropical semirings and beyond}
\author{Marianne Akian}
\address{Marianne Akian,
INRIA Saclay--\^Ile-de-France and CMAP, \'Ecole 
Polytechnique. Address:
CMAP, \'Ecole Polytechnique,
Route de Saclay,
91128 Palaiseau Cedex, France.}
\email{Marianne.Akian@inria.fr}
\author{St\'ephane Gaubert}
\address{St\'ephane Gaubert,
INRIA Saclay--\^Ile-de-France and CMAP, \'Ecole 
Polytechnique. Address: CMAP, \'Ecole Polytechnique,
Route de Saclay,
91128 Palaiseau Cedex, France.}
\email{Stephane.Gaubert@inria.fr}
\author{Alexander Guterman}
\address{Alexander Guterman, Moscow State University, Leninskie Gory, 119991, GSP-1,
Moscow, Russia}
\email{guterman@list.ru}
%
\thanks{The two first authors were partially supported by the joint RFBR-CNRS grant 05-01-02807.}
\thanks{The third author was partially supported by the invited professors program from INRIA Paris-Rocquencourt and by the grants MK-2718.2007.1 and RFBR 08-01-00693a.}

\date{\today}
\begin{abstract}
We investigate different notions of linear independence
and of matrix rank that are relevant for max-plus or tropical semirings.
The factor rank and tropical rank have already received attention,
we compare them with the ranks defined in terms of signed tropical
determinants or arising from a notion of linear independence introduced by Gondran and Minoux.
To do this, we revisit the symmetrization of the max-plus algebra,
establishing properties of linear spaces, linear systems,
and matrices over the symmetrized max-plus algebra. In parallel we develop some general technique to prove combinatorial and polynomial identities for matrices over semirings that we illustrate by a number of examples.
\end{abstract}
\maketitle


\section{Introduction}
The max-plus semiring $\rmax$ is the set of real numbers, 
completed by $-\infty$, equipped with the
addition $(a,b)\mapsto \max(a,b)$ and the multiplication $(a,b)\mapsto a+b$.
The name ``tropical'' is now used either as a synonym of ``max-plus'',
or in a wider sense, referring to algebraic
structures of a similar nature.

Max-plus structures have appeared in relation with
in a variety of fields, like operations research and optimization~\cite{vorobyev67,CG,gondranminouxbook}, discrete
event systems~\cite{BCOQ92,ccggq99}, automata theory (see~\cite{pin} and its references), quasi-classic asymptotics~\cite{maslov73}, Hamilton-Jacobi partial differential equations and optimal control \cite{duality,maslovkolokoltsov95,litvinov00,mceneaney}, and more recently, tropical algebraic
geometry, see in particular~\cite{viro,mikhalkin,kapranov,passare,itenberg,RGST}.


This has motivated the investigation of
the analogues of basic questions in algebra, among
which linear independence is an elementary but central one.
The study
of max-plus linear independence goes back to the work
of Cuninghame-Green~\cite{CG}, who defined a family
to be {\em weakly independent} if no vector of the family is a linear
combination of the others. This notion was further
studied by Moller~\cite{Mol} and Wagneur~\cite{Wag}, who
showed that a finitely generated linear subspace
of $\rmax^n$ admits a weakly independent generating family
which is unique up to a reordering and a scaling of its vectors.
This result was made more precise in further works
by Butkovi\v{c} and Cuninghame-Green~\cite{CGB}, Gaubert and Katz~\cite{GK},
Butkovi\v{c}, Sergeev and Schneider~\cite{BSS}. They developed
a theory of extreme rays of max-plus linear spaces or ``cones''
(as in classical convexity, a ray is the set of scalar multiples of a single vector). This theory shows in particular that the weakly independent
generating family can be identified to the set of extreme rays.

\def\<#1>{\langle#1\rangle}
The analogy with classical convex geometry can even
be made more formal by noting that the map $\langle\cdot\rangle$ which associates to
a finite set of rays 
in $\rmax^n$ the linear span that it generates
satisfies the {\em anti-exchange axiom}:
\[
y,z\not\in \<X>,\; z\in \<X\cup\{y\}>\implies y\not\in \<X\cup\{z\}>
\]
whereas the classical linear span of a set of vectors
satisfies the {\em exchange axiom}
\[
y\not\in\<X>,y\in \<X\cup\{z\}>\implies z\in \<X\cup \{y\}> \enspace .
\]
In other words, weak independence belongs to the theory of antimatroids
rather than matroids~\cite{korte}.

Gondran and Minoux~\cite{GM2} defined a different notion of independence,
which turns out to be closer to the classical one. A finite family
is linearly dependent in their sense if it can be partitioned
in two families that generate linear spaces with an intersection
that is not reduced to the zero vector. They gave a remarkable
characterization of the families
of $n$ vectors of $\rmax^n$ which are linearly
dependent, in terms of the ``vanishing'' of their ``determinant''
(this condition requires the sum of the weights of odd and even permutations to coincide).

The theorem of Gondran and Minoux was extended 
in a paper published by an imaginary researcher
named M. Plus~\cite{Plus}, in which a symmetrized
max-plus semiring $\smax$ was introduced, as well as a generalization
of the notion of linear systems, in which the equality relation
is replaced by a ``balance'' relation denoted $\balance$. 
The symmetrized max-plus semiring comprises, besides the max-plus
numbers, some  ``negative''
and ``balanced'' numbers. A negative solution $x$ to a balance relation of the form
$a\oplus x\balance b$ means that the equation in which $x$ is put on the other
side of the equality, $a=x\oplus b$, has a solution over $\rmax$.
On this basis, M.~Plus developed an elimination technique, 
allowing him to establish generalizations
of the Gondran-Minoux theorem, as well as analogues
of several results of linear algebra including a ``Cramer rule''~\cite{Plus} (see also~\cite{maxplus90a,gaubert92a,BCOQ92,maxplus97}).  

In the recent work on tropical geometry, a different
notion of independence has emerged: a family
of vectors is said to be {\em tropically dependent}
if we can make a linear (i.e.\ max-plus) combination of its vectors
in such a way that the maximum in every row is attained
at least twice. Richter-Gebert, Sturmfels and Theobald~\cite{RGST} established
an analogue of Cramer theorem which applies to systems of ``tropical''
equations (in which the maximum in every row is required to be
attained at least twice, rather than being equal to the zero element
of the semiring). Izhakian~\cite{Izh05,Izh} introduced an extension of the max-plus semiring, 
which is somehow reminiscent of the symmetrized max-plus semiring,
and has two kind of elements, the ``real'' ones (which can
be identified to elements of the max-plus semiring) and some
``ghost'' elements which are similar to the ``balanced'' ones.
This allowed him to interpret the notion
of tropical linear dependence in terms of suitably generalized
equations over his extended semiring, and to relate tropical
linear independence with the ``non-vanishing'' of determinants
(here, a determinant vanishes if there are at least two permutations
of maximal weight).

In this paper, we give a unified treatment making explicit the analogy
between tropical linear independence and Izhakian's extension,
on the one hand, and  Gondran-Minoux dependence and M.~Plus's 
symmetrization of the max-plus semiring, on the other hand.
This unification yields as a byproduct some further results.

The latter analogy is best explained in terms of amoebas.
Let $K\{\{t\}\}$ denote the field of Puiseux series 
in a variable $t$ with coefficients  in a field $K$,
convergent in a neighborhood of the origin.
There is a canonical valuation $v$, sending a Puiseux
series to the opposite of the minimal exponent arising in its expansion.
This valuation is almost a morphism from $K\{\{t\}\}$ to $\rmax$,
since $v(a+b)\leq \max(v(a),v(b))$
with equality if the maximum is attained only once or if the monomials
of minimal degree of $a$ and $b$ do not cancel, whereas $v(ab)=v(a)+v(b)$.

Special instances of {\em non-archimedean amoebas}~\cite{kapranov,itenberg} 
are obtained by taking images of algebraic varieties
of $(K\{\{t\}\})^n$ by this valuation (acting entrywise), and
max-plus algebraic structures may be thought of as polyhedral or
combinatorial tools to study such amoebas.

In a nutshell, tropical linear independence and Izhakian's
extension arise when considering amoebas of linear spaces
when the field of coefficients $K=\mathbb{C}$ whereas Gondran-Minoux linear independence and M.~Plus's symmetrization arise when taking $K=\mathbb{R}$. Indeed, if the series $a_{ij},x_j\in \mathbb{C}\{\{t\}\}$,
for $1\leq i\leq n,1\leq j\leq p$,
satisfy
\begin{align}
\sum_{1\leq j\leq p} a_{ij}x_j =0 \enspace ,\;1\leq i\leq n
\label{e-eqlin}
\end{align}
it is clear that in every expression
\[
\max_{1\leq j\leq p} v(a_{ij})+v(x_j) \enspace ,\;1\leq i\leq n
\]
the maximum must be attained at least twice (otherwise, the expression~\eqref{e-eqlin} would be nonzero due to the presence of a leading monomial).
However, if all the series $a_{ij},x_j$ belong to $\mathbb{R}\{\{t\}\}$, 
they must keep a constant sign as $t\to 0^+$, leading to a more
precise relation. For instance, if the $a_{ij}$ all have a positive
sign as $t\to 0^+$, denoting $J^+$ (resp.\ $J^-$) the set of $j$ for which $x_j$ is positive (resp.\ negative) as $t\to 0^+$, we deduce that
\[
\max_{j\in J^+} v(a_{ij})+v(x_j) =  \max_{j\in J^-} v(a_{ij})+v(x_j)\enspace ,\;1\leq i\leq n
\]
showing that the columns of the matrix obtained by applying the valuation $v$
to every column of the matrix $(a_{ij})$ is linearly dependent in the Gondran-Minoux
sense.



The goal of this paper, which is intended as a survey, although it contains some
new results, is to draw attention to the symmetrization of the max-plus semiring, that we revisit in the light of the recent developments of tropical geometry. 
We show in particular that the proof of the Cramer theorem of~\cite{Plus},
relying on elimination, also yields, by a mere substitution
of Izhakian's semiring to the symmetrized semiring, a
(slightly extended version) of the tropical Cramer theorem of~\cite{RGST},
see Theorem~\ref{th-cramerI} and Corollary~\ref{coro-sturmf} below.

The proof of these ``Cramer theorems'' relies on a series of results,
and in particular, on the semiring analogues of classical determinantal
identities. We establish in Section~\ref{sec-combinat} a general
transfer principle, building on an idea of Reutenauer and Straubing~\cite{reutstraub}, which shows that the semiring analogue of every classical identity
can be proved automatically (there is no need to find a ``bijective proof'').
In the next two sections, we develop a theory of ``symmetrized semirings'',
which allow us to see both the symmetrization of the max-plus semiring
and its extension by Izhakian as special cases of a unique construction.

We note that the notion of symmetrized semiring, as well as a version of the transfer principle,
first appeared in~\cite{gaubert92a}, but at that time, tropical geometry was
not developed yet and the max-plus symmetrized semiring seemed to be the most (if not the only)
significant model of this structure, which was therefore not further investigated.
However, some extensions of the max-plus semiring like jets~\cite{finkelstein,Bapat} 
or Izhakian's extension~\cite{Izh,Izh05} appeared later on, providing further interesting models. The unification and comparison of these approaches is a novelty of the present
paper. 

In the last two sections, we examine
different notions of matrix rank that appear to be of interest in the max-plus semiring. Such an enterprise was carried out by Develin, Santos, and Sturmfels in~\cite{DSS} , but the ranks
relying on symmetrization or on Gondran-Minoux linear independence were
missing in~\cite{DSS}. We establish inequalities comparing them with the other ranks
which are relevant in the max-plus setting. We also
establish the analogues of several classical inequalities for matrix rank.

\section{Linear independence}\label{S1}

\begin{definition}
A \NEW{semiring} is a set $\SS$ with two binary operations, addition, denoted by $+$,
and multiplication, denoted by $\cdot$ or by concatenation, such 
that:
\begin{itemize}
\item $\SS$ is an abelian monoid under addition (with neutral element denoted by $\oo$ and called zero);
\item $\SS$ is a semigroup under multiplication (with neutral element denoted
by $\oi$ and called unit);
\item multiplication is distributive over addition on both sides;
\item $s \oo=\oo s=\oo$ for all $s\in \SS$.
\end{itemize}
\end{definition}


Briefly, a semiring differs from a ring by the fact that 
an element may not have an additive inverse.
The most common examples of semirings which are not rings are non-negative
integers $\N$, non-negative  rationals $\Q_+$ and
non-negative
reals ${\mathbb R}_+$ with the usual addition and multiplication. There are
classical examples of non-numerical semirings as well. Probably the first such
example appeared in the work of Dedekind \cite{Dedekind} in connection
with the algebra of ideals of a commutative ring (one can add and multiply
ideals but it is not possible to subtract them). 

\begin{definition}
A semiring or an abelian monoid $\SS$ is called \NEW{idempotent} if $a+a=a$  for all $a\in\SS$.
\end{definition}

\begin{definition}
A semiring $\SS$ is called \NEW{zero-sum free} or \NEW{antinegative} if $a+b=0$ implies $a=b=0$ for all $a,b\in\SS$.
\end{definition}

\begin{re} An idempotent semiring is necessary zero-sum free.
\end{re}

\begin{definition}
A semiring $\SS$ is called \NEW{commutative}  if the multiplication is commutative, i.e. $a\cdot b=b\cdot a$ for all $a,b\in\SS$.
\end{definition}
We shall always assume that the semiring $\SS$ is commutative.
In this paper, we mostly deal with idempotent semirings.

The most common example of idempotent semiring is the max-plus semiring
$$\R:=({\mathbb R}\cup \{ -\infty \}, \oplus, \odot),$$
where $a\oplus b= \max\{a,b\}$ and $a\odot b= a+b$.  Here the zero element of the semiring is $-\infty$, denoted by $
\ooo$, and the unit of the semiring is 0, denoted by $\ooi$. 
More generally, idempotent semirings are called
\NEW{max-plus algebras},  or \NEW{max-algebras},  or  
\NEW{tropical algebras}. They are naturally ordered by the relation $a\le  b$ if $a+b=b$. Then $a + b$ is the supremum of $a$ and $b$ for the order $\le$ and the 
neutral element for the addition 
is the minimal element for the order $\le$.
We shall be mostly interested in $\R$, but some of our considerations hold for general idempotent semirings as well.


\begin{definition} A \NEW{semimodule}, $M$, over a semiring $\SS$ 
is an abelian monoid under addition which has a neutral element, ${\mymathbf 0}$, and is equipped with a law
$$
\begin{array}{ccc} \SS \times M & \to & M \\ (s,{\mymathbf m}) & \to & s \cdot {\mymathbf m} \end{array}
$$
called \NEW{action} or \NEW{scalar multiplication} such that  for all ${\mymathbf m}$ and ${\mymathbf m}'$ in $M$ and $r,s \in \SS$
\begin{enumerate}
\item 
$(s \cdot   r)  \cdot {\mymathbf m} = s  \cdot (r  \cdot {\mymathbf m})$,
\item
$(s + r) \cdot {\mymathbf m} = s  \cdot {\mymathbf m} + r  \cdot {\mymathbf m}$, 
\item
$s\cdot  ({\mymathbf m} + {\mymathbf m}') = s\cdot  {\mymathbf m} + s\cdot  {\mymathbf m}'$,
\item
$\oi \cdot  {\mymathbf m} = {\mymathbf m}$,
\item
$s\cdot   {\mymathbf 0}={\mymathbf 0}= \oo \cdot   {\mymathbf m}$.
\end{enumerate}
\end{definition}

In the sequel, we shall often denote the action by concatenation,
omiting the symbol ``$\cdot$''.

\begin{re}
If $\SS$ is idempotent, then necessarily $M$ is idempotent.
\end{re}
\begin{re}
The usual definition of matrix operations carries over to an arbitrary semiring, which allows us to think of the set of  $m\times n$ matrices $\MM_{m,n}(\SS)$ as a semimodule over~$\SS$. When $\SS=\R$, we will denote it just by ${\cal M}_{m,n}$. Also we denote ${\cal M}_{n}(\SS)={\cal M}_{n,n}(\SS)$ 
and we identify $\SS^n$ to ${\cal M}_{n,1}(\SS)$.
\end{re}

\begin{definition}
An element ${\mymathbf m}$ in a semimodule $M$ over $\SS$ is called a \NEW{linear combination} of elements from a certain subset $P\subseteq M$ if there exists $k\ge 0$, $s_1,
\ldots, s_k\in \SS$, ${\mymathbf m}_1,\ldots, {\mymathbf m}_k\in P$ such that ${\mymathbf m}=\sum\limits_{i=1}^k s_i \cdot  {\mymathbf m}_i$ with the convention that an empty sum is equal to $\ooo$.  In this case $\sum\limits_{i=1}^k s_i \cdot  {\mymathbf m}_i$ is called a \NEW{linear combination} of the elements ${\mymathbf m}_1,\ldots, {\mymathbf m}_k$ from $P$ with coefficients $s_1,\ldots, s_k$ in $\SS$.
\end{definition}
Note that by definition all linear combinations are finite. 
\begin{definition} The \NEW{linear span}, $\langle P\rangle$, of a family or set $P$ of elements of a semimodule $M$ over a semiring $\SS$ is the set of all linear combinations of elements from $P$ with coefficients from $\SS$. We say that the family $P$ \NEW{generates} or \NEW{spans} $M$ if $\langle P\rangle=M$, and that $P$ envelopes a subset $V\subseteq M$ in $M$ if $V\subseteq \langle P\rangle$. 
\end{definition}
As over fields and rings, $\SS^n$ is spanned by the set 
$$\{ [\oi,\oo, \ldots, \oo]^t, [\oo,\oi,\oo,\ldots, \oo]^t, \ldots, [\oo,\ldots, \oo,\oi]^t \} \enspace .
$$ 
Here, and in the sequel, the transposition of vectors or matrices is denoted
by putting the symbol $t$ as a superscript.

In contrast with vector spaces over fields, there are several ways to define the notion of linear dependence over max-plus algebras. In such algebras,
a sum of non-zero vectors cannot vanish. Hence, the classical definition
cannot be used. A natural replacement is the following.

\begin{definition}[\cite{GM,GM2}] \label{D2} 
A family ${\mymathbf m}_1,\ldots, {\mymathbf m}_k$ of elements of a semimodule $M$ over a semiring $\SS$ is 
\NEW{linearly dependent  (resp.\ independent) in the Gondran-Minoux sense} if there exist (resp.\ there does not exist)
two subsets $I,J\subseteq K:=\{1,\ldots, k\}$, $I\cap J=\emptyset$, $I\cup J=K$, and scalars $\alpha_1,\ldots,\alpha_k\in\SS$, not all equal to $\oo$, such that
$\sum \limits_{i\in I} \alpha_i\cdot  {\mymathbf m}_i = \sum\limits_{j\in J} \alpha_j\cdot  {\mymathbf m}_j$.
  \end{definition}

The following notion of linear dependence can be found in~\cite{CG,Wag}, see also~\cite{CGB} and references therein.

\begin{definition} \label{D1} 
A family $P$ of elements of a semimodule $M$ over a semiring $\SS$ is 
\NEW{weakly linearly dependent (resp.\ independent)} if there is an element (resp.\ there is no element)
in $P$ that can be expressed as
a linear combination of other elements of $P$. 
\end{definition}

\begin{re} \label{RIn1} 
A family of vectors which is independent in the Gondran-Minoux sense is also independent in the weak sense. But the converse may not be true, as it is shown in the following example. 
\end{re}

\begin{ex} \label{ExD1D2}
The vectors 
  $[x_i,\ooi, -x_i]^t$ in $\R^3$, $i=1,2,\ldots,m$, are weakly linearly independent for any $m$ and for different $x_i$ (see e.g.~\cite{CGB} for details). However, by Corollary~\ref{LS} below, any four of these vectors must be linearly dependent in the Gondran-Minoux sense. 

As a concrete example, the vectors ${\mymathbf v}_i:=[i ,\ooi,-i]^t$, $i=1,2,3,4$, are linearly dependent in the Gondran-Minoux sense since
  $$ (-1) \cdot {\mymathbf v}_1 \oplus  \ooi \cdot {\mymathbf v}_3  =
\ooi \cdot  {\mymathbf v}_2 \oplus (-1) \cdot  {\mymathbf v}_4\enspace .$$
\end{ex}


\begin{definition}\label{defi-weakdim}
For a general semimodule $M$ over a general semiring $\SS$, we define the \NEW{weak dimension} of $M$ as 
$$ \wdim(M)= \min\set{\card{P}}{P\; \text{is a  weakly independent generating family of } M},$$
where $\card{P}$ denotes the cardinality of $P$ when  the set $P$ is finite and 
$\card{P}=+\infty$ otherwise.
\end{definition}
\begin{re}\label{rem-weakdim}
The weak dimension of a semimodule $M$ is equal to the minimal cardinality of a minimal generating family or the minimal cardinality of a generating family of $M$.
\end{re}

\begin{re}\label{re:weaknonincr}
Example~\ref{ExD1D2} shows that 
the weak dimension is in general not increasing.
Indeed, let $V$ be the subsemimodule of $\R^3$ 
generated by the weakly independent vectors ${\mymathbf v_i}:= [i,\ooi,-i]$ of Example~\ref{ExD1D2}. Then, $\wdim(V)=4>\wdim(\R^3)$, whereas $V\subset \R^3$.
\end{re}

Weakly independent generating families over the max-plus algebra can be obtained as follows:

\begin{definition}
An element ${\mymathbf u}$ of a semimodule $M$ over $\R$ is called an \NEW{extremal generator} (or the family $\{\lambda\cdot  {\mymathbf u}\vert \lambda\in \R\}$ is called an \NEW{extremal ray}), if the equality ${\mymathbf u}={\mymathbf v}\oplus {\mymathbf w}$ in $M$  implies that either ${\mymathbf u}={\mymathbf v}$ or ${\mymathbf u}={\mymathbf w}$.
\end{definition}
The following results shows that the subsemimodules of $\R^n$ are similar to the classical convex pointed cones.
\begin{theorem}[``Max-plus Minkowski'', {\cite[Theorem 3.1]{GK} or \cite[Proposition 24]{BSS}}] \label{T:ext_rays} Let $M$ be a closed subsemimodule of $\R^n$. Then the set of extremal generators of $M$ generates $M$, and every element of $M$ is the sum of at most $n$ extremal generators of $M$.  
\end{theorem}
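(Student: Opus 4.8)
The plan is to derive both assertions from a single decomposition statement: every $\mathbf{z}\in M$ can be written as $\mathbf{z}=\bigoplus_{g\in D}\mathbf{u}^g$, where $D=\{g:z_g\neq\ooo\}$ is the support of $\mathbf{z}$ and each $\mathbf{u}^g$ is an extremal generator of $M$ with $\mathbf{u}^g\le\mathbf{z}$ and $u^g_g=z_g$. Since $|D|\le n$, this gives the bound of $n$ summands, and since it writes an arbitrary $\mathbf{z}$ as a combination of extremal generators, it shows at once that they generate $M$. So everything reduces to producing, for each coordinate $g$ in the support, one extremal generator touching $\mathbf{z}$ from below at $g$.

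First I would fix $\mathbf{z}\in M$ and $g\in D$ and look at the set $F_g:=\{\mathbf{y}\in M:\mathbf{y}\le\mathbf{z},\ y_g=z_g\}$, ordered by the coordinatewise order $\le$; it is nonempty since $\mathbf{z}\in F_g$. I would obtain a minimal element $\mathbf{u}^g$ of $F_g$ by Zorn's lemma applied to the reverse order. This is exactly where the hypothesis that $M$ is \emph{closed} enters, and it is the main obstacle: one must check that a maximal chain $C\subseteq F_g$ admits a lower bound in $F_g$. The candidate is the coordinatewise infimum $\mathbf{y}^{*}$, which is the limit of the downward directed net $C$ in the product topology of $(\mathbb{R}\cup\{-\infty\})^n$; each coordinate of $\mathbf{y}^{*}$ lies in $[\ooo,z_j]$ and hence in $\R$, so $\mathbf{y}^{*}\in\R^n$, and closedness of $M$ yields $\mathbf{y}^{*}\in M$. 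Since $y^{*}_g=z_g$ (every element of the chain has $g$-coordinate $z_g$) and $\mathbf{y}^{*}\le\mathbf{z}$, we get $\mathbf{y}^{*}\in F_g$, as required.

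Next I would show that any minimal element $\mathbf{u}^g$ of $F_g$ is an extremal generator. Suppose $\mathbf{u}^g=\mathbf{v}\oplus\mathbf{w}$ with $\mathbf{v},\mathbf{w}\in M$. Then $\mathbf{v},\mathbf{w}\le\mathbf{u}^g\le\mathbf{z}$, and from $z_g=u^g_g=\max(v_g,w_g)$ one of the two summands, say $\mathbf{v}$, satisfies $v_g=z_g$; thus $\mathbf{v}\in F_g$ and $\mathbf{v}\le\mathbf{u}^g$, so minimality forces $\mathbf{v}=\mathbf{u}^g$. Hence $\mathbf{u}^g=\mathbf{v}$, which is precisely extremality. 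Note also that $u^g_g=z_g\neq\ooo$, so $\mathbf{u}^g\neq\ooo$ and it is a genuine generator.

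Finally I would assemble the decomposition. Choosing one minimal $\mathbf{u}^g\in F_g$ for each $g\in D$, each satisfies $\mathbf{u}^g\le\mathbf{z}$, so $\bigoplus_{g\in D}\mathbf{u}^g\le\mathbf{z}$; conversely, for $j\in D$ the term $g=j$ gives $(\bigoplus_{g\in D}\mathbf{u}^g)_j\ge u^j_j=z_j$, while for $j\notin D$ both sides equal $\ooo$. Therefore $\mathbf{z}=\bigoplus_{g\in D}\mathbf{u}^g$ expresses $\mathbf{z}$ as a sum of at most $|D|\le n$ extremal generators (some of the chosen $\mathbf{u}^g$ may coincide, which only helps the count), which establishes both the generation statement and the bound.
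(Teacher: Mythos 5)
Your proof is correct. Note that the paper itself does not prove Theorem~\ref{T:ext_rays}: the statement is imported by citation from \cite{GK} and \cite{BSS}, so there is no in-paper argument to compare against. Your route --- for each coordinate $g$ in the support of $\mathbf{z}$, extracting a minimal element of $F_g=\{\mathbf{y}\in M:\mathbf{y}\le\mathbf{z},\ y_g=z_g\}$ by Zorn's lemma (with closedness of $M$ ensuring that coordinatewise infima of chains remain in $M$), showing that minimality forces extremality, and then summing over the support --- is essentially the standard proof of \cite[Theorem~3.1]{GK}, i.e.\ exactly the argument the paper delegates to the cited reference.
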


\begin{corollary} \label{C1:ext_rays}
Let $M$ be a closed subsemimodule of $\R^n$. Every weakly independent generating family of $M$ is obtained by picking exactly one non-zero element in each extremal ray.
\end{corollary}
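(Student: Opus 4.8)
The plan is to analyze an arbitrary weakly independent generating family $P$ of $M$ and to show that the map sending each element of $P$ to the ray it spans is a bijection onto the set of extremal rays of $M$. Throughout I would use Theorem~\ref{T:ext_rays}, which provides both that the extremal generators of $M$ generate $M$ and that every element of $M$ is a finite sum of extremal generators. I would first record an elementary \emph{absorption lemma}: if ${\mymathbf u}$ is an extremal generator and ${\mymathbf u}=\bigoplus_{i=1}^k a_i {\mymathbf m}_i$ is any linear combination with all $a_i\neq\ooo$, then ${\mymathbf u}=a_i{\mymathbf m}_i$ for a single index $i$. This follows by induction on $k$: writing ${\mymathbf u}=a_1{\mymathbf m}_1\oplus\bigoplus_{i\ge 2}a_i{\mymathbf m}_i$ and applying the defining property (${\mymathbf u}={\mymathbf v}\oplus{\mymathbf w}\Rightarrow{\mymathbf u}={\mymathbf v}$ or ${\mymathbf u}={\mymathbf w}$) peels off one term at a time. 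I would also note two facts used repeatedly: since an empty sum equals $\ooo$, a weakly independent family contains no zero vector; and since every nonzero scalar of $\R$ is invertible, any scalar multiple that appears may be inverted.

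The first substantive step is to show that each extremal ray meets $P$. Let ${\mymathbf u}\neq\ooo$ be an extremal generator. Since $P$ generates $M$, I may write ${\mymathbf u}=\bigoplus_i a_i{\mymathbf p}_i$ with ${\mymathbf p}_i\in P$ and all $a_i\neq\ooo$; the absorption lemma then forces ${\mymathbf u}=a_i{\mymathbf p}_i$ for some $i$, so ${\mymathbf p}_i=a_i^{-1}{\mymathbf u}$ is a nonzero element of $P$ on the ray of ${\mymathbf u}$. The second step is to show that each ${\mymathbf p}\in P$ itself lies on an extremal ray. By Theorem~\ref{T:ext_rays}, ${\mymathbf p}=\bigoplus_k {\mymathbf e}_k$ with each ${\mymathbf e}_k$ an extremal generator, and by the previous step each ${\mymathbf e}_k$ equals $\lambda_k{\mymathbf q}_k$ for some ${\mymathbf q}_k\in P$. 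Thus ${\mymathbf p}=\bigoplus_k\lambda_k{\mymathbf q}_k$ is a linear combination of elements of $P$; weak independence forbids ${\mymathbf p}$ from being a combination of the \emph{other} elements of $P$, so some ${\mymathbf q}_k$ must equal ${\mymathbf p}$. For that index ${\mymathbf e}_k=\lambda_k{\mymathbf p}$ with $\lambda_k\neq\ooo$, whence ${\mymathbf p}=\lambda_k^{-1}{\mymathbf e}_k$ lies on the extremal ray of ${\mymathbf e}_k$. (This ray is unique for ${\mymathbf p}$, since two extremal rays meet only at $\ooo$.)

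The last step rules out two distinct elements of $P$ sharing a ray: if ${\mymathbf p},{\mymathbf p}'\in P$ are distinct with ${\mymathbf p}'=\lambda{\mymathbf p}$ and $\lambda\neq\ooo$, then ${\mymathbf p}'$ is a linear combination of the single other element ${\mymathbf p}$, contradicting weak independence. Combining the three steps, the map sending ${\mymathbf p}\in P$ to the extremal ray it spans is well defined, surjective onto the extremal rays, and injective, so $P$ is exactly a choice of one nonzero element in each extremal ray.

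I expect the main difficulty to lie not in any single estimate but in orchestrating the two halves of Theorem~\ref{T:ext_rays} against weak independence, together with the bookkeeping that guarantees all scalars in play are nonzero (hence invertible) and that $\ooo$ never appears in $P$; the absorption lemma is precisely what keeps these reductions clean. The closedness hypothesis on $M$ is used only through Theorem~\ref{T:ext_rays}.
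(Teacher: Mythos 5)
Your proof is correct, and it takes a genuinely different route from the paper's. The paper proves Corollary~\ref{C1:ext_rays} essentially by citation: it invokes \cite[Theorem~8]{BSS}, which states that a subset $P$ of \emph{scaled} vectors of $M$ is a weakly independent generating set if and only if it equals the set of scaled extremal generators and generates $M$, and then concludes with Theorem~\ref{T:ext_rays}; the normalization to scaled vectors is the device that turns ``one nonzero element per ray'' into an equality of sets. You instead give a self-contained argument from the definitions: your absorption lemma (an extremal generator swallows a single term of any linear combination representing it, by induction, using that every partial sum stays in $M$) yields that every extremal ray meets $P$; Theorem~\ref{T:ext_rays} together with weak independence yields that every element of $P$ lies on an extremal ray; and weak independence again forbids two elements of $P$ from sharing a ray. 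In effect you re-prove the relevant case of the cited result of \cite{BSS}, so your argument is longer but self-contained, and it avoids the scaling normalization altogether; the paper's version is shorter but outsources the key step. Two details you leave implicit, both harmless: in your second step the Minkowski decomposition ${\mymathbf p}=\bigoplus_k {\mymathbf e}_k$ may a priori contain zero terms, which should be discarded (at least one nonzero term remains because $P$ contains no zero vector); and the absorption lemma needs the vectors ${\mymathbf m}_i$ to lie in $M$ --- true in all your applications, since they come from $P\subseteq M$ --- because extremality, as defined in the paper, only constrains decompositions whose summands belong to $M$.
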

\d
By~\cite[Theorem 8]{BSS}, for a given subsemimodule $M$ of $\R^n$, any  subset $P$ of scaled vectors of $M$ is a weakly independent generating set if and only if it is equal to the set of scaled extremal generators and it generates $M$. Here a vector ${\mymathbf v}$ is called scaled if $\|(\exp v_i)_{i=1,\ldots,n}\|=1$ for some fixed norm $\|\cdot\|$. Now the result follows from Theorem~\ref{T:ext_rays}.
\dd

The same condition was obtained previously in the particular case of a finitely generated subsemimodule $M$ of $\R^n$ by Moller~\cite{Mol} and Wagneur~\cite{Wag} (see also~\cite{CGB}). In that case, Corollary~\ref{C1:ext_rays} says that the number of elements of any weakly independent generating family is the number of extremal rays. So if $M$ is a subsemimodule of $\R^n$, then $\wdim(M)$ is equal to the cardinality of any weakly independent generating family.  Example~\ref{ExD1D2} shows that this number may be arbitrary large even for vectors with 3 coordinates. Also there exists infinite weakly independent sets of such vectors.

The following observation, which was made in~\cite{gaubert98n},
emphasizes the analogy with classical convex geometry.
It shows that weak independence satisfies
the anti-exchange axiom of anti-matroids. The latter formalizes
the properties of extreme points and rays of polyhedra.
Since this axiom is valid,
Corollary~\ref{C1:ext_rays} could be recovered as a direct consequence
of the abstract Krein-Milman theorem which is established in~\cite{korte},
at least when the semimodule $M$ is finitely generated.
\begin{prop}[Anti-exchange axiom]
Let $X$ be a finite subset of $\rmax^n$, and let 
 $y, z\in \rmax^n$ be non-zero vectors such that $y,z\not\in \<X>$,
$y,z$ are not proportional in the max-plus sense,
and $y \in \<X\cup \{z\}>$. Then, $z\not\in \<X\cup \{y\}>$.
\end{prop}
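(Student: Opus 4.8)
The plan is to argue by contradiction, turning the two membership hypotheses into a pair of explicit max-plus identities and then extracting a forced proportionality between $y$ and $z$. Suppose $z\in\<X\cup\{y\}>$. Combining this with the hypothesis $y\in\<X\cup\{z\}>$ and collecting the contributions of the vectors of $X$, I can write
\[
y=u\oplus\beta z,\qquad z=w\oplus\delta y,
\]
with $u,w\in\<X>$ and scalars $\beta,\delta\in\rmax$. The first thing I would check is that neither scalar is zero: if $\beta=\ooo$ then $y=u\in\<X>$, against $y\notin\<X>$, and symmetrically $\delta\neq\ooo$. In particular $\beta,\delta$ are finite, so the product $\beta\delta$ (that is, $\beta+\delta$ in the ordinary sense) is a genuine real number.

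Next I would pass to the natural coordinatewise order on $\rmax^n$. Since each sum dominates its summands, the two identities give $y\ge\beta z$ and $z\ge\delta y$; multiplying the second by $\beta$ and chaining yields $y\ge(\beta\delta)\,y$. As $y$ is non-zero it has a coordinate $y_i\neq\ooo$, and on that coordinate $y_i\ge\beta\delta+y_i$ forces $\beta\delta\le\ooi$. Substituting the expression of $z$ into that of $y$ then gives
\[
y=p\oplus(\beta\delta)\,y,\qquad p:=u\oplus\beta w\in\<X> .
\]

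The crux is the dichotomy on $\beta\delta$. If $\beta\delta<\ooi$ strictly, then on every coordinate the term $(\beta\delta)\,y_i$ is strictly dominated by $y_i$ (with the usual convention on the coordinates equal to $\ooo$), so the last identity forces $p=y$; but then $y=p\in\<X>$, a contradiction. Hence $\beta\delta=\ooi$. In this remaining case I multiply $z\ge\delta y$ by $\beta$ and use $\beta\delta=\ooi$ to get $\beta z\ge y$, while $y\ge\beta z$ was already noted; therefore $y=\beta z$, i.e.\ $y$ and $z$ are proportional in the max-plus sense, contradicting the hypothesis. This closes every branch.

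I expect the only delicate point to be the bookkeeping around the zero element $\ooo=-\infty$: one must run the coordinatewise arguments also on the coordinates where $y_i=\ooo$, and use that a non-zero vector has at least one finite coordinate both to derive $\beta\delta\le\ooi$ and to conclude $p=y$ in the strict case. Everything else reduces to routine manipulation of the $\oplus$-identities and of the order relation.
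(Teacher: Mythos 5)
Your proof is correct and follows essentially the same route as the paper's: substitute one membership identity into the other to obtain a fixed-point relation, deduce that the product of the two scalars is at most $\unit$, and then split into the strict case (forcing membership in $\<X>$, a contradiction) and the equality case (forcing proportionality of $y$ and $z$, a contradiction). The only differences are cosmetic: you derive the fixed-point identity for $y$ where the paper derives it for $z$, and you make the $-\infty$-coordinate bookkeeping and the non-vanishing of the scalars explicit, both of which the paper's argument subsumes implicitly.
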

\begin{proof}
Let us assume, by contradiction, that the conditions
of the proposition are satisfied and that $z\in \<X\cup \{y\}>$.
We 
can find $x\in \<X>$ 
and $\lambda\in \rmax$ such that
\[
z= x \oplus \lambda y \enspace .
\]
Since $ y \in \< X\cup  \{z\}>$, a symmetrical property
holds for $y$, namely
\[
y= x' \oplus \mu z \enspace ,
\]
for some $x'\in \<X>$ and $\mu\in \rmax$.
Substituting the latter expression of $y$ in the former equation, 
we get
\[
z=x\oplus \lambda x' \oplus \lambda\mu z \enspace .
\]
This implies that $z\geq \lambda \mu z$, here $a=(a_i)\ge b=(b_i)$ for $a,b\in  \rmax^n$ if and only if $a_i\ge b_i$ in $\Re$ for all $i=1,\ldots ,n$. 
Since the vector $z$ is non-zero, we deduce
that $\unit\geq \lambda \mu$. If the strict inequality
holds, then, we get that $z= x\oplus \lambda x'$,
showing that
$z\in \<X>$ which is a contradiction. Thus, $\lambda\mu=\unit$.
Moreover, $z\geq \lambda y\geq \lambda\mu z=z$,
and so $z=\lambda y$, which contradicts one of the assumptions.
\end{proof}

In a recent paper~\cite{Izh} Z.~Izhakian gave a new definition of linear dependence over $\R$. This definition can be extended in the following way
to the case of an arbitrary semiring.
\begin{definition} \label{D3}
A family ${\mymathbf m}_1,\ldots, {\mymathbf m}_k$, ${\mymathbf m}_i=[m_i^1,\ldots, m_i^n]^t$, $i=1,\ldots,k$, of elements of $\SS^n$ is 
\NEW{tropically linearly dependent (resp.\ independent)} if there exist (resp.\ there does not exist) 
two series of subsets $I_l,J_l\subseteq K:=\{1,\ldots, k\}$, $I_l\cap J_l=\emptyset$, $I_l\cup J_l=K$, $l=1,\ldots,n$, and scalars $\alpha_1,\ldots,\alpha_k\in\SS$, not all equal to $\oo$, such that
$\sum\limits_{i\in I_l} \alpha_i \cdot m_i^l = \sum\limits_{j\in J_l} \alpha_j\cdot m_j^l$ for all $l,\: 1\le l\le n$.
\end{definition}
\begin{re} \label{RIn2} 
A family of vectors which is tropically independent 
is also independent in the Gondran-Minoux sense.
However, the converse may not be true as it is shown in the following example.
\end{re}
\begin{ex}
  Let us consider the three vectors $[-1,\ooi,\ooi]^t$, $[\ooi,-1,\ooi]^t$, $[\ooi,\ooi,-1]^t$ in $\R^3$. These vectors are  linearly independent in the  Gondran-Minoux sense, but they are tropically linearly dependent with the coefficients $(\ooi,\ooi,\ooi)$.
\end{ex}

\begin{re} \label{Ex:no_bases}
An inconvenience  of linear independence in the Gondran-Minoux or tropical senses is that a finitely generated semimodule may not have a generating family
that is linearly independent in either of these senses.
For instance, as it will be shown later in Corollary~\ref{C:no_bases} below,
the subsemimodule $V$ of $\R^3$ 
generated by the Gondran-Minoux dependent vectors ${\mymathbf v_i}:= [i,\ooi,-i]$, already considered in Remark~\ref{re:weaknonincr}, 
contains no linearly independent generating family.
\end{re}

\begin{lemma} \label{LD1D2D3}
Let $M$ be a finitely-generated subsemimodule of $\R^n$.
\begin{enumerate}
\item If there is a generating set of $M$ which is linearly independent in the Gondran-Minoux sense  then its cardinality is the same for any such set and is equal to the cardinality of any generating set which is weakly independent.
\item  If there is a generating set of $M$ which is tropically linearly independent  then its cardinality is the same for any such set and is equal to the cardinality of any generating set which is independent in the Gondran-Minoux sense. Also in this case there is a generating set of $M$ which is weakly linearly independent and item 1 holds.
\end{enumerate}
\end{lemma}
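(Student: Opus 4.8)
The plan is to reduce both items to a single structural fact already recorded above: for a finitely generated subsemimodule $M$ of $\R^n$, every weakly independent generating family of $M$ has the same cardinality, namely $\wdim(M)$ (the number of extremal rays). This is exactly the content of Corollary~\ref{C1:ext_rays} together with the Moller--Wagneur description of weakly independent generating sets, which applies because a finitely generated subsemimodule of $\R^n$ is closed, so that the set of extremal rays is finite. On top of this I need only the two implications established earlier: by Remark~\ref{RIn1} a Gondran--Minoux independent family is weakly independent, and by Remark~\ref{RIn2} a tropically independent family is Gondran--Minoux independent, hence also weakly independent. Thus every notion of independent generating family in sight collapses to the single invariant $\wdim(M)$, and the whole lemma becomes bookkeeping.

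For item~1, I would suppose $M$ has a generating set $G$ that is independent in the Gondran--Minoux sense. By Remark~\ref{RIn1}, $G$ is a weakly independent generating family, so by the structural fact $\card{G}=\wdim(M)$. The same argument applies to an arbitrary Gondran--Minoux independent generating set, which is therefore weakly independent and again has cardinality $\wdim(M)$; this shows all such sets share one cardinality. Since every weakly independent generating family also has cardinality $\wdim(M)$, this common value coincides with the cardinality of any weakly independent generating set, as required.

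For item~2, suppose $M$ has a tropically independent generating set $G$. By Remark~\ref{RIn2}, $G$ is Gondran--Minoux independent, so the hypothesis of item~1 is met and item~1 applies; moreover $G$ is then weakly independent, which furnishes the promised weakly independent generating set. For the two cardinality claims: any tropically independent generating set is, by Remark~\ref{RIn2}, Gondran--Minoux independent, hence by item~1 has cardinality $\wdim(M)$; so all tropically independent generating sets have the same cardinality, equal to that of any Gondran--Minoux independent generating set, which is likewise $\wdim(M)$ by item~1.

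I do not anticipate a genuine obstacle, since no new construction is needed. The only points requiring care are to invoke the implications in the correct direction, tropical $\Rightarrow$ Gondran--Minoux $\Rightarrow$ weak, and to quote the structural fact in its finitely generated (hence closed) form, so that the count of extremal rays is finite and $\wdim(M)$ is a well-defined finite invariant against which every cardinality is compared.
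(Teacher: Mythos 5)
Your proof is correct and follows essentially the same route as the paper: reduce each independence notion to weak independence via the implications tropical $\Rightarrow$ Gondran--Minoux $\Rightarrow$ weak (Remarks~\ref{RIn2} and~\ref{RIn1}), then invoke Corollary~\ref{C1:ext_rays} to conclude that all weakly independent generating families of $M$ have the same cardinality. Your explicit remark that finite generation ensures closedness (so that Corollary~\ref{C1:ext_rays} applies) is a point the paper leaves implicit, but otherwise the arguments coincide.
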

\d
\begin{enumerate}
\item Let $B$ be a generating set of $M$  which is linearly independent in the Gondran-Minoux sense. Then by Remark~\ref{RIn1},  $B$ is weakly linearly independent. By Corollary~\ref{C1:ext_rays} any such $B$ has the same number of elements.
\item Repeats the arguments from the previous item.
\end{enumerate}
\dd

\begin{re}
Note that it is useless to consider analogues of the weak dimension for either tropical, or Gondran-Minoux linear dependence, since by Lemma~\ref{LD1D2D3} such analogues are either infinite (if there is no independent generating family) or coincide with the weak dimension.
\end{re}

\begin{corollary} \label{C:no_bases}
The semimodule $V$ from Remark~\ref{re:weaknonincr} has no generating family which is linearly independent in the Gondran-Minoux sense.
\end{corollary}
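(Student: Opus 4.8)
The plan is to argue by contradiction, combining the rigidity of weakly independent generating families from Corollary~\ref{C1:ext_rays} with the explicit Gondran-Minoux relation already recorded in Example~\ref{ExD1D2}. Suppose, for contradiction, that $V$ admits a generating family $B$ that is linearly independent in the Gondran-Minoux sense. First I would observe that the four vectors $\mathbf{v}_1,\dots,\mathbf{v}_4$ themselves constitute a weakly independent generating family of $V$: they generate $V$ by definition of $V$ (Remark~\ref{re:weaknonincr}), and they are weakly independent by Example~\ref{ExD1D2}. Since $V$ is finitely generated, Corollary~\ref{C1:ext_rays} applies, and it follows that $V$ has exactly four extremal rays, namely the rays $\{\lambda\,\mathbf{v}_i \mid \lambda\in\R\}$ for $i=1,\dots,4$, and that $\wdim(V)=4$.

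Next I would pin down the shape of $B$. Being Gondran-Minoux independent, $B$ is weakly independent by Remark~\ref{RIn1}, and by hypothesis it generates $V$. Hence Corollary~\ref{C1:ext_rays} (with the cardinality also guaranteed by Lemma~\ref{LD1D2D3}(1)) forces $B$ to consist of exactly one nonzero vector chosen from each extremal ray. After reindexing this means
\[
B=\{\lambda_1\odot\mathbf{v}_1,\ \lambda_2\odot\mathbf{v}_2,\ \lambda_3\odot\mathbf{v}_3,\ \lambda_4\odot\mathbf{v}_4\}
\]
for some scalars $\lambda_1,\dots,\lambda_4\in\R\setminus\{\ooo\}$, each of which is invertible in the multiplicative group $\R\setminus\{\ooo\}$.

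Finally I would transport the dependence relation of Example~\ref{ExD1D2} onto $B$ to reach a contradiction. Writing $\mathbf{w}_i=\lambda_i\odot\mathbf{v}_i$ and hence $\mathbf{v}_i=\lambda_i^{-1}\odot\mathbf{w}_i$, substitution into
\[
(-1)\odot\mathbf{v}_1 \oplus \ooi\odot\mathbf{v}_3 = \ooi\odot\mathbf{v}_2 \oplus (-1)\odot\mathbf{v}_4
\]
produces a relation $\bigoplus_{i\in I}\alpha_i\odot\mathbf{w}_i = \bigoplus_{j\in J}\alpha_j\odot\mathbf{w}_j$ with disjoint index sets $I=\{1,3\}$, $J=\{2,4\}$ whose union is $\{1,2,3,4\}$, and with coefficients $\alpha_1=(-1)\odot\lambda_1^{-1}$, $\alpha_3=\lambda_3^{-1}$, $\alpha_2=\lambda_2^{-1}$, $\alpha_4=(-1)\odot\lambda_4^{-1}$, all of which are finite and in particular not all equal to $\ooo$. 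This exhibits $B=\{\mathbf{w}_1,\dots,\mathbf{w}_4\}$ as linearly dependent in the Gondran-Minoux sense, contradicting the choice of $B$. Therefore $V$ has no generating family that is linearly independent in the Gondran-Minoux sense.

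I expect the only delicate point to be the identification of the extremal rays of $V$: one must be certain that the four generators $\mathbf{v}_i$ genuinely represent four distinct extremal rays, so that any weakly independent generating family is forced to be a rescaling of them. This is precisely what the conjunction of weak independence of the $\mathbf{v}_i$ and Corollary~\ref{C1:ext_rays} delivers. The remaining ingredients—invariance of Gondran-Minoux dependence under rescaling of the vectors and the bookkeeping of the coefficients $\alpha_i$—are entirely routine.
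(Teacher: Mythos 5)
Your proof is correct, and it closes the argument by a genuinely different route than the paper. The two proofs share their first half: a Gondran-Minoux independent generating family $B$ of $V$ would be weakly independent (Remark~\ref{RIn1}), so Corollary~\ref{C1:ext_rays} constrains it through the extremal rays of $V$. From there the paper argues purely by cardinality: Lemma~\ref{LD1D2D3} forces $\card{B}=4$, and this contradicts the max-plus Radon theorem (Corollary~\ref{LS}), by which any four vectors of $\R^3$ are Gondran-Minoux dependent. You instead pin $B$ down completely: since the extremal rays of $V$ are exactly the rays of $\mathbf{v}_1,\dots,\mathbf{v}_4$, the family $B$ must consist of rescalings $\lambda_i\odot\mathbf{v}_i$ with $\lambda_i$ invertible, and you then transport the explicit, directly verifiable relation
\[
(-1)\odot\mathbf{v}_1\oplus\mathbf{v}_3=\mathbf{v}_2\oplus(-1)\odot\mathbf{v}_4
\]
of Example~\ref{ExD1D2} through the scalars $\lambda_i^{-1}$ to exhibit a Gondran-Minoux dependence of $B$. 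What your route buys is self-containedness: it avoids the forward reference to Corollary~\ref{LS}, whose proof in Section~\ref{sec-cramer} relies on the Cramer theorem over $\smax$ and the Gondran-Minoux theorem, and needs only the concrete relation together with the (routine) invariance of Gondran-Minoux dependence under rescaling by invertible scalars. What the paper's route buys is brevity and generality: the same cardinality-plus-Radon argument excludes Gondran-Minoux independent generating families for every finitely generated subsemimodule of $\rmax^n$ whose weakly independent generating families have more than $n$ elements, with no need to identify the rays or to exhibit an explicit relation. One point that both you and the paper pass over silently: Corollary~\ref{C1:ext_rays} is stated for \emph{closed} subsemimodules, so strictly one should invoke the standard fact that finitely generated subsemimodules of $\rmax^n$ are closed.
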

\d
From Corollary~\ref{C1:ext_rays} and Example~\ref{ExD1D2}, the cardinality of any generating family of $V$ which is weakly linearly independent  is 4. Thus by Lemma~\ref{LD1D2D3} the cardinality of a generating family which is linearly independent in the Gondran-Minoux sense, if any, should be equal to 4, which contradicts Corollary~\ref{LS} below.
\dd

In order to avoid this difficulty, one may use rather the following different notions of dimension.

\begin{definition}\label{defi-envdim}
Let $V$ be a subset of $\SS^n$, where $\SS$ is a semiring.
For every notion of independence, one can define the \NEW{enveloping dimension} of $V$ with respect to this notion as
$$ \edim(V):=\min\set{\card{P}}{P\; \text{is an independent enveloping family of } V \text{ in } \SS^n}.$$
We shall denote more specifically by
$\edimw (V)$, $\edimgm(V)$ and $\edimtrop(V)$, the enveloping dimension of $V$ with respect to the independence in the weak, Gondran-Minoux, and tropical sense, respectively.
\end{definition}

\begin{re} \label{re-env}
By definition of an enveloping family, we easily see that,
for all independence notions,
$\edim(V)=\edim(\langle V\rangle)$, $\edim(V)\leq n$ (since one can take the canonical
generating family  of $\SS^n$), and $\edim(V)\leq \edim(W)$ when $V\subset W$.
Since
\[\edimw(V)=\min\set{\wdim(M)}{V\subset M,\; M\text{ subsemimodule of } \SS^n}\enspace ,\]
we get from Remark~\ref{rem-weakdim} that
$$ \edimw(V)= \min\set{\card{P}}{P\; \text{is an enveloping family of } V \text{ in } \SS^n}.$$
Hence, when $V$ is a finite set, $\edimw(V)\leq \card{V}$.
Moreover, due to the implications between the independence notions, we have
\[  \edimw(V) \leq \edimgm (V) \leq \edimtrop(V)\enspace .\]
\end{re}

\section{Combinatorial identities in semirings}\label{sec-combinat}

\subsection{Transfer principle}
Many classical combinatorial identities which are valid for matrices over rings (like the Cayley-Ha\-mil\-ton theorem,
the Binet-Cauchy formula, or more difficult results like the
Amitsur-Levitsky identity) turn out to have semiring analogues,
which should be written ``without minus sign''. This idea
was already present in the work of Gondran and Minoux~\cite{GM2},
and it was used systematically by M.~Plus~\cite{Plus}.
Such analogues can be proved by looking for ``bijective proofs''
of these identities, along the lines of Straubing~\cite{straubing} and 
Zeilberger~\cite{zeilberger}. 
Minoux~\cite{minouxmacmahon} gave a semiring
analogue of MacMahon's master theorem. Poplin and Harwig~\cite{poplinhartwig}
gave again combinatorial proofs of several identities. 
However, an elegant observation of Reutenauer and Straubing~\cite{reutstraub}
implies that bijective proofs can be dispensed with,
since one can give a ``one line'' derivation of a valid identity in semirings from the corresponding identity in rings.
This technique, which was applied for instance in~\cite{GBCG} and in~\cite{gaubertburnside} where some semiring analogues of the Binet-Cauchy identity and of the Amitsur-Levitski identity were used, is perhaps not as well known as it should be. Hence, we state here a general transfer principle, building on the idea of Reutenauer and Straubing, and show that previously established identities
follow readily from it.


\begin{definition} A \NEW{positive polynomial expression} in the variables $x_1,\ldots,x_m$ is a formal expression produced by the context-free grammar $E\mapsto E+E,(E)\times(E),0,1,x_1,\ldots,x_m$, where the symbols $0,1,x_1,\ldots,x_m$ are thought of as terminal symbols of the grammar.
\end{definition} 
Thus, $0,1,x_1,\ldots,x_m$ are positive polynomial expressions,
and if $E$ and $F$ are positive polynomial expressions, so are
$E+F$ and $(E)\times (F)$. An example of positive polynomial expression is $E=((1+1+1)\times (1+x_1))\times (x_3)+x_2$. Every positive polynomial expression can 
be interpreted in an arbitrary semiring, by understanding the symbols $0,1,+,\times$ as the neutral elements and structure laws of the semiring. In particular,
we may interpret $E$ over the \NEW{free commutative semiring} $\N[x_1,\ldots, x_m]$ (semiring of commutative formal polynomials in the indeterminates $x_1,\ldots,x_m$, with coefficients in the semiring of natural numbers). We
shall say that a monomial 
$x_1^{\alpha_1}\cdots x_m^{\alpha_m}$
\NEW{appears} in the expression $E$ if there exists a positive integer $c$ such that
$c x_1^{\alpha_1}\cdots x_m^{\alpha_m}$ appears in the expansion
of the polynomial obtained by interpreting $E$ in $\N[x_1,\ldots, x_m]$. The coefficient $c$ is the \NEW{multiplicity} of the monomial.
For instance, the monomials $x_1x_3$, $x_3$, and $x_2$, are the only
ones appearing in the polynomial expression above, their respective multiplicities are $3$, $3$, and $1$. 

%

\begin{definition} 
If $P,Q$ are positive polynomial expressions, we say that the identity
$P=Q$ is valid 
in a semiring $\SS$ if it holds for any substitution $x_1=s_1,\ldots,x_m=s_m$ of $s_1,\ldots, s_m\in \SS$.
\end{definition} 
To show that $P=Q$ holds in every commutative semiring, it suffices
to check that it is valid in the free commutative semiring $\N[x_1,\ldots, x_n])$, the variables $x_1,\ldots,x_m$ of the expression being interpreted
as the indeterminates of the semiring.
\begin{theorem}[\bf Transfer principle, weak form]\label{transfer-w}
Let $P,Q$ be positive polynomial expressions.
If the identity $P=Q$ holds in all commutative rings, then it also holds
in all commutative semirings.
\end{theorem}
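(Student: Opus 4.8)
The plan is to reduce everything to the free commutative semiring $\N[x_1,\ldots,x_m]$ and then exploit the fact that it sits inside the free commutative ring $\Z[x_1,\ldots,x_m]$. As noted in the text immediately preceding the statement, it suffices to verify $P=Q$ in $\N[x_1,\ldots,x_m]$, with the variables interpreted as the indeterminates; the universal property of the free commutative semiring then propagates the identity to every commutative semiring by substitution.

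First I would observe that the coefficient-wise inclusion $\N\hookrightarrow\Z$ induces an injective morphism of commutative semirings $\iota\colon \N[x_1,\ldots,x_m]\hookrightarrow\Z[x_1,\ldots,x_m]$. Indeed, a formal polynomial with coefficients in $\N$ is determined by its family of coefficients, and two such polynomials agree in $\Z[x_1,\ldots,x_m]$ if and only if they agree coefficient by coefficient; since $\N\hookrightarrow\Z$ is injective, $\iota$ is injective as well.

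The key technical step is a naturality lemma: interpreting a positive polynomial expression commutes with semiring morphisms. Precisely, if $\phi\colon\SS\to\SS'$ is a morphism of commutative semirings and $E$ is a positive polynomial expression, then applying $\phi$ to the value of $E$ under a substitution $x_i=s_i$ yields the value of $E$ under the substitution $x_i=\phi(s_i)$. This is proved by a routine structural induction on the grammar $E\mapsto E+E,\,(E)\times(E),\,0,\,1,\,x_1,\ldots,x_m$, using only that $\phi$ preserves $0$, $1$, $+$, and $\times$.

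With these in hand the conclusion is immediate. Interpreting $P$ and $Q$ in $\N[x_1,\ldots,x_m]$ yields polynomials $\bar P,\bar Q$ with coefficients in $\N$. By the naturality lemma applied to $\iota$, the elements $\iota(\bar P)$ and $\iota(\bar Q)$ are exactly the interpretations of $P$ and $Q$ in the commutative ring $\Z[x_1,\ldots,x_m]$; since $P=Q$ is assumed to hold in every commutative ring, these coincide. Injectivity of $\iota$ then forces $\bar P=\bar Q$ in $\N[x_1,\ldots,x_m]$, and the substitution principle finishes the proof. The only point demanding any care is the naturality lemma, but it is a mechanical induction rather than a genuine obstacle; the real content of the argument is simply that \emph{positivity} of $P$ and $Q$ lets us transport the ring identity back across the injection $\iota$ without ever invoking cancellation of negative terms.
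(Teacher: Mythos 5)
Your proof is correct and follows essentially the same route as the paper: reduce to the free commutative semiring $\N[x_1,\ldots,x_m]$, embed it into $\Z[x_1,\ldots,x_m]$, invoke the assumed identity there as a ring identity, and pull the equality back through the injection using positivity, concluding by freeness. The only difference is organizational: the paper never proves the weak form directly but instead proves the stronger variant (Theorem~\ref{transfer-s}), where this same $\N[x_1,\ldots,x_m]\hookrightarrow\Z[x_1,\ldots,x_m]$ mechanism is supplemented by monomial bookkeeping to construct the expression $R$, and the weak form follows as a special case; your direct argument simply strips away that bookkeeping and makes the naturality-of-interpretation step explicit.
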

We shall only prove the following variant, which is stronger.
\begin{theorem}[\bf Transfer principle, strong form] \label{transfer-s}
Let $P^+,P^-,Q^+,Q^-$ be positive polynomial expressions. If the identity $P^++Q^-=P^-+Q^+$ holds
in all commutative rings, and if there is no monomial appearing simultaneously in $Q^+$ and $Q^-$, then there is a positive polynomial expression $R$ such that
the identities
\[
P^+=Q^++R \text{ and } P^-=Q^-+R
\]
hold in all commutative semirings.
\end{theorem}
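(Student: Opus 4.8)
The plan is to reduce everything to a comparison of monomial multiplicities in free objects. As recorded in the paragraph preceding the statement, an identity between positive polynomial expressions holds in all commutative semirings if and only if it holds in the free commutative semiring $\N[x_1,\ldots,x_m]$, by the universal property of the latter; symmetrically, an identity holds in all commutative rings if and only if it holds in the free commutative ring $\Z[x_1,\ldots,x_m]$. Since the grammar defining positive polynomial expressions contains no subtraction, interpreting such an expression $E$ in either $\N[x_1,\ldots,x_m]$ or $\Z[x_1,\ldots,x_m]$ yields the very same polynomial with coefficients in $\N$. For each monomial $\mu$ I would write $[E]_\mu\in\N$ for the multiplicity with which $\mu$ appears in $E$, in the sense already defined in the text.

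Next I would translate both hypothesis and desired conclusion into coefficient form. Read in $\Z[x_1,\ldots,x_m]$, the ring hypothesis $P^++Q^-=P^-+Q^+$ says exactly that, for every monomial $\mu$,
\[
[P^+]_\mu+[Q^-]_\mu=[P^-]_\mu+[Q^+]_\mu,
\]
an equality of natural numbers. Read in $\N[x_1,\ldots,x_m]$, the conclusion asks for an expression $R$ with $[P^+]_\mu=[Q^+]_\mu+[R]_\mu$ and $[P^-]_\mu=[Q^-]_\mu+[R]_\mu$ for all $\mu$. This forces the definition $[R]_\mu:=[P^+]_\mu-[Q^+]_\mu$, and rearranging the coefficient form of the ring identity shows that this common value equals $[P^-]_\mu-[Q^-]_\mu$ as well; hence both required equalities would follow provided only that each $[R]_\mu$ is a non-negative integer.

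The crux is therefore to prove $[R]_\mu=[P^+]_\mu-[Q^+]_\mu\ge 0$ for every $\mu$, and this is precisely where the disjointness hypothesis on $Q^+$ and $Q^-$ enters; I expect this case analysis to be the only genuinely delicate point. If $\mu$ does not appear in $Q^+$, then $[Q^+]_\mu=0$ and $[R]_\mu=[P^+]_\mu\ge 0$ trivially. If $\mu$ does appear in $Q^+$, then by hypothesis it cannot appear in $Q^-$, so $[Q^-]_\mu=0$, and the coefficient identity collapses to $[P^+]_\mu=[P^-]_\mu+[Q^+]_\mu$, whence $[R]_\mu=[P^+]_\mu-[Q^+]_\mu=[P^-]_\mu\ge 0$. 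Thus $[R]_\mu\in\N$ in all cases.

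Finally, since $P^+$ and $P^-$ are finite expressions, only finitely many monomials $\mu$ satisfy $[R]_\mu\neq 0$, so I can realize $R$ as a genuine positive polynomial expression: the sum, over these $\mu$, of $[R]_\mu$ copies of the product expression representing $\mu$ (with $R=0$ if all multiplicities vanish). By construction the two coefficient identities hold in $\N[x_1,\ldots,x_m]$, hence, by the universal property invoked at the outset, in every commutative semiring, which completes the argument. The weak form stated just before then follows by taking $Q^+=Q^-=0$, for which the disjointness hypothesis is vacuous.
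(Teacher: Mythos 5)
Your proof is correct and takes essentially the same approach as the paper's: both pass to the free objects $\Z[x_1,\ldots,x_m]$ and $\N[x_1,\ldots,x_m]$, compare monomial multiplicities, and use the disjointness of $Q^+$ and $Q^-$ to show that the relevant multiplicity differences are non-negative, then realize them as a positive polynomial expression $R$. The only organizational difference is that the paper builds $R$ (from $P^+,Q^+$) and a second expression $S$ (from $P^-,Q^-$) separately and then invokes cancellativity of $\Z[x_1,\ldots,x_m]$ to conclude $R=S$, whereas you check directly from the rearranged coefficient identity that the single $R$ satisfies both conclusions --- a slight streamlining, not a different method.
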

\begin{proof}
Since the identity $P^++Q^-=P^-+Q^+$ holds
in all commutative rings, it holds in particular 
when interpreted in $\Z[x_1,\ldots,x_m]$. 
Hence, the same monomials must appear on both sides of the equality
$P^++Q^-=P^-+Q^+$. Every monomial appearing in $Q^+$ must also appear
in $P^+$ 
with a multiplicity greater or equal to that of $Q^+$, 
otherwise, it would appear in 
$Q^-$, contradicting the assumption. 
We define $R$ to be the
positive polynomial expression equal
to the sum of all the terms $cm$, where either $m$ is a monomial appearing in $P^+$ but not in $Q^+$ and $c$ is its multiplicity in $P^+$, or $m$ is a monomial appearing both in $P^+$ and $Q^+$ and $c$ is the difference of their multiplicities. 
We have $P^+=Q^++R$. This identity, which holds in $\Z[x_1,\ldots,x_m]$,
holds a fortiori in $\N[x_1,\ldots,x_m]$, which is the free
commutative semiring in the indeterminates $x_1,\ldots, x_m$,
and so, it holds in every commutative semiring. 
 A symmetrical argument shows that there
is a positive polynomial expression $S$ such that the identity
$P^-=Q^-+S$ holds in all commutative semirings.
Since $\Z[x_1,\ldots,x_m]$ is cancellative, we deduce
from the identity $Q^++R+Q^-=Q^-+S+Q^+$ valid in $\Z[x_1,\ldots,x_m]$
that the identity $R=S$ is still valid in $\Z[x_1,\ldots,x_m]$.
Hence, the identities $P^+=Q^++R$ and $P^-=Q^-+R$ hold in all
semirings.
\end{proof}
The proof of the previous principle may look tautological, however,
we next show that this principle yields (by a direct application) several identities some of which have been proved in the literature by involved combinatorial arguments.

\subsection{Determinantal identities}
Gondran and Minoux~\cite{GM2}
introduced the following general notion of bideterminant, which
applies to matrices with entries in an arbitrary semiring.
\begin{definition} \label{Ddet}
The \NEW{bideterminant} of $A=[a_{ij}]\in\M(\SS)$ is the pair $(|A|^+, | A| ^-)$, where
\e
\label{E2}
| A| ^+=\sum\limits_{\sigma\in \permeven_n} (a_{1\sigma(1)}\cdots  a_{n\sigma(n)})  , \quad
| A| ^-=\sum\limits_{\sigma\in \allperm_n \setminus \permeven_n} (a_{1\sigma(1)}\cdots  a_{n\sigma(n)} ) 
\ee
where 
$\allperm_n$ is the permutation group on $n$ elements and
$\permeven_n\subset \allperm_n$ is the subgroup of the even permutations.
\end{definition}

\begin{example}[Multiplicativity for the determinant] 
We consider the multiplicativity of the determinant, i.e. the identity $\detp{AB}=\detp{A}\detp{B}$, where $\detp{A}$ denotes the determinant of $A$,
 which is valid for $n\times n$ matrices $A$, $B$ with entries
in a commutative ring. 
In the introduced notations it can be re-written via the positive polynomial expressions in these variables as follows:
$$|AB|^+ -|AB|^-= |A|^+ |B|^+ + |A|^- |B|^- -|A|^+ |B|^- - |A|^- |B|^+,$$
or
\begin{align}
|AB|^+ + |A|^+ |B|^- + |A|^- |B|^+ =|AB| ^- + |A|^+ |B|^+ + |A|^- |B|^-
\enspace .\label{eq-weakmult}
\end{align}

1. The weak transfer principle shows that the previous
identity is valid in any commutative semiring.

2. The strong transfer principle shows that there is an element
$s$ of the semiring (which is given by a positive
polynomial expression in the entries of $A$ and $B$) such that:
\begin{subequations}\label{e-smul}
\begin{gather}
|AB|^+  = |A|^+ |B|^+ + |A|^-  |B|^- + s \label{e-smul1}\\
|AB|^- = |A|^+ |B|^- + |A|^-  |B|^+  + s\label{e-smul2}
\end{gather}
\end{subequations}
\end{example}

\begin{example}[Binet-Cauchy formula] \label{ex-binet-chauchy}
 We denote by $Q_{r,k}$ the set of all ordered sequences $(i_1,\ldots, i_r)$, where $1\le i_1 <i_2<\ldots < i_r\le k$. If $\alpha = (i_1,\ldots, i_r)\in Q_{r,k}$, $\beta=(j_1,\ldots,j_s)\in Q_{s,l}$, $X\in {\cal M}_{k,l}(\SS)$, where $\SS$ is a semiring, then $X[\alpha\vert \beta]$ denotes the $r\times s$ submatrix of $X$ located in the intersection of the rows numbered $i_1,\ldots, i_r$ and columns numbered $j_1,\ldots, j_s$.

Let $\RR$ be a commutative ring, $A\in {\cal M}_{n,p}(\RR)$, $B \in {\cal M}_{p,m}(\RR)$, $C:=AB \in {\cal M}_{n,m}(\RR)$. The Binet-Cauchy theorem states that for any $r$, $1\le r\le \min\{ n,m,p\}$ and for any $\alpha\in Q_{r,n}$, $\beta\in Q_{r,m}$ the following formula holds:
$$ \detp{C[\alpha\vert \beta]} = \sum\limits_{\omega \in Q_{r,p}} \detp{A[\alpha\vert \omega]}\detp {B[\omega\vert\beta]}.$$

Using the weak transfer principle, we get that for matrices  $A\in {\cal M}_{n,p}({\cal S})$, $B \in {\cal M}_{p,m}({\cal S})$, $C:=AB \in {\cal M}_{n,m}({\cal S})$ over an arbitrary commutative semiring $\SS$ the following identity holds:
\begin{gather*}
 |(C[\alpha\vert \beta])|^+  +  \sum\limits_{\omega \in Q_{r,p}} ( |A[\alpha\vert \omega]|^+B[\omega\vert\beta]|^- + |A[\alpha\vert \omega]|^-B[\omega\vert\beta]|^+)\\
\quad =|(C[\alpha\vert \beta])|^- + \sum\limits_{\omega \in Q_{r,p}} (|A[\alpha\vert \omega]|^+B[\omega\vert\beta]|^++ |A[\alpha\vert \omega]|^-B[\omega\vert\beta]|^-).\end{gather*}
The strong transfer principle implies that there exists an element $s\in {\cal S}$
such that
\begin{align*}
 |(C[\alpha\vert \beta])|^+  &=   \sum\limits_{\omega \in Q_{r,p}} (|A[\alpha\vert \omega]|^+B[\omega\vert\beta]|^++ |A[\alpha\vert \omega]|^-B[\omega\vert\beta]|^-) +s ,\\
 |(C[\alpha\vert \beta])|^-  &=\sum\limits_{\omega \in Q_{r,p}} ( |A[\alpha\vert \omega]|^+B[\omega\vert\beta]|^- + |A[\alpha\vert \omega]|^-B[\omega\vert\beta]|^+) +s.\end{align*}

The latter identity was stated by Butkovi\v{c}, Cuninghame-Green and Gaubert \cite{GBCG}, the former was stated by Poplin and Hartwig, see~\cite[Theorem 5.4]{poplinhartwig}. 
\end{example}
\begin{example}[Cramer identity]\label{ex-cramer}
If $A$ is a $n\times n$ matrix with entries in a commutative semiring, we denote
by $A(i,j)$ the $(n-1)\times (n-1)$ submatrix in which row $i$ and column $j$
have been suppressed. Define
the {\em positive adjoint matrix} of $A$ to be the $n\times n$ matrix $A\adjp$ with $(i,j)$-entry:
\[
(A\adjp)_{ij}:= \begin{cases} |A(j,i)|^+ & \text{ if $i+j$ is even}\\
|A(j,i)|^- & \text{ if $i+j$ is odd.}\end{cases}
\]
The {\em negative adjoint matrix} $A\adjm$ is defined similarly, by exchanging
the parity condition on $i+j$. When the entries of $A$ belong to a commutative ring, the classical adjoint matrix $A\adj$ is such that $A\adj=A\adjp-A\adjm$,
and we have
\[
|A| I = AA\adj = A\adj A \enspace ,
\]
where $I$ is the identity matrix.
The weak transfer principle applied to the first identity
shows that
\[
|A|^+I + AA\adjm = |A|^-I+AA\adjp 
\enspace .
\]
The strong transfer principle implies that there exists a matrix $R$
such that
\[
AA\adjp =|A|^+I + R,\qquad AA\adjm =|A|^-I + R \enspace ,
\]
a result which was used by Reutenauer and Straubing~\cite[Lemma3]{reutstraub}.
\end{example}

\begin{example}[Cayley-Hamilton formula] 
Let $A$ be a $n\times n$ matrix.
For $1\leq k\leq n$, let $\Lambda^k_\pm(A)$ denote the $k$-th positive
or negative compound matrix of $A$, which is the 
$\binom{n}{k}\times \binom{n}{k}$ matrix
indexed by the nonempty subsets $I,J$ of $k$ elements of $\{1,\ldots,n\}$ such
that $(\Lambda^k_\pm(A))_{IJ}:=|A[I\vert J]|^\pm$. The classical $k$-th compound
matrix is $\Lambda^k(A):=\Lambda^k_+(A)-\Lambda^k_-(A)$.
The characteristic polynomial
of $A$ can be written as $P_A(y)=\detp{A-y I}=(-y)^n+ \sum_{1\leq k\leq n}
(-y)^{n-k}\operatorname{tr}\Lambda^k(A)$ where $\operatorname{tr}$ denotes
the trace of a matrix. The Cayley-Hamilton theorem
shows that the identity
\[
(-A)^n+ \sum_{1\leq k\leq n} (-A)^{n-k}\operatorname{tr}\Lambda^k(A) =0 
\]
is valid in any commutative ring. Hence, the weak transfer principle
shows that the identity
\begin{gather}
A^n+ 
\sum_{1\leq k\leq n\atop k\text{ even } } A^{n-k}\operatorname{tr}(\Lambda_+^k(A)) 
+ 
\sum_{1\leq k\leq n\atop k\text{ odd } } A^{n-k}\operatorname{tr}(\Lambda_-^k(A)) 
\nonumber\\
=
\sum_{1\leq k\leq n\atop k\text{ even } } A^{n-k}\operatorname{tr}(\Lambda_-^k(A)) 
+ 
\sum_{1\leq k\leq n\atop k\text{ odd } } A^{n-k}\operatorname{tr}(\Lambda_+^k(A)) 
\label{e-CH}
\end{gather}
holds in any commutative semiring.
This result was first proved combinatorially by Straubing~\cite{straubing}.
\end{example}
The semiring version of the Cayley-Hamilton theorem is weaker
than the ring version, however, it still has useful consequences,
as in the following application.
We say that a sequence $s_0,s_1,\ldots$ of elements
of a semiring is {\em linear recurrent} with a representation of dimension $n$
if $s_k=cA^kb$ for all $k$, where $c,A,b$ are matrices with entries
in the semiring, of respective sizes $1\times n$, $n\times n$, and $n\times 1$.
Left and right multiplying the identity~\eqref{e-CH} by $cA^p$ and $b$,
respectively, we see that
\begin{gather*}
s_{n+p}
+
\sum_{1\leq k\leq n\atop k\text{ even } } s_{n+p-k}\operatorname{tr}(\Lambda_+^k(A)) 
+ 
\sum_{1\leq k\leq n\atop k\text{ odd } } s_{n+p-k}\operatorname{tr}(\Lambda_-^k(A)) \\
=
\sum_{1\leq k\leq n\atop k\text{ even } } s_{n+p-k}\operatorname{tr}(\Lambda_-^k(A)) 
+ 
\sum_{1\leq k\leq n\atop k\text{ odd } } s_{n+p-k}\operatorname{tr}(\Lambda_+^k(A)) .
\end{gather*}
Hence, an immediate induction shows that {\em a linear recurrent sequence with a representation of dimension $n$ is identically zero as soon as its first $n$ coefficients are zero}.
\subsection{Polynomial identities for matrices}

In the following three examples we give the semiring versions of three classical results in PI-theory.

We note that the matrix algebra over a field is a PI-algebra, i.e., it satisfies non-trivial polynomial identities, since it is finite-dimensional.

\begin{example}[Amitsur-Levitzki's identity] 
The famous Amitsur-Levitzki
theorem states that the minimal (by the degree) polynomial identity for
the algebra of $n\times n$ matrices
over any commutative ring is 
$$S_{2n}(x_1,\dots,x_{2n}) =0,$$
where, for all $n$, $S_n$ denotes the polynomial
\begin{equation}\label{defsym}
S_n(x_1,\dots,x_{n}):=\sum_{ \sigma\in 
\allperm_{n}}\sgn(\sigma) x_{\sigma(1)}\cdots x_{\sigma(n)},
\end{equation}
and, for any permutation $\sigma$,  $\sgn(\sigma)$ denotes its signature.

Hence, weak transfer principle provides that the equality
$$\sum_{ \sigma\in \permeven_{2n}} x_{\sigma(1)}\cdots x_{\sigma(2n)} =\sum_{ \sigma\in \allperm_{2n}\setminus \permeven_{2n}}x_{\sigma(1)}\cdots x_{\sigma(2n)}$$
is a polynomial identity for matrices over any commutative semiring. The semiring version of Amitsur-Levitzki
theorem  was firstly stated and proved in \cite[Lemma 7.1]{gaubertburnside}, where it was used for the positive solution of the Burnside problem for semigroups of matrices over a class of commutative idempotent semirings. 

Also, the strong transfer principle implies that for any subset $S'\subseteq \allperm_{2n}$ there exists a matrix polynomial $R=R(S')$ such that
$$\sum_{ \sigma\in \permeven_{2n}\cap S'} x_{\sigma(1)}\cdots x_{\sigma(2n)} =\sum_{ \sigma\in \allperm_{2n}\setminus (\permeven_{2n}\cup S')}x_{\sigma(1)}\cdots x_{\sigma(2n)}+R$$
and 
$$\sum_{ \sigma\in (\allperm_{2n}\setminus \permeven_{2n})\cap S'} x_{\sigma(1)}\cdots x_{\sigma(2n)} =\sum_{ \sigma\in \permeven_{2n}\setminus S'} x_{\sigma(1)}\cdots x_{\sigma(2n)} +R$$
hold, namely we can take
$$Q_+=\sum_{ \sigma\in \allperm_{2n}\setminus (\permeven_{2n}\cup S')}x_{\sigma(1)}\cdots x_{\sigma(2n)}\mbox{ and } Q_-=\sum_{ \sigma\in \permeven_{2n}\setminus S'} x_{\sigma(1)}\cdots x_{\sigma(2n)},
$$
since for any $S'$ the set $(\allperm_{2n}\setminus (\permeven_{2n}\cup S'))\cap (\permeven_{2n}\setminus S')=\emptyset$, i.e., there is no monomials appearing simultaneously in $Q^+$ and $Q^-$.
\end{example}

\begin{example}[Capelly identity] The identity
$$K_n(x_1,\ldots, x_n,y_1,\ldots,y_{n+1}) := \sum\limits_{\sigma\in \allperm_n} (-1)^\sigma y_1x_{\sigma(1)}y_2x_{\sigma(2)} \cdots y_n x_{\sigma(n)}y_{n+1} =0$$ 
holds for matrices over any commutative ring.

Hence, the weak transfer principle implies that the identity
$$ \sum\limits_{\sigma\in \permeven_n}  y_1x_{\sigma(1)}y_2x_{\sigma(2)} \cdots y_n x_{\sigma(n)}y_{n+1} = \sum\limits_{\sigma\in \allperm_n\setminus \permeven_n}  y_1x_{\sigma(1)}y_2x_{\sigma(2)} \cdots y_n x_{\sigma(n)}y_{n+1}$$ 
holds in any commutative semiring.

The strong transfer principle gives that there exists a matrix polynomial $R$ such that
\begin{gather*}
\sum\limits_{\sigma\in S' \cap \permeven_n}   y_1x_{\sigma(1)}y_2x_{\sigma(2)} \cdots y_n x_{\sigma(n)}y_{n+1} \\
= \sum\limits_{\sigma\in \allperm_{n}\setminus (\permeven_{n}\cup S')} y_1x_{\sigma(1)}y_2x_{\sigma(2)} \cdots y_n x_{\sigma(n)}y_{n+1}+R
\end{gather*}
and \begin{gather*}
 \sum\limits_{\sigma\in (\allperm_{n}\setminus \permeven_{n})\cap S'}  y_1x_{\sigma(1)}y_2x_{\sigma(2)} \cdots y_n x_{\sigma(n)}y_{n+1} \\
= \sum\limits_{\sigma\in \permeven_{n}\setminus S'}  y_1x_{\sigma(1)}y_2x_{\sigma(2)} \cdots y_n x_{\sigma(n)}y_{n+1}+R
\end{gather*}
are polynomial identities for any subset $S'\subseteq \allperm_n$, here $R$ depends on $S'$. 
\end{example}
\begin{example}[Identity of algebraicity] 
The identity
$$
{\cal A}(y,z):=S_{n^2}([y^{n^2},z],\ldots,[y,z])=0
$$
where the polynomial $S$ is as in~\eqref{defsym} and $[y,z]:=yz-zy$,
holds for matrices over any commutative ring.

Hence, the weak transfer principle implies that the identity ${\cal A}^+(y,z)={\cal A}^-(y,z)$ holds in any commutative semiring. Here ${\cal A}^+(y,z)$ denotes the sum of monomials of ${\cal A}(y,z)$ which go with the positive sign, and ${\cal A}^-(y,z)$ denotes the sum of monomials of ${\cal A}(y,z)$ which go with the sign ``$-$''.
The strong transfer principle is not applicable here since cancellations appear in ${\cal A}$, so the condition that there are no equal monomials may not be satisfied.
\end{example}

\section{Semirings with a symmetry}\label{sec4}

\begin{definition}
A map $\tau:\SS\to \SS$ is a symmetry if $\tau$ is a left and right $\SS$-semimodule homomorphism from $\SS$ to $\SS$ of order 2, i.e.,
\begin{subequations}
\begin{align}
& \tau(a+b)=\tau(a)+\tau(b) \\ 
& \tau(0)=0\\ 
& \tau(a\cdot b)=a\cdot\tau(b)= \tau(a)\cdot b \\
& \tau(\tau(a))=a .
\end{align}
\end{subequations}
\end{definition}
\begin{ex}
A trivial example of symmetry is $\tau(a)=a$.
Of course, in a ring, we may take $\tau(a)=-a$. 
\end{ex}
In the sequel,
in a general semiring with symmetry, we will write $-a$ instead of $\tau(a)$,
and $a-b$ for $a+(-b)=a+\tau(b)$, understanding that $a-a=a+\tau(a)$ may be different from zero.
Also we may use the notation $+a$ instead of $a$. 

\begin{definition}\label{def-acirc}
A map $f:\SS\to\SS'$ between semirings with symmetry is a \NEW{morphism of semirings with symmetry} if $f$ is a morphism of semirings such that $f(-s)=-f(s)$ for all $s\in\SS$.
\end{definition}

\begin{definition}
For any $a\in \SS$, we set $a^\circ:=a-a$,
so that $-a^\circ=a^\circ=(-a)^\circ$, and we denote
\[
\SS^\circ := \{a^\circ \mid a\in \SS\}  ,\qquad  \SS^\vee:=(\SS\setminus \SS^\circ )\cup\{\oo\}\enspace .
\]
\end{definition}
\begin{re}
The set $\SS^\circ$ is a left and right ideal of $\SS$. 
\end{re}
\begin{definition}
We define the \NEW{balance} relation $\balance$ on $\SS$, by 
\[ a \balance b\iff a-b\in \SS^\circ \enspace .
\]
\end{definition}
\begin{re}
The relation $\balance$ is reflexive and symmetric, but we shall see
in Section~\ref{S5.1} that it may not be transitive.
\end{re}
Observe that
\[
a-b\balance c \iff a\balance b+c \enspace .
\]
\begin{definition}
We introduce the following relation:
\[
a\succeq^\circ b \iff a=b+c \text{ for some } c\in \SS^\circ \enspace .
\]
\end{definition}
\begin{re}
This relation is reflexive and transitive. It may not be 
antisymmetric, see Example~\ref{ex_not_antisym} below.
\end{re}
\begin{re} If $a=b+c$ with $c\in\SS^\circ$ then $a-b=b^\circ+c\in\SS^\circ$, 
hence 
\begin{align}\label{succeq-bal}
 a\succeq^\circ b \text{ or } b\succeq^\circ a \Rightarrow a\balance b
\enspace.
\end{align}
The converse is false in general.
Indeed, let $\SS$ be the semiring $\R^2$ with the entrywise laws,
and the symmetry $\tau(a)=a$. Then $\SS^\circ=\SS$, hence $a\balance b$ holds
for all $a,b\in\SS$, whereas $a\succeq^\circ b$ is equivalent to $a\geq b$.
Since $(1,2)$ and $(2,1)$ are not comparable in $\SS$, this contradicts
the converse implication in~\eqref{succeq-bal}.
\end{re}

We shall also apply the notation $\balance$ and $\succeq^\circ$ to matrices
and vectors, understanding that the relation holds entrywise. 

\begin{ex}\label{semiring-couples}
Let $\SS$ denote an arbitrary semiring. An interesting semiring with symmetry is the set of couples $\SS^2$ equipped with
the laws:
\begin{gather*}
(x',x'')+ (y',y'') = (x'+ y', x''+ y''),\\
(x',x'')\cdot  (y',y'') = (x'\cdot  y' + x''\cdot  y'', x'\cdot  y''+ x''\cdot  y'),\\
-(x',x'')=(x'',x') \enspace .
\end{gather*}
 The zero and unit of $\SS^2$ are $(0,0)$ and $(1,0)$.
The map $x'\mapsto (x',0)$ is an embedding from $\SS$ to $\SS^2$,
which allow us to write $x'$ or $+x'$ instead of $(x',0)$,
$-x''$ instead of $(0,x'')$, and
$x'-x''$ instead of $(x',x'')$. 
Let us define the \NEW{modulus}, $\mymod{x}$, of an element $x=(x',x'')\in \SS^2$ 
to be $\mymod{x}:=x' + x''$.
Then, $x^\circ=(\mymod{x},\mymod{x})$ for all $x\in\SS^2$ and  the map $x\mapsto \mymod{x}$ is a surjective morphism from $\SS^2$ to $\SS$. Hence 
the elements of $(\SS^2)^\circ$ are the couples of the form $(x',x')$ 
with $x'\in\SS$, and we have $(x',x'')\balance (y',y'')\Leftrightarrow 
x'+y''=x''+y'$.
Finally, if $\SS$ is already a semiring with symmetry, the map
$\pi:\SS^2\to\SS$ such that $\pi((x',x''))=x'-x''$ is a surjective morphism of semirings with symmetry.
\end{ex}

Now we can give an example showing that $\succeq^\circ$ is not anti-symmetric.
\begin{ex} \label{ex_not_antisym}
Let $\SS=\Z$ be the ring of integers. We consider $\SS^2$ with 
the same laws as in the previous example. Then
$$ (1,2)\succeq^\circ (0,1)$$
since $(1,2)=(0,1)+(1,1)$
and $$ (0,1)\succeq^\circ (1,2)$$
since $(0,1)=(1,2)+(-1,-1)$,
however, $(0,1)\ne (1,2)$.
\end{ex}

Other examples of semirings with symmetry
shall be given in the next section. 

\begin{re} \label{re-comb-id}
The combinatorial identities of Section~\ref{sec-combinat} 
can be rewritten in a more familiar way by working in the semiring 
with symmetry $\SS^2$ defined in Example~\ref{semiring-couples},
that is by identifying $\SS$ as a subsemiring of $\SS^2$.\end{re}

\bigskip

In particular, the above notations allow us to define the determinant of
matrices as follows.

\begin{definition} \label{de:detS}
Let $\SS$ be a semiring with symmetry and $A=[a_{ij}]\in\M(\SS)$.
We define the determinant $\detp{A}$ of $A$ to be the element
of $\SS$ defined by the usual formula 
$$\sum_{\sigma\in \allperm_n} \sgn(\sigma) a_{1\sigma(1)}\cdots a_{n\sigma(n)} ,$$ 
understanding that $\sgn(\sigma)=\pm 1$ depending on the even or odd parity of $\sigma$. 
\end{definition}
\begin{re}
With this definition, we have that $\detp{A}=|A|^+-|A|^-$,
where each of $|A|^+$ and $|A|^-$ are as in Definition~\ref{Ddet}.
\end{re}
\begin{re}\label{rk-def-perm}
If the symmetry of the semiring $\SS$ is the identity map, i.e., if $-a:=a$,
then, the determinant $\detp{A}$ coincides with the permanent
of $A$:
\e
\label{E1}
\per(A):=\sum\limits_{\sigma\in \allperm_n} a_{1\sigma(1)}\cdots  a_{n\sigma(n)}   
\ee 
where $\allperm_n$ is the permutation group on the set $\{1,\ldots,n\}$.
\end{re}

\begin{re}\label{def-dett}
Identifying any semiring $\SS$ (not necessarily with symmetry) as a subsemiring of $\SS^2$, we may define the determinant of any square matrix $A$ with entries in $\SS$ as its determinant as a matrix with values in $\SS^2$. This quantity that we shall denote by $\dett{A}$ is nothing but the bideterminant of $A$, that is $(|A|^+,|A|^-)$ of $\SS^2$, where each of $|A|^+$ and $|A|^-$ are as in Definition~\ref{Ddet}. If $\SS$ is a semiring with symmetry, $\dett{A}$ does not coincide in general with $\detp{A}$ since $|A|^-$ is in general different from $0$.
\end{re}

The results of the previous section can be reformulated in the following way.
\begin{corollary}
Let $\SS$ be an arbitrary semiring, 
$A,B\in \M(\SS)$, and $\dett{\cdot}$ be defined as in Remark~\ref{def-dett}.
The weak form of the multiplicative property of the determinant~\eqref{eq-weakmult} can be rewritten
equivalently as
\begin{align}\label{e-mult3}
\dett{AB}\balance \dett{A} \dett{B} \enspace .
\end{align}
The strong form~\eqref{e-smul} yields
\begin{align}\label{e-mult4}
\dett{AB}\succeq^\circ \dett{A} \dett{B} \enspace .
\end{align}
\end{corollary}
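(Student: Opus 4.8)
The plan is to show that the two assertions of the corollary are nothing but restatements of identities~\eqref{eq-weakmult} and~\eqref{e-smul} once the latter are read inside the semiring with symmetry $\SS^2$ of Example~\ref{semiring-couples}, via the identification $\dett{A}=(|A|^+,|A|^-)$ recorded in Remark~\ref{def-dett}. The whole argument is therefore a matter of translating the balance relation $\balance$ and the relation $\succeq^\circ$ on $\SS^2$ into conditions on the pairs of positive expressions $|A|^\pm$, using the explicit description of $(\SS^2)^\circ$ and of $\balance$ given in Example~\ref{semiring-couples}.

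First I would recall from Example~\ref{semiring-couples} that for $x=(x',x'')$ and $y=(y',y'')$ in $\SS^2$ one has $(x',x'')\balance(y',y'')\Leftrightarrow x'+y''=x''+y'$, and that $(\SS^2)^\circ$ consists exactly of the diagonal couples $(c,c)$. Next I would compute the two sides of~\eqref{e-mult3} as elements of $\SS^2$: by multiplicativity of $\dett{\cdot}$ under the laws of $\SS^2$, the product $\dett{A}\dett{B}=(|A|^+,|A|^-)\cdot(|B|^+,|B|^-)$ equals $\bigl(|A|^+|B|^++|A|^-|B|^-,\ |A|^+|B|^-+|A|^-|B|^+\bigr)$, while $\dett{AB}=(|AB|^+,|AB|^-)$. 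Applying the criterion for $\balance$, the relation $\dett{AB}\balance\dett{A}\dett{B}$ is then literally the equality
\[
|AB|^+ + |A|^+|B|^- + |A|^-|B|^+ = |AB|^- + |A|^+|B|^+ + |A|^-|B|^-,
\]
which is exactly~\eqref{eq-weakmult}; this proves the first assertion.

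For the second assertion I would unfold $\succeq^\circ$ in $\SS^2$. By definition $x\succeq^\circ y$ means $x=y+(c,c)$ for some $c\in\SS$, i.e.\ $x'=y'+c$ and $x''=y''+c$ simultaneously. Taking $x=\dett{AB}$ and $y=\dett{A}\dett{B}$ with the componentwise description above, the existence of such a $c$ is precisely the assertion that there is an element $s\in\SS$ with
\[
|AB|^+ = |A|^+|B|^+ + |A|^-|B|^- + s,\qquad |AB|^- = |A|^+|B|^- + |A|^-|B|^+ + s,
\]
which is the content of the strong form~\eqref{e-smul} (with $s$ the element furnished there by the strong transfer principle). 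Hence $\dett{AB}\succeq^\circ\dett{A}\dett{B}$, giving~\eqref{e-mult4}.

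The argument is essentially a bookkeeping of definitions, so there is no deep obstacle; the only point that needs care is to verify that the product in $\SS^2$ really reproduces the cross-term pattern of the bideterminant, i.e.\ that $\dett{\cdot}$ is multiplicative as a determinant over the commutative semiring $\SS^2$. This is where one must invoke that the combinatorial identities of Section~\ref{sec-combinat} hold over the commutative semiring $\SS^2$ and that, under the identification of $\SS$ as a subsemiring of $\SS^2$, the pair $(|A|^+,|A|^-)$ is genuinely the determinant $\dett{A}$ in the sense of Remark~\ref{def-dett}; once this is granted, matching components against the definitions of $\balance$ and $\succeq^\circ$ is immediate.
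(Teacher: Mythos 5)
Your proof is correct and follows exactly the route the paper intends: the corollary is stated there without a separate proof precisely because it amounts to the translation you carry out, namely unfolding the relations $\balance$ and $\succeq^\circ$ in $\SS^2$ (using the explicit descriptions of $(\SS^2)^\circ$ and of balance from Example~\ref{semiring-couples}) and matching the components against the identities~\eqref{eq-weakmult} and~\eqref{e-smul}. One small wording caveat: no ``multiplicativity of $\dett{\cdot}$'' is needed or available --- the cross-term formula for $\dett{A}\,\dett{B}$ is nothing but the product law of $\SS^2$ applied to the pairs $(|A|^+,|A|^-)$ and $(|B|^+,|B|^-)$, and indeed the whole point of the corollary is that $\dett{\cdot}$ is \emph{not} multiplicative on the nose but only up to $\balance$ or $\succeq^\circ$.
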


\begin{corollary}
The Cayley-Hamilton theorem can be
rewritten as $P_A(A)\balance 0$ for $A\in \M(\SS)$,
where $P_A(y)=\dett{A-yI}$.
\end{corollary}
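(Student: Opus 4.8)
The plan is to reformulate the balance $P_A(A)\balance 0$ as an equality between two positive polynomial expressions, exactly as was done for the multiplicativity of the determinant, and then to recognize that this equality is nothing but the semiring Cayley--Hamilton identity~\eqref{e-CH} already established through the transfer principle. Throughout, I identify $\SS$ with the subsemiring $\{(s,0)\}$ of $\SS^2$ and compute $\dett{\cdot}$ in $\SS^2$ as in Remark~\ref{def-dett}.

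First I would make the coefficients of the characteristic polynomial explicit over $\SS^2$. Working in $\SS^2[y]$ and expanding $\dett{A-yI}$ by the Leibniz formula, each diagonal factor is $a_{ii}-y=(a_{ii},y)$, a genuine two-coordinate element carrying no cancellation. Expanding the products and regrouping the resulting monomials according to the set $S$ of indices at which the factor $a_{ii}$ (rather than $-y$) has been selected yields, term by term,
\begin{equation*}
\dett{A-yI}=(-y)^n+\sum_{1\le k\le n}(-y)^{n-k}\operatorname{tr}\Lambda^k(A)\enspace ,
\end{equation*}
with $\operatorname{tr}\Lambda^k(A)=\operatorname{tr}\Lambda^k_+(A)-\operatorname{tr}\Lambda^k_-(A)$ in $\SS^2$. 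The point to stress is that this regrouping is an honest equality of the coefficients of the polynomial $\dett{A-yI}$, not merely a balance: choosing the $n-k$ diagonal entries carrying the factor $-y$ and letting the remaining indices range over the permutations of the complementary $k$-element set $S$ is a bijective reindexing of the monomials of the Leibniz expansion, so no subtraction is ever used.

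Substituting the matrix $A$ for $y$ (the coefficients being scalars, the substitution is unambiguous) then gives, with the convention $\operatorname{tr}\Lambda^0(A)=\oi$,
\[
P_A(A)=\sum_{0\le k\le n}(-A)^{n-k}\operatorname{tr}\Lambda^k(A)
\]
as a matrix over $\SS^2$. Since $A$ has entries in the copy of $\SS$, the powers $A^{n-k}$ and the compound traces $\operatorname{tr}\Lambda^k_\pm(A)$ all lie in $\SS$, while $(-A)^{n-k}=(A^{n-k},0)$ or $(0,A^{n-k})$ according as $n-k$ is even or odd. Reading off the two coordinates of each term with the multiplication rule of $\SS^2$, the positive part of $P_A(A)$ collects $A^{n-k}\operatorname{tr}\Lambda^k_+(A)$ for $n-k$ even and $A^{n-k}\operatorname{tr}\Lambda^k_-(A)$ for $n-k$ odd, and the negative part collects the complementary terms.

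It remains to observe that $P_A(A)\balance 0$ means precisely, by the description of $(\SS^2)^\circ$ in Example~\ref{semiring-couples}, that these two parts coincide entrywise; this equality is exactly the identity~\eqref{e-CH} (when $n$ is odd the roles of the two parities are interchanged, but this merely exchanges the two sides, and $\balance$ is symmetric). Since~\eqref{e-CH} holds in every commutative semiring, the corollary follows. The only genuinely delicate points are the cancellation-free character of the coefficient expansion in the second step and the parity bookkeeping in the third; everything else is a direct transcription of the argument used for the multiplicativity corollary.
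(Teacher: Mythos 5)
Your proof is correct and follows essentially the route the paper intends (the corollary is left without explicit proof there): interpret everything in $\SS^2$ as in Remark~\ref{re-comb-id}, observe that $P_A(A)\balance 0$ says the positive and negative parts of $P_A(A)$ coincide, and recognize this equality as the identity~\eqref{e-CH} obtained from the weak transfer principle. Your added observation that the expansion of $\dett{A-yI}$ into compound traces is a genuine equality in $\SS^2[y]$ (by bijective reindexing, with no cancellation), rather than a mere balance, is exactly the detail needed to make the reduction rigorous, since $\balance$ is not transitive.
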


More generally, if $\SS$ is a semiring with symmetry, every
combinatorial identity can be expressed in $\SS$ in the usual form,
by replacing the equality by the $\balance$ or the $\succeq^\circ$ symbol.
For instance,
the relations~\eqref{e-mult3} and~\eqref{e-mult4} hold not only 
for matrices with entries in $\SS$ viewed as matrices with entries in $\SS^2$
but also in an arbitrary semiring with symmetry $\SS$, replacing 
the determinant function $\dett{\cdot}$ in $\SS^2$ by the determinant
function $\detp{\cdot}$ of Definition~\ref{de:detS} in $\SS$.

Indeed, let us say that $P$ is a \NEW{polynomial expression} if it is the 
formal difference $P=P^+-P^-$ of positive polynomial expressions, 
and interpret it in an arbitrary semiring with symmetry by understanding the symbol $-$ as the symmetry of the semiring.
Then, considering in any ring the symmetry $a\mapsto -a$ where $-a$ is the opposite of $a$ for the additive law,  Theorems~\ref{transfer-w} and \ref{transfer-s} can be rewritten in the following equivalent manner.

\begin{theorem}[\bf Transfer principle, weak form]\label{transfer-w-2}
Let $P$ and $Q$ be polynomial expressions.
If the identity $P=Q$ holds in all commutative rings, then 
the identity $P\balance Q$ holds in all commutative semirings with symmetry.
\end{theorem}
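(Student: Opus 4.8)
The plan is to reduce Theorem~\ref{transfer-w-2} directly to the already-established weak transfer principle, Theorem~\ref{transfer-w}, by unfolding the meaning of the balance relation $\balance$ in the semiring-with-symmetry $\SS$ and reinterpreting everything in terms of the positive-expression formulation. Recall that a polynomial expression in the new sense is a formal difference $P=P^+-P^-$ of positive polynomial expressions, and that in a semiring with symmetry the symbol $-$ is read as the symmetry $\tau$. So the hypothesis ``$P=Q$ holds in all commutative rings'' means $P^+-P^-=Q^+-Q^-$ holds in every commutative ring, where $-$ is the ordinary additive inverse; since rings are cancellative, this is literally equivalent to the positive-expression identity
\begin{equation*}
P^++Q^-=P^-+Q^+
\end{equation*}
holding in all commutative rings.

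Next I would invoke Theorem~\ref{transfer-w} (the already-proved weak form) applied to the two positive polynomial expressions $P^++Q^-$ and $P^-+Q^+$: since their equality holds in every commutative ring, it holds in every commutative semiring. Now let $\SS$ be an arbitrary commutative semiring with symmetry and fix any substitution of its elements for the variables. Interpreting the positive expressions in $\SS$ (forgetting the symmetry for the moment, using only the underlying semiring structure) gives the genuine equality
\begin{equation*}
P^++Q^-=P^-+Q^+ \quad\text{in }\SS.
\end{equation*}
This is the key bridge: the weak transfer principle supplies an honest equality among the four positive expressions, with no minus signs involved.

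It then remains to translate this equality into the balance statement $P\balance Q$. By definition, $P\balance Q$ means $P-Q\in\SS^\circ$, i.e.\ $(P^+-P^-)-(Q^+-Q^-)\in\SS^\circ$, which using the additivity of $\tau$ and the definition $a^\circ=a-a$ I would rewrite as $(P^++Q^-)-(P^-+Q^+)\in\SS^\circ$. Writing $s:=P^++Q^-=P^-+Q^+$ for the common value just obtained, the left-hand side is exactly $s-s=s^\circ\in\SS^\circ$, so $P\balance Q$ holds. Since the substitution was arbitrary, the identity $P\balance Q$ holds in $\SS$, and since $\SS$ was an arbitrary commutative semiring with symmetry, the theorem follows.

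I do not expect any serious obstacle here; the content is entirely in correctly unwinding the definitions and in the observation that the ring hypothesis, being cancellative, is equivalent to a sign-free identity to which the previously proved Theorem~\ref{transfer-w} applies verbatim. The only point requiring a little care is the bookkeeping in the final step: one must use the linearity properties of the symmetry ($\tau(a+b)=\tau(a)+\tau(b)$, $\tau(\tau(a))=a$) to justify regrouping $(P^+-P^-)-(Q^+-Q^-)$ as $(P^++Q^-)-(P^-+Q^+)$, and then recognize the resulting expression as $s-s$ for a single element $s\in\SS$, so that it lies in $\SS^\circ$ by the very definition of $\SS^\circ$.
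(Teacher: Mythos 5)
Your proof is correct and follows essentially the same route as the paper, which presents Theorem~\ref{transfer-w-2} as an equivalent rewriting of Theorem~\ref{transfer-w} (reading $-$ as the additive inverse in rings and as the symmetry in semirings). Your argument simply makes that asserted equivalence explicit: cancellativity in rings turns the hypothesis into the sign-free identity $P^++Q^-=P^-+Q^+$, Theorem~\ref{transfer-w} transfers it to all commutative semirings, and the symmetry axioms let you recognize $P-Q$ as $s-s=s^\circ\in\SS^\circ$, which is exactly $P\balance Q$.
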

\begin{theorem}[\bf Transfer principle, strong form] \label{transfer-s-2}
Let $P$ and $Q$ be polynomial expressions. 
If the identity $P=Q$ holds in all commutative rings, and if $Q=Q^+-Q^-$
for some positive polynomial expressions such that there is no monomial appearing simultaneously in $Q^+$ and $Q^-$, then the identity
\[ P \succeq^\circ Q \]
holds in all commutative semirings with symmetry.
\end{theorem}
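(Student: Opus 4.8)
The plan is to deduce Theorem~\ref{transfer-s-2} from the already-established strong transfer principle for positive polynomial expressions, namely Theorem~\ref{transfer-s}, by unwinding the notational bridge provided by Remark~\ref{def-dett} and the relation $\succeq^\circ$. The essential point is that a ``polynomial expression'' in the sense used here is a formal difference $P=P^+-P^-$ of positive polynomial expressions, and the symbol $-$ is interpreted as the semiring symmetry $\tau$. So I must translate the hypothesis $P=Q$ (an identity in all commutative rings) into a statement about the four positive expressions $P^+,P^-,Q^+,Q^-$, apply Theorem~\ref{transfer-s} to obtain a witness $R$, and then repackage the two resulting positive identities into a single $\succeq^\circ$ relation.

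First I would write $P=P^+-P^-$ and $Q=Q^+-Q^-$. In any commutative ring, interpreting $-$ as the additive inverse, the hypothesis $P=Q$ becomes $P^+-P^-=Q^+-Q^-$, which rearranges to the genuinely positive (minus-free) identity $P^++Q^-=P^-+Q^+$, valid in all commutative rings. This is exactly the hypothesis of Theorem~\ref{transfer-s}, and the stated side condition --- no monomial appears simultaneously in $Q^+$ and $Q^-$ --- is precisely the non-overlap hypothesis required there. Hence Theorem~\ref{transfer-s} produces a positive polynomial expression $R$ such that the identities $P^+=Q^++R$ and $P^-=Q^-+R$ hold in all commutative semirings.

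Next I would interpret these two identities in an arbitrary commutative semiring with symmetry $\SS$, reading $P^\pm,Q^\pm,R$ as genuine elements of $\SS$ obtained by substituting elements of $\SS$ for the variables. Then $P=P^+-P^-=(Q^++R)-(Q^-+R)=(Q^+-Q^-)+(R-R)=Q+R^\circ$, using additivity and the definition $R^\circ=R-R\in\SS^\circ$. By the very definition of $\succeq^\circ$, the existence of an element $c:=R^\circ\in\SS^\circ$ with $P=Q+c$ is exactly the statement $P\succeq^\circ Q$. Since the semiring was arbitrary, the relation $P\succeq^\circ Q$ holds in all commutative semirings with symmetry, as claimed.

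I do not anticipate a genuine obstacle, since this is a reformulation rather than a fresh argument; the one point requiring care is bookkeeping with the symmetry. The manipulation $(Q^++R)-(Q^-+R)=(Q^+-Q^-)+(R-R)$ must be justified using only the semimodule-homomorphism axioms of $\tau$ (additivity and $\tau\circ\tau=\mathrm{id}$), not cancellation, which is unavailable in a semiring; in particular one may not simplify $R-R$ to $0$. The subtle step is therefore recognizing that the ``leftover'' term is not zero but rather the balanced element $R^\circ\in\SS^\circ$, and that this is precisely what the relation $\succeq^\circ$ is designed to absorb. Once this is observed, the proof reduces to a single line, mirroring the one-line style already used for the multiplicativity example in~\eqref{e-mult4}.
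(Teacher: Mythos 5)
Your proposal is correct and follows exactly the route the paper intends: the paper states Theorem~\ref{transfer-s-2} as an equivalent rewriting of Theorem~\ref{transfer-s}, and your argument---decomposing $P=P^+-P^-$, $Q=Q^+-Q^-$, translating the ring identity into $P^++Q^-=P^-+Q^+$, invoking Theorem~\ref{transfer-s} to obtain $R$, and observing that the leftover term $R-R=R^\circ$ lies in $\SS^\circ$ so that $P=Q+R^\circ$ gives $P\succeq^\circ Q$---is precisely the unwinding the paper leaves implicit. Your care in noting that $R-R$ cannot be cancelled to $\zero$ but is instead absorbed by the definition of $\succeq^\circ$ is exactly the right point of attention.
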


\section{Extensions of the max-plus semiring}
We next present two related extensions of the max-plus semiring.
The first one, the \NEW{symmetrized max-plus semiring},  was introduced by M.~Plus~\cite{Plus} (see also~\cite{BCOQ92}). 
The second one was introduced by Izhakian~\cite{Izh05,Izh} to study 
linear independence in the tropical sense. We shall see 
that both semirings can be obtained by a more general construction,
which encompasses other interesting examples of semirings
like the semiring of ``jets'' used in~\cite{finkelstein} and~\cite{Bapat}.
\begin{prop}
Let $(\SS,+,\cdot)$ be a semiring. Then the set $\SS\times \R$ endowed 
with the operations
$$
(a,b)\oplus(a',b')=\left\{\begin{array}{lcl}
(a+a',b) & \text{ if } & b=b' \\
(a,b) & \text{ if } & b>b' \\
(a',b') & \text{ if } & b<b' \end{array} \right.
$$
and 
$$
(a,b)\odot (a',b')=(a \cdot a',b\odot  b')
$$
is a semiring, with zero $(\oo,\ooo)$ and unit $(\oi,\ooi)$.
If $\SS$ is a zero-sum free semiring without zero divisors, then
the set:
$$\SS\R:=(\SS\setminus \{\oo\})\times (\R\setminus\{\ooo\}) \cup 
\{(\oo,\ooo)\}$$
is a subsemiring of $\SS\times \R$.
\end{prop}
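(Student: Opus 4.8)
The plan is to verify the semiring axioms directly for $(\SS\times\R,\oplus,\odot)$, and then to check that the announced subset $\SS\R$ is closed under both operations. The operation $\odot$ is manifestly associative and distributive with respect to $\odot$ on each coordinate, since it is just the coordinatewise product using $\cdot$ on $\SS$ and $\odot$ on $\R$; the only genuine work lies in the exotic addition $\oplus$, whose definition is a case split on the comparison of the second coordinates $b,b'\in\R$.

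First I would establish that $\oplus$ is commutative: swapping $(a,b)$ and $(a',b')$ swaps the roles of the strict cases $b>b'$ and $b<b'$, and in the tie case $b=b'$ it replaces $a+a'$ by $a'+a$, which agree since $\SS$ is commutative under addition. Next I would check that $(\oo,\ooo)$ is neutral: for any $(a,b)$ with $(a,b)\neq(\oo,\ooo)$ we have $b>\ooo=-\infty$ unless $b=\ooo$, so the second defining case gives $(a,b)\oplus(\oo,\ooo)=(a,b)$, while $(\oo,\ooo)\oplus(\oo,\ooo)=(\oo+\oo,\ooo)=(\oo,\ooo)$ by the tie case. The main verification is associativity of $\oplus$. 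The natural way to organize this is to note that the second coordinate of a sum is always $\max(b,b')$, exactly as in $\R$, so the second coordinate of any iterated sum of $(a_1,b_1),\dots,(a_k,b_k)$ is $\max_i b_i$ regardless of the bracketing. For the first coordinate one checks that the sum retains $\sum_{i:\,b_i=\max_j b_j} a_i$, i.e. the $\SS$-sum of the first coordinates over those indices achieving the maximal second coordinate. I would phrase associativity as the assertion that for three summands both bracketings yield $(\,\sum_{i:\,b_i=m} a_i,\ m\,)$ with $m:=\max(b_1,b_2,b_3)$, splitting into the cases according to how many of $b_1,b_2,b_3$ equal $m$ and using associativity and commutativity of $+$ in $\SS$ to combine the surviving first coordinates. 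This is the step I expect to require the most care, since several subcases must be matched, but it is entirely mechanical once the ``surviving support'' description is in hand.

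Having verified that $(\SS\times\R,\oplus,\odot)$ is a semiring, I would confirm the distributivity of $\odot$ over $\oplus$: multiplying $(a'',b'')$ into $(a,b)\oplus(a',b')$ preserves the comparison between $b$ and $b'$ because $\odot$ in $\R$ is order-preserving (it is ordinary addition of reals, and multiplication by $-\infty$ sends everything to $\ooo$, which is handled by the tie or neutral cases), and on the first coordinate it distributes because $\cdot$ distributes over $+$ in $\SS$. The annihilation property $(\oo,\ooo)\odot(a,b)=(\oo,\ooo)$ follows from $\oo\cdot a=\oo$ in $\SS$ and $\ooo\odot b=\ooo$ in $\R$, and $(\oi,\ooi)$ is a multiplicative unit coordinatewise.

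Finally I would treat the subsemiring claim under the hypothesis that $\SS$ is zero-sum free and has no zero divisors. Closure of $\SS\R$ under $\odot$ is immediate: if $a,a'\neq\oo$ then $a\cdot a'\neq\oo$ (no zero divisors) and if $b,b'\neq\ooo$ then $b\odot b'\neq\ooo$, while any product involving $(\oo,\ooo)$ returns $(\oo,\ooo)\in\SS\R$. For closure under $\oplus$, take two nonzero elements $(a,b),(a',b')$ of $\SS\R$, so $a,a'\neq\oo$ and $b,b'\neq\ooo$. In the strict cases $b>b'$ or $b<b'$ the result is one of the two summands, hence again in $\SS\R$; in the tie case $b=b'\neq\ooo$ the result is $(a+a',b)$, and here the zero-sum freeness of $\SS$ guarantees $a+a'\neq\oo$ (since $a,a'\neq\oo$), so the first coordinate is nonzero while $b\neq\ooo$, placing the sum in $\SS\R$. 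Sums involving $(\oo,\ooo)$ are absorbed by neutrality. This last step is exactly where the two hypotheses on $\SS$ are used, and it is the reason the restricted set $\SS\R$ — rather than the full product — is the natural carrier of the algebraic structure we want.
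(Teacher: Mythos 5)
The paper states this proposition without proof, so there is no argument of the authors to compare yours against; judged on its own merits, your verification breaks down at the distributivity step, and the break is not repairable. You assert that multiplying $(a'',b'')$ into $(a,b)\oplus(a',b')$ preserves the case split on the second coordinates, with multiplication by $\ooo$ ``handled by the tie or neutral cases''. It is not: if $b''=\ooo$ while $a''\neq\oo$ (such mixed elements belong to $\SS\times\R$, though not to $\SS\R$), a strict comparison $b>b'$ collapses after multiplication to the tie $\ooo=\ooo$, and the tie case then \emph{adds} the two first coordinates instead of keeping only the surviving one. Concretely, for any semiring $\SS$ with $\oo\neq\oi$, take $x=(\oi,\ooo)$, $y=(\oo,b)$, $z=(\oi,b')$ with real numbers $b>b'$. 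Then
\begin{align*}
x\odot(y\oplus z)&=x\odot y=(\oo,\ooo)\enspace,\\
(x\odot y)\oplus(x\odot z)&=(\oo,\ooo)\oplus(\oi,\ooo)=(\oo+\oi,\ooo)=(\oi,\ooo)\enspace,
\end{align*}
and these differ. Hence distributivity fails in $\SS\times\R$ for every nontrivial $\SS$, so the first assertion of the proposition, read literally for the full product, cannot be proved at all; your parenthetical dismissal is exactly the point where any attempted proof must fail. (Your other verifications --- commutativity of $\oplus$, neutrality of $(\oo,\ooo)$, associativity of $\oplus$ via the ``surviving support'' description, associativity of $\odot$, the unit, and annihilation --- are sound for the full product; distributivity is the unique failing axiom, and multipliers of the form $(a,\ooo)$ with $a\neq\oo$ are the unique culprits.)

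What your work does establish, once reorganized, is the statement that actually matters for the rest of the paper (every extension used later, namely $\smax\cong\BB\R$, $\Ti=\N_2\R$, and the jets, is of this restricted form): the set $\SS\R$, equipped with the same operations, is itself a semiring. Your closure argument is correct and is precisely where the two hypotheses enter (zero-sum freeness for the tie case of $\oplus$, absence of zero divisors for $\odot$); and once all elements are drawn from $\SS\R$, your distributivity argument becomes valid, because in $\SS\R$ the only element with second coordinate $\ooo$ is the zero $(\oo,\ooo)$, which is covered by annihilation, while a real $b''$ preserves strict inequalities and ties on second coordinates. So the repair is to verify the axioms with all elements taken in $\SS\R$, and to read the proposition's phrase ``subsemiring of $\SS\times\R$'' as ``subset closed under the operations which is a semiring in its own right'', discarding the literal claim about $\SS\times\R$.
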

We shall denote by $\zero$ and $\unit$, instead of $(\oo,\ooo)$ and
$(\oi,\ooi)$, the zero and unit of $\SS\times \rmax$.
\begin{re}\label{extension-of-rmax}
Let us define the \NEW{modulus} of $x=(a,b)\in \SS\times \R$ by $\mymod{x}:=b$.
It is clear that the modulus map $x\mapsto \mymod{x}$ is a surjective morphism from 
$\SS\times \R$ to $\R$. Moreover,
the maps $a\mapsto (a,\ooo)$ and $a\mapsto (a,\ooi)$
are embeddings from $\SS$ to $\SS\times \R$, and the map $b\mapsto (\oo,b)$ is
 an embedding from $\R$ to $\SS\times \R$. 
When  $\SS$ is zero-sum free without zero divisors, the modulus map
is also a surjective morphism from  $\SS\R$ to $\R$,
and the  map 
$\SS\to\SS\R$ which sends $a\in\SS\setminus \{\oo\}$ to $(a,\ooi)$ and
$\oo$ to $\zero$ is an embedding.
However, $\R$ is not necessarily embedded in $\SS\R$, because
the natural injection which sends $b\in\R\setminus  \{\ooo\}$ to $(\oi,b)$
and $\ooo$  to $\zero$ is not a morphism unless $\SS$ is
idempotent. 
If $\SS$ is a semiring with symmetry, then the map
$(a,b)\mapsto (-a,b)$ is a symmetry on $\SS\times \R$ or $\SS\R$,
and we have $(\SS\times\R)^\circ=\SS^\circ\times\R$ and 
$(\SS\R)^\circ=(\SS^\circ\setminus \{\oo\})\times (\R\setminus\{\ooo\}) \cup 
\{\zero\}$.
We shall call $\SS\R$ an \NEW{extension} of $\R$.
\end{re}

We next show that several important semirings can be obtained as extensions of $\R$.


\subsection{The symmetrized max-plus semiring}\label{S5.1}
The symmetrized max-plus semiring, which is useful to deal with 
systems of linear equations over $\R$, was introduced in~\cite{Plus}. We recall here some definitions and results from~\cite{Plus}, 
and show that this semiring can also be obtained by the general construction of the previous section.

Consider the semiring with symmetry $\R^2$ defined as 
in Example~\ref{semiring-couples}, using the notations $\oplus$, $\odot$ and $\ominus$ instead of $+$, $\cdot$ and $-$ (for instance $\ominus (x',x'')=(x'',x')$) and let us use the notations
$a^\circ$, $\mymod{\cdot}$, $\balance$ and $\succeq^\circ$ as in Section~\ref{sec4}.
The classical way to obtain the ring of integers $\Z$ 
is by a ``symmetrization'' of the semiring of nonnegative integers 
$\N$, which is obtained by quotienting the semiring with symmetry
$\N^2$ by the relation $\balance$. The same cannot be done
when replacing $\N$ by the max-plus semiring $\rmax$,
because the relation $\balance$ is not transitive in $\rmax^2$.
Indeed, $(\ooi,1)\balance (1,1)$ and $(1,1)\balance (1,\ooi)$, but $(\ooi,1)\notbalance (1,\ooi)$.

Instead of $\balance$, we shall consider the following thinner relation.
\begin{definition} 
The relation ${\cal R}$ on $\R^2$ is defined by:
$$ (x',x'')\: {\cal R} \: (y',y'')  \Longleftrightarrow \left\{ \begin{array}{l} x'\ne x'', y'\ne y'' \text{ and } x'\oplus y'' = x''\oplus y'   \\ \text{or} \\ x'=x''=y'=y'' \end{array} \right.$$
\end{definition}
\begin{definition} 
With a given $a\in\R^2$ we associate the following subset in~$\R^2$:
$$\operatorname{Sol}(a):=\{x\in \R^2\vert x\balance a\}\enspace.$$
\end{definition}
\begin{re}
It can be checked that
\[ a{\cal R} b \iff \operatorname{Sol}(a)=\operatorname{Sol}(b)
\enspace .
\]
\end{re}
It follows that ${\cal R}$ is an equivalence relation on $\R^2$.
The relation ${\cal R}$ is compatible with the relations
or operations $\balance$, $\succeq^\circ$, $\mymod{\cdot }$, $a\mapsto a^\circ$, $\ominus$, $\oplus$, and $\odot$ on $\R^2$. 

Therefore the following quotient semiring can be considered:
\begin{definition}[{\cite{Plus}}]\label{def-sym-mplus}
The \NEW{symmetrized max-plus semiring} is $\S:=\R^2/{\cal R}$. 
We denote the induced operations on $\S$ by the same notations as in
$\R^2$: $\oplus, \odot , \balance$, etc.
\end{definition}

The elements of $\S$ are the classes 
$\overline{(t,\ooo)}=\set{(t,x'')}{x''<t}$, 
$\overline{(\ooo,t)}=\set{(x',t)}{\\ x' < t}$, and
 $\overline{(t,t)}=\{(t,t)\}$,  for $t\in\R\setminus\{\ooo\}$, and 
the class $\overline{(\ooo,\ooo)}$. 

%
\begin{definition}
Let $\overline x=\overline{(x',x'')}\in \S$. Then $\overline x$ is called \NEW{sign-positive} (resp.\ \NEW{sign-negative}) if either $x'>x''$ (resp.\ $x''>x'$) or $x'=x''=\ooo$ for a representation of the class $\overline x$. The element $\overline x\in \S$ is called \NEW{signed} if it is either sign-negative or sign-positive, $\overline x$ is called \NEW{balanced} if $x'=x''$ for any representation of the class $\overline x$, otherwise it is called \NEW{unbalanced}. 
\end{definition} 
The sets of sign-positive, sign-negative and balanced elements are denoted respectively by $\smax^\oplus$, $\smax^\ominus$, and $\smax^{\circ}$. Note that $\S=\smax^\oplus \cup \smax^\ominus \cup \smax^\circ$. We denote $\smax^\vee=\smax^\oplus \cup \smax^\ominus$. The notation $\smax^\circ$ and $\smax^\vee$ is justified, because
$(\smax)^\circ=\set{a^\circ}{a\in \smax}$, as in Definition~\ref{def-acirc},
coincides with $\smax^\circ$.

The following result is immediate.
\begin{lemma}
The embedding $t\mapsto \overline{(t,\ooo)}$ sends $\R$ to $\smax^\oplus$.
Moreover, $\smax^\ominus=\ominus(\smax^\oplus)$ and
$\smax^\circ=(\smax^\oplus)^\circ$. 
\qed
\end{lemma}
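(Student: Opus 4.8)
The plan is to verify each of the three claimed statements directly from the explicit description of the classes of $\S$ given just above the lemma, using the definitions of sign-positive, sign-negative, and balanced elements together with the symmetry $\ominus(x',x'')=(x'',x')$ and the operation $a^\circ:=a-a=a\oplus(\ominus a)$. The entire proof is a matter of unwinding definitions on the three families of classes $\overline{(t,\ooo)}$, $\overline{(\ooo,t)}$, $\overline{(t,t)}$ (for $t\in\R\setminus\{\ooo\}$) and the class $\overline{(\ooo,\ooo)}$, so no genuine obstacle is expected; the work is purely bookkeeping.

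First I would treat the embedding $t\mapsto\overline{(t,\ooo)}$. For $t\in\R\setminus\{\ooo\}$ the representative $(t,\ooo)$ has first coordinate strictly greater than the second (since $\ooo$ is the least element of $\R$), so by definition $\overline{(t,\ooo)}$ is sign-positive; and for $t=\ooo$ the class is $\overline{(\ooo,\ooo)}$, which is sign-positive by the clause $x'=x''=\ooo$. Thus the image lands in $\smax^\oplus$. Conversely every sign-positive class has a representative with $x'>x''$ or $x'=x''=\ooo$; in the first case it equals $\overline{(x',\ooo)}$ by the description of the classes (a sign-positive unbalanced class is exactly $\overline{(t,\ooo)}$ with $t=x'$), and in the second it is $\overline{(\ooo,\ooo)}$, so the embedding is onto $\smax^\oplus$.

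Next I would show $\smax^\ominus=\ominus(\smax^\oplus)$. Applying $\ominus$ to a representative swaps its two coordinates, so $\ominus\overline{(x',x'')}=\overline{(x'',x')}$; this turns the condition $x'>x''$ into $x''>x'$ and fixes the diagonal representative $(\ooo,\ooo)$. Hence $\ominus$ maps sign-positive classes to sign-negative classes and, being an involution, the image is exactly $\smax^\ominus$.

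Finally, for $\smax^\circ=(\smax^\oplus)^\circ$ I would compute $a^\circ$ for a sign-positive $a=\overline{(t,\ooo)}$: one has $a^\circ=a\oplus(\ominus a)=\overline{(t,\ooo)}\oplus\overline{(\ooo,t)}$, and since both moduli equal $t$ the rule for $\oplus$ in the equal-modulus case gives $\overline{(t\oplus\ooo,\ooo\oplus t)}=\overline{(t,t)}$, which is balanced; the case $t=\ooo$ gives $\overline{(\ooo,\ooo)}$. This shows $(\smax^\oplus)^\circ\subseteq\smax^\circ$, and conversely every balanced class $\overline{(t,t)}$ (and $\overline{(\ooo,\ooo)}$) arises this way as $(\overline{(t,\ooo)})^\circ$. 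The coincidence of this set with $(\smax)^\circ$ as defined in Definition~\ref{def-acirc} was already noted before the lemma, so the identification is complete.
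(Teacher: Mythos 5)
Your proof is correct and matches the paper's treatment: the paper declares this lemma immediate (the statement carries its own \qed and no proof is given), and your direct unwinding of the definitions of sign-positive, sign-negative, and balanced elements over the four families of classes $\overline{(t,\ooo)}$, $\overline{(\ooo,t)}$, $\overline{(t,t)}$, $\overline{(\ooo,\ooo)}$ is precisely the routine verification being left to the reader. The only slip is one of phrasing: when computing $(\overline{(t,\ooo)})^\circ$ you invoke ``the rule for $\oplus$ in the equal-modulus case,'' which belongs to the extension presentation $\BB\R$, whereas in $\smax=\R^2/{\cal R}$ the sum is the entrywise one of Example~\ref{semiring-couples} --- both give $\overline{(t,t)}$, so nothing breaks.
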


Thus we can identify sign-positive elements of $\S$ with the elements of $\R$, and we can write sign-negative (resp.\ balanced) elements of $\S$ as $\ominus x$
(resp.\ $x^\circ$) with $x\in \R$. We will do this without further notice if it will not lead to a misunderstanding. So, $x^\circ =\overline{(x,\ooo)}\:^\circ=\overline{(x,x)}$. In these notations the subtractivity rules in $\S$ look as follows:
$$ \begin{array}{ll}
a \ominus b =a & \mbox{ if } a>b \\
a \ominus b =\ominus b & \mbox{ if } a<b \\
a\ominus a =a^\circ
\end{array}
$$

The Boolean semiring $\B$ is the subsemiring of $\R$ composed of the 
neutral elements $\ooo$ and $\ooi$.
Since the relation ${\cal R}$ is trivial on $\B^2$, the quotient of $\B^2$ over this relation does not glue anything, and in appropriate notations we have the following.

\begin{definition}\label{def-symb} The \NEW{symmetrized Boolean semiring} is the 
subsemiring $\BB:=\{\ooo, \ooi,\ominus \ooi, \ooi^\circ\}$ of $\S$:

\begin{center}
\setlength{\unitlength}{0.6pt}%
\begin{picture}(100,110)
\put(45,102){$\ooi^\circ$}
\put(45,100){\line (-1,-1){38}}
\put(55,100){\line (1,-1){38}}
\put(1,48){$\ooi$}
\put(90,48){$\ominus \ooi$}
\put(5,43){\line (1,-1){38}}
\put(95,43){\line (-1,-1){38}}
\put(45,0){$\ooo$}
\end{picture}
\end{center}
\end{definition}

It is straightforward to see that the extension $\BB\R$
is isomorphic to $\S$ by the map $\BB\R\to\S$,
$(\ooo,\ooo)\mapsto\ooo=\overline{(\ooo,\ooo)}$,
$(\ooi,t)\mapsto t=\overline{(t,\ooo)}$, $(\ominus \ooi,t)\mapsto 
\ominus t=\overline{(\ooo,t)}$, $(\ooi^\circ,t)\mapsto t^\circ=\overline{(t,t)}$,
for $t\in\R\setminus\{\ooo\}$. Moreover this map is an isomorphism of semirings with symmetry, hence the operations and relation  $\ominus$,
$a\mapsto a^\circ$, $\balance$ and $\succeq^\circ$ are identical in both
representations. Finally, the modulus
maps on $\S$ and $\BB\R$ coincide.

\subsection{Izhakian's extension of the max-plus semiring} 

\begin{definition}
Let $\N_{q}$ denote the semiring which is the quotient of the semiring $\N$ of nonnegative integers by the equivalence relation, which identifies $q$ with $q+1$, $q+2$, $\ldots$. For example $2+k=\ldots=2+1=2$ in~$\N_2$
for all $k\ge 0$.
\end{definition}

\begin{definition}
The \NEW{extended tropical semiring} is the extension $\Ti:=\N_2\R$
of $\R$.
\end{definition}
This semiring structure encodes whether the maximum is attained at least two
times in an expression.
\begin{re}
In~\cite{Izh05,Izh}, Izhakian introduced the extended tropical semiring 
by equipping the set $\Re\cup \Ub\cup\{-\infty\}$, 
where $\Ub$ is another copy of $\Re$, with laws $\oplus,\odot$
defined by explicit formula, according to the membership
of the arguments of the laws to one of the three sets $\Re$, $\Ub$, and $\{-\infty\}$.
The elements of $\Ub$ are denoted by 
$a^{\nu}$ with $a\in\Re$. For instance, $2\oplus 3^\nu=3^\nu$, $3\oplus 3=3^\nu$, and $2^\nu\oplus 3=3$. 
One can check that the map $\Ti \to \Re\cup \Ub\cup\{-\infty\}$, 
sending $\zero$ to $-\infty$, $(1,a)$ to $a$, and $(2,a)$ to $a^{\nu}$ for
$a\in\Re$ is an isomorphism.
\end{re}

\begin{re}\label{rk-izahkian} In~\cite{Izh05,Izh}, the extended
tropical semiring is also seen as the union of two copies $\R$ glued
by identifying the two $-\infty$ elements.
However, since $\N_2$ is not idempotent,
there is no possible identification of $\R$ as a subsemiring of 
$\Ti$. For instance,
the injection $\imath$ from $\rmax$ to $\Ti$, $a\mapsto (1,a)$ for $a\in\Re$
and $\ooo\mapsto \zero$, is not a morphism.
However the modulus map (Remark~\ref{extension-of-rmax})
yields a surjective morphism from $\Ti$ to $\rmax$.
\end{re}

We shall consider on $\N_2$ the identity symmetry, $-a:=a$.
Then the symmetry induced on $\Ti=\N_2\R$ as 
in Remark~\ref{extension-of-rmax}, is also the identity symmetry.
With these symmetries, we have:

\begin{pty}
$(\N_2)^\circ=\{0,2\}$, hence
$\Ti^\circ=\{\zero\}\cup(\{2\}\times (\R\setminus\{\ooo\}))$
and $\Ti^\vee=\imath(\rmax)=\{\zero\}\cup(\{1\}\times (\R\setminus\{\ooo\}))$.
\end{pty}
We shall say that an element of $\Ti$ 
is {\em real} if it
belongs to $\Ti^\vee=\imath(\rmax)$, and that it is {\em balanced}
if it belongs to $\Ti^\circ$. The same terminology applies to vectors
(meaning that every entry is real or balanced), and the notation
$\imath$ also applies to vectors or matrices (entrywise).

In $\Ti$, we have $a\balance b$ if and only if either $a,b$ have the same
modulus, or the element of $a,b$ which has the greatest modulus belongs to $\Ti^\circ$. 

In~\cite{Izh} the following notion of linear dependence over $\Ti$
is investigated.
\begin{definition} \label{DllI}
The vectors $v_1,\ldots,v_m\in \Ti ^n$ are called linearly dependent over $\Ti$ if there exist $\lambda_1,\ldots,\lambda_n\in \Ti^\vee$, not all equal to $\ooo$, such that $\lambda_1v_1\oplus\cdots \oplus \lambda_mv_m\balance \zero$. 
\end{definition}
When the vectors $v_1, \ldots,v_m$ are {\em real}, the latter relation holds if and only if when interpreting the expression $\lambda_1v_1\oplus\cdots \oplus \lambda_mv_m$ in the semiring $\rmax$, i.e., more formally, when
computing the vector $\mu_1w_1\oplus \cdots\oplus\mu_nw_n$ with $\lambda_k=\imath(\mu_k)$ and $v_k=\imath(w_k)$, the maximum is attained at least twice in every row. Hence, $w_1,\ldots,w_k$ are tropically linearly dependent in the sense
of Definition~\ref{D3}.

\subsection{Jets}
For any subsemiring $\SS$ of the semiring $(\Re_+,+,\cdot)$ of nonnegative real numbers, the semiring $\SS\R$ coincides
with the semiring of asymptotic expansions, when $p$ goes to infinity,
of the form $a e^{bp} +o(e^{bp})$ with $a\in\SS\setminus\{0\}$ and $b\in
\Re$, completed with the identically zero function, and endowed with the usual
addition and multiplication.
Taking for $\SS$ the set of nonnegative integers $\N$ and 
replacing $\R$ by the isomorphic semiring $(\Re\cup\{+\infty\},\min,+)$
(by the map $a\mapsto -a$, for the usual $-$ sign of $\Re$),
we recover the semiring introduced by
Finkelstein and Roytberg in~\cite{finkelstein} in order to
compute the number of conformations with minimum energy of 
an Ising chain at zero temperature.
Taking now $\SS=\Re_+$ and replacing $\R$ by the isomorphic 
semiring $(\Re_+,\max,\cdot)$
(by the map $a\mapsto \exp(a)$), we obtain the semiring of \NEW{jets} 
as defined by Akian, Bapat and Gaubert in~\cite{Bapat}.
There a spectral theorem on this semiring was shown which
allowed the authors to compute in some cases the
asymptotic expansion when $p$ goes to infinity 
of the Perron eigenvalue and eigenvector of a 
matrix with nonnegative entries, depending on a parameter $p$.

\section{Tropical Cramer theorems}
\label{sec-cramer}

We first recall the Cramer theorem in the symmetrized max-plus semiring, which was established by M.~Plus~\cite{Plus}. Its proof relies on an elimination argument, in which ``equations'' involving balances rather than equalities are considered. We shall see that the same elimination argument also yields a Cramer theorem in the extended tropical semiring, from which we recover a (slightly extended) version of the Cramer theorem of Richter-Gebert, Sturmfels and Theobald~\cite{RGST}.



The elimination argument uses the following properties of
the set $\smax^\vee$ of signed elements of $\smax$.
The two first ones, which are immediate, were stated in~\cite{Plus}.
The last one shows that although the balance relation is not transitive, some transitivity properties remain true when some of the data are signed.


\begin{pty}
\label{LRRR} 
For $x,y\in \smax^\vee$, we have that $x\balance y$ implies $x=y$. \qed
\end{pty}

\begin{pty}
The set $\smax^\vee\setminus \{\ooo\} =\S\setminus\smax^\circ$ is the set of all invertible elements in $\S$.  In particular, $\smax^\vee$ is stable 
with respect to product.  \qed\end{pty}

\begin{pty}[Weak transitivity of balances]\label{pty-weaktrans}
For all $b,d\in \smax$, we have 
\[
(x\in \smax^\vee, \; b\balance x \text{ and } x \balance d)\implies b\balance d \enspace .
\]
More generally, if $a\in \smax^\vee$, if $C\in \MM_{n,p}(\smax)$, 
$b\in \smax^p, d\in\smax^n$, then
\[
(x\in (\smax^\vee)^p, \; ax\balance b \text{ and } Cx \balance d)\implies Cb\balance ad \enspace .
\]
\end{pty}
\begin{proof}
Let $x\in \smax^\vee$, $b,d\in \smax$, and assume that $b\balance x$ and $x\balance d$. If $b\in \smax^\vee$, then, by Property~\ref{LRRR}, $x=b$, and so $b\balance d$. By symmetry, the same conclusion holds if $d\in \smax^\vee$. In the remaining case, we have $b,d\in \smax^\circ$ and so $b\balance d$. 

We show first the second implication when $a=\unit$. 
It follows from the construction
of $\smax$ that $x\balance b$ with $x$ signed implies that $b=x\oplus t^\circ$ for some element $t\in \smax^p$. Then, $Cb=Cx\oplus Ct^\circ \balance d$. 

Finally, if $a\in \smax^\vee$, $ax\in (\smax^\vee)^p$, 
$Cax\balance ad$ ($\smax^\circ$ is an ideal), and so, applying
the implication that we just proved to $ax\balance b$ and $Cax\balance ad$,
we deduce that $Cb\balance ad$.
\end{proof}

Since $\smax$ is a semiring with a canonical symmetry $x\mapsto \ominus x$, the determinant of a matrix is defined
by the usual formula, as in Definition~\ref{de:detS}, the term $\sgn(\sigma)$ being interpreted as $\unit$ or $\ominus \unit$ depending on the parity of $\sigma$. Similarly, the adjoint matrix $A\adj$ is defined by $A\adj=A\adjp\ominus A\adjm$ where the matrices $A\adjp$ and $A\adjm$ are defined as in Example~\ref{ex-cramer}.

The following result, which was first established in \cite{Plus}, yields
a Cramer rule for systems of balances over $\smax$.
\begin{theorem}[{Cramer theorem, \cite[Theorem 6.1]{Plus}}]\label{th-cramer}
Let $A\in {\mathcal{M}}_n(\S)$ and $b\in (\S)^n$, then
\begin{enumerate}
\item \label{cramer1} Every signed solution $x$ of the linear system
\[
Ax\balance b 
\]
satisfies the relation
$$ \detp A x \balance A\adj  b \enspace .$$
\item  \label{cramer2}
Moreover, if the vector $A\adj  b$ is signed and $\detp A$ is invertible in $\S$, then 
$$\hat x:={\detp A}^{-1} A\adj b$$
is the unique signed solution of $Ax\balance b$.
\end{enumerate}
\end{theorem}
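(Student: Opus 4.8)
The plan is to derive both parts from the semiring form of the classical adjugate identity, controlling the failure of transitivity of $\balance$ by the signedness of $x$.

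First I would record the tools. Over any commutative ring, $A\adj A=\detp{A}\,I=AA\adj$; applying the strong transfer principle (Theorem~\ref{transfer-s-2}) to this polynomial identity gives, over $\S$, the relations $A\adj A\succeq^\circ \detp{A}\,I$ and $AA\adj\succeq^\circ \detp{A}\,I$, and in particular the balances $A\adj A\balance \detp{A}\,I$ and $AA\adj\balance \detp{A}\,I$. I will use repeatedly that $\smax^\circ$ is an ideal, so that multiplying a balance on the left or right by a fixed matrix again yields a balance.

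For part~\ref{cramer1}, left-multiplying the hypothesis $Ax\balance b$ by $A\adj$ gives $A\adj Ax\balance A\adj b$, while right-multiplying the adjugate balance by $x$ gives $A\adj Ax\balance \detp{A}\,x$. One is tempted to chain these through the common term $A\adj Ax$ to obtain $\detp{A}\,x\balance A\adj b$, but balance is not transitive, and this is exactly the crux of the proof. The resolution is to use that $x$ is signed, via Property~\ref{pty-weaktrans}. The cleanest way to expose the correct signed pivot is to homogenize: set $B:=[\,A\mid \ominus b\,]\in\MM_{n,n+1}(\S)$ and let $z$ be the signed vector obtained by appending $\unit$ to $x$ (signed because $x$ is and $\unit\in\smax^\vee$), so that $Bz=Ax\ominus b\balance\zero$. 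The vector $w$ of signed maximal minors of $B$ satisfies $Bw\balance\zero$ (a determinant with a repeated row balances $\zero$), has last entry $\detp{A}$, and has $j$-th entry equal to $(A\adj b)_j$ up to sign. Property~\ref{pty-weaktrans}, applied with the signed vector $z$ as pivot, then forces the coordinatewise proportionality of $z$ and $w$ in the balance sense, which is precisely $\detp{A}\,x\balance A\adj b$.

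For part~\ref{cramer2}, assume $A\adj b$ signed and $\detp{A}$ invertible in $\S$ (hence signed). Put $\hat x:=\detp{A}^{-1}A\adj b$, which is signed, being the product of the signed scalar $\detp{A}^{-1}$ and the signed vector $A\adj b$. For existence, right-multiply $AA\adj\balance\detp{A}\,I$ by $b$ and then by $\detp{A}^{-1}$ to get $A\hat x=\detp{A}^{-1}AA\adj b\balance \detp{A}^{-1}\detp{A}\,b=b$, so $\hat x$ solves $Ax\balance b$. For uniqueness, let $x$ be any signed solution; part~\ref{cramer1} gives $\detp{A}\,x\balance A\adj b$, and multiplying by $\detp{A}^{-1}$ yields $x\balance\hat x$; since $x$ and $\hat x$ are both signed, Property~\ref{LRRR} forces $x=\hat x$.

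The main obstacle is concentrated in part~\ref{cramer1}: overcoming the non-transitivity of $\balance$. Everything depends on invoking the hypothesis that $x$ is signed at exactly the right step, so that the balanced error terms stemming from $A\adj A\ominus\detp{A}\,I$ are absorbed rather than illegitimately cancelled; the homogenization above is the device that makes the signed pivot, and hence Property~\ref{pty-weaktrans}, directly applicable. Once part~\ref{cramer1} is in hand, part~\ref{cramer2} is formal.
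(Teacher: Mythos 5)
Your part~\ref{cramer2} is correct and coincides with the paper's own argument (the Cramer identity of Example~\ref{ex-cramer} gives existence of the solution $\hat x$; part~\ref{cramer1} together with Property~\ref{LRRR} gives uniqueness), but part~\ref{cramer1} contains a genuine gap at exactly the step you yourself flag as the crux. Property~\ref{pty-weaktrans} has a very specific shape: from $ax\balance b$ and $Cx\balance d$, with $a$ a \emph{signed scalar} and $x$ a \emph{signed vector}, one may conclude $Cb\balance ad$. No single application of this property with your vector $z$ (i.e.\ $x$ with $\unit$ appended) as pivot yields the ``coordinatewise proportionality'' $w_{n+1}z_j\balance \pm\, w_j z_{n+1}$ between $z$ and the cofactor vector $w$ of $B=[A\mid\ominus b]$: to invoke the property you would need, besides the pivot $z$, a signed scalar $a$ and a second balance of the form $az\balance(\cdot)$ that already ties $z$ to $w$, and no such relation is available --- producing it is precisely what has to be proven. (Note also that $w$ may have balanced entries, so one cannot pivot on $w$ instead.) In effect, the sentence ``forces the coordinatewise proportionality of $z$ and $w$'' is a restatement of Assertion~\ref{cramer1} in homogenized form, asserted rather than derived; the entire content of the theorem is hidden in it.

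What fills this gap in the paper is an induction on the dimension $n$, carried out first under the extra assumption that $\detp{A}$ is signed: expanding $\detp{A}$ along a column $k$, signedness of $\detp{A}$ guarantees a row $l$ with $\detp{A(l|k)}$ signed, and it is this signed minor that plays the role of the signed scalar $a$ in Property~\ref{pty-weaktrans} when one eliminates the corresponding block of variables; combining the balance given by the induction hypothesis with the remaining equation then reconstitutes the expansions of $\detp{A}$ and of the Cramer determinant, giving $\detp{A}x_k\balance (A\adj b)_k$. Only after this case is settled does the paper treat balanced $\detp{A}$, and it does so by a homogenization very close to yours: for each index $j$ with $(A\adj b)_j$ signed (for the other indices the balance holds trivially, both sides being balanced), it adjoins $b$ as a column in place of $A_j$, observes that the determinant of the augmented matrix is exactly $(A\adj b)_j$, hence signed, and applies the already-established signed-determinant case to that auxiliary system. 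So your device of adjoining $b$ as an extra column is genuinely the right one for the second half of the argument, but it cannot substitute for the inductive elimination; it presupposes it.
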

The $i$-th entry of the vector $A\adj b$ coincides with the $i$-th Cramer
determinant, which is the determinant of the matrix obtained by replacing
the $i$th column of $A$ by the vector $b$. Hence, Theorem~\ref{th-cramer} gives
an analogue of Cramer rule.

We next prove Theorem~\ref{th-cramer}, along the lines of~\cite{Plus},
in a way which will allow us to derive a similar Cramer
theorem over the extended tropical semiring.
\begin{proof}[Proof of Theorem~\ref{th-cramer}]
We first establish Assertion~\eqref{cramer1} under the assumption that $\detp A$ is signed,
by induction on the dimension $n$. When $n=1$, the result is obvious. By 
expanding $\detp A$ with respect to the $k$-th column of $A=(a_{ij})$, we get 
\[
\detp A= \bigoplus_{l}(\ominus \unit)^{l+k} a_{lk}\detp{A(l|k)} \enspace 
\]
(recall that $A(l|k)$ denotes the submatrix of $A$ in which row $l$ and
column $k$ are suppressed).
Since $\detp A$ is signed, for any $k$ there must exist at least one $l$ (depending on $k$) such that $\detp{A(l|k)}$ is signed. Possibly after permuting the rows and the columns of $A$, we may assume that $l=k=n$, and we set $A':=A(n|n)$. The system $Ax\balance b$ can be rewritten as 
\begin{align}
A'x' &\balance b' \ominus c x_n \label{e-sys-1}\\
d x'&\balance b_n  \ominus a_{nn} x_n \label{e-sys-2}
\end{align}
where $b', c$ and $d$ denote the two column vectors and the row vector
of dimension $n-1$ such that $b'_i=b_i$, $c_i=A_{in}$
and $d_i=A_{ni}$ for $i=1,\ldots,n-1$.  By applying the induction assumption
to~\eqref{e-sys-1},
we get
\begin{align}\label{e-sys-3}
\detp{A'}x' \balance (A')\adj (b'\ominus cx_n) \enspace .
\end{align}
Using the weak transitivity property to~\eqref{e-sys-2} and~\eqref{e-sys-3}, we deduce that 
\[
d(A')\adj (b'\ominus cx_n) \balance \detp{A'}(b_n \ominus a_{nn}x_n) \enspace ,
\]
that is
\[
(\detp{A'}a_{nn} \ominus d(A')\adj c)x_n \balance \detp {A'}b_n 
\ominus d(A')\adj b' \enspace .
\]
In the factor at the left hand side, we recognize the expansion
of $\detp A$, whereas at the right hand side, we recognize the expansion
of the $n$-th Cramer
determinant. Hence, $\detp A x_n\balance  (A\adj b)_n$. Since the choice of the column was arbitrary in the previous argument,
it follows that $\detp A x\balance A\adj b$, which concludes the induction.

It remains to consider the case where $\detp A$ is balanced. It suffices to check that $\detp A x_j\balance (A\adj b)_j$ for every index $j$ such that 
$(A\adj b)_j$ is signed (for the other indices, the balance relation trivially holds).
We assume without loss of generality that $j=n$,
and we consider the system:\[
\left[ 
\begin{array}{cccc}A_{1}&\ldots&A_{n-1}&b
\end{array}
\right]
\left[ 
\begin{array}{cc}x_{1}\\ \ldots\\ x_{n-1}\\ \ominus \unit
\end{array}
\right]
\bal \ominus A_{n}x_{n}\enspace ,
\]
where $A_j$ denotes the column $j$ of $A$. This system
can be written as $A''x''\balance b''$ with 
$A'':=[A_{1},\ldots,A_{n-1},b]$, $x'':=[x_1,\ldots,x_{n-1},\ominus \unit]^t$
and $b'':=A_nx_n$.
Observe that the determinant of $A''$ is precisely $(A\adj b)_n$,
which we assumed to be signed.
Hence, we apply Assertion~\eqref{cramer1}
of the theorem, which is already proved for matrices
with a signed determinant, to the system $A''x''\bal b''$, which
gives:
\[
(A\adj b)_{n}\left[
\begin{array}{cc}x_{1}\\ \ldots\\ x_{n-1}\\ \ominus \unit
\end{array}
\right] \balance \ominus \left[
\begin{array}{cccc}A_{1}&\ldots&A_{n-1}&b
\end{array}
\right]\adj A_{n}x_{n} \enspace .
\] 
Taking the last entry, we get
\[
(A\adj b)_{n}\bal 
\detp A x_{n}
\]
which completes the proof of Assertion~\eqref{cramer1}.

To prove Assertion~\eqref{cramer2}, 
we deduce from the Cramer identity~\ref{ex-cramer} that
\[
AA\adj \balance \detp{A}I \enspace .
\]
If $\detp{A}$ is invertible, right multiplying this balance relation by $\detp{A}^{-1}b$, we get that $\hat x$ satisfies $A\hat x \balance b$. Conversely, by Assertion~\eqref{cramer1},
every signed solution $x$ of $Ax\balance b$ satisfies
$\detp{A}x\balance A\adj b$. Since $\detp{A}$ is invertible, the latter
condition is equivalent to $x\balance \hat{x}$. We deduce
from Property~\ref{LRRR} that $x=\hat{x}$, which completes the proof of Assertion~\eqref{cramer2}.
\end{proof}
As a corollary of this theorem, we recover a Cramer rule for two sided linear systems over $\R$, rather than for balances over $\smax$.
\begin{corollary}
Let $A',A''\in \M(\R)$ and $b',b''\in (\R)^n$. Then, any solution
$x\in \R^n$ of the system $A'x\oplus b'= A''x\oplus b''$ satisfies 
$|A|x\balance A\adj b$ where $A:=A'\ominus A''$ and $b:=b''\ominus b'$.
In particular, if $|A|$ is invertible, and if the vector
$A\adj b$ is signed, the solution is necessarily unique, and the solution
exists if and only if the entries of $|A|^{-1}A\adj b$ are sign-positive.
\end{corollary}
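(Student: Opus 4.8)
The plan is to reduce the two-sided system over $\R$ to a single balance $Ax\balance b$ over $\smax$ and then apply the Cramer theorem, Theorem~\ref{th-cramer}. First I would identify $\R$ with the set $\smax^\oplus$ of sign-positive elements, so that a solution $x\in\R^n$ becomes a sign-positive (hence signed) vector of $\smax^n$. Since all entries of $A',A'',b',b'',x$ are then sign-positive, both sides of the equation $A'x\oplus b'=A''x\oplus b''$ are sign-positive, and the $\R$-equality is equivalent to the $\smax$-balance $A'x\oplus b'\balance A''x\oplus b''$: the forward direction is reflexivity of $\balance$, while the converse is Property~\ref{LRRR}, which states that two signed elements that balance must be equal. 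Using that $\ominus$ is an additive homomorphism commuting with the scalar action, so that $A'x\ominus A''x=(A'\ominus A'')x=Ax$ and $b'\ominus b''=\ominus b$, the rule $u\balance v\iff u\ominus v\in\smax^\circ$ turns this into $Ax\balance b$. This proves, for sign-positive $x$, the equivalence between solving the two-sided $\R$-system and solving $Ax\balance b$.

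In particular, any solution $x\in\R^n$ of the system is a signed solution of $Ax\balance b$, so Assertion~\eqref{cramer1} of Theorem~\ref{th-cramer} gives $\detp{A}\,x\balance A\adj b$, which is the first claim. For the remaining assertions I would invoke Assertion~\eqref{cramer2}: when $\detp{A}$ is invertible and $A\adj b$ is signed, $\hat x:=\detp{A}^{-1}A\adj b$ is the unique signed solution of $Ax\balance b$. Because every real vector is signed, any solution of the two-sided system must coincide with $\hat x$, which yields uniqueness. Conversely $\hat x$ is itself signed (a product of signed elements, by the stability of $\smax^\vee$ under multiplication), so it comes from a genuine $\R$-solution exactly when all of its entries are sign-positive; hence a solution exists if and only if the entries of $\detp{A}^{-1}A\adj b$ are sign-positive.

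The one delicate point is the equivalence asserted in the first paragraph. Its forward direction is immediate, but the backward direction---recovering a true equality in $\R$ from a balance in $\smax$---depends essentially on Property~\ref{LRRR} together with the restriction to sign-positive vectors: a merely signed solution of $Ax\balance b$ may have sign-negative entries and therefore need not arise from any $\R$-solution, which is exactly why the existence criterion must test the sign-positivity of $\hat x$. Everything else is routine computation in the semiring with symmetry $\smax$.
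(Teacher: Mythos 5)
Your proposal is correct and follows essentially the same route as the paper's own proof: reduce the two-sided system to the balance $Ax\balance b$ via the embedding of $\R$ as $\smax^\oplus$, apply both assertions of Theorem~\ref{th-cramer}, and use Property~\ref{LRRR} to recover genuine equality from a balance between sign-positive vectors. Your version merely packages the two directions as an explicit equivalence up front (and spells out the ``existence implies sign-positivity'' direction, which the paper leaves implicit in the uniqueness claim), but the ingredients and their roles are identical.
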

\begin{proof}
If $A'x\oplus b'= A''x\oplus b''$, then $Ax\balance b$, and so, by Theorem~\ref{th-cramer}, $|A|x\balance A\adj b$. Hence, the uniqueness result follows from the previous theorem. If in addition,  the entries of $x:=|A|^{-1}A\adj b$ are sign-positive, $x$ may be thought of as an element of $\R^n$ rather than $\smax^n$, and
we have $Ax\balance b$, which implies that $A'x\oplus b'\balance A''x\oplus b''$.
It follows from Property~\ref{LRRR} that $A'x\oplus b'=A''x\oplus b''$. 
\end{proof}
Let us now replace the symmetrized max-plus semiring $\smax$ by the extended
max-plus semiring $\Ti$. Recall that an element of $\Ti$ is real
if it belongs to $\Ti^\vee=\imath(\rmax)$ and balanced if it belongs
to $\Ti^\circ$ (so, the real elements of $\Ti$ play the role
of the signed elements of $\smax$).

The properties~\ref{LRRR}--\ref{pty-weaktrans} are easily seen
to hold when $\smax$ is replaced by $\Ti$. Besides determinantal identities
(expansions of determinants and Cramer identities~\ref{ex-cramer}),
these properties are the only ingredient of the proof of Theorem~\ref{th-cramer}, and so,
the analogous result in $\Ti$ is true.
\begin{theorem}[{Cramer theorem in the extended tropical semiring}]\label{th-cramerI}
Let $A\in {\mathcal{M}}_n(\Ti)$ and $b\in {\Ti}^n$, then 
\begin{enumerate}
\item Every real solution $x$ of the linear system
\[
Ax\balance b \]
satisfies the relation
$$ \detp A x \balance A\adj  b \enspace .$$
\item Moreover, if the vector $A\adj  b$ is real and $\detp A$ is invertible in $\Ti$, then 
$$\hat x:={\detp A}^{-1} A\adj b$$
is the unique real solution of $Ax\balance b$.
\qed 
\end{enumerate}
\end{theorem}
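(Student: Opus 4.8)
The plan is to transport the proof of Theorem~\ref{th-cramer} to $\Ti$ almost verbatim, with the set $\Ti^\vee=\imath(\rmax)$ of real elements playing the role of the set $\smax^\vee$ of signed elements. Before re-running the induction, I would isolate the three structural facts about $\smax$ that the original proof actually uses, and check each in $\Ti$: (i) Property~\ref{LRRR}, that $x\balance y$ forces $x=y$ whenever $x,y$ are real; (ii) that the nonzero real elements are exactly the invertible ones and are stable under product; and (iii) the weak transitivity of balances, Property~\ref{pty-weaktrans}. Facts (i) and (ii) are immediate from the explicit description of $\balance$ in $\Ti$ recalled above (namely $a\balance b$ iff $a,b$ have equal modulus or the one of larger modulus is balanced), together with the computation that the only invertible element of $\N_2$ is $\unit$, so that the invertible elements of $\Ti$ are precisely $\imath(\rmax\setminus\{\ooo\})$.

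Alongside these I would record the elementary bookkeeping facts about $\Ti$ that mirror $\smax$: the balanced elements $\Ti^\circ$ form an ideal, and a finite sum is real exactly when its term of maximal modulus is unique and real (so a product is real iff both factors are, and a column expansion of a real determinant must therefore contain a real cofactor). None of this is affected by the fact that the symmetry of $\Ti$ is the identity map; it merely collapses $\ominus$ to $\oplus$ in the formulas. Finally, the determinantal identities needed---the cofactor expansion $\detp A=\bigoplus_l(\ominus\unit)^{l+k}a_{lk}\detp{A(l|k)}$ and the Cramer identity $AA\adj\balance\detp A I$ of Example~\ref{ex-cramer}---hold in $\Ti$ by the transfer principle (Theorem~\ref{transfer-w-2}), since $\Ti$ is a commutative semiring with symmetry.

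With these preliminaries, the induction on $n$ of Theorem~\ref{th-cramer} carries over line by line. For Assertion~(1) with $\detp A$ real, I would expand along a column, use facts (i) and (ii) to locate a real cofactor $\detp{A(l|k)}$, permute rows and columns so that $l=k=n$, split $Ax\balance b$ into the subsystems~\eqref{e-sys-1}--\eqref{e-sys-2}, apply the induction hypothesis to~\eqref{e-sys-1}, and combine the result with~\eqref{e-sys-2} through weak transitivity; the two factors that emerge are then recognized as the cofactor expansions of $\detp A$ and of the $n$-th Cramer determinant. The case of balanced $\detp A$ is treated exactly as in the original proof, by passing to the augmented matrix $[A_1,\dots,A_{n-1},b]$ and invoking the already-proved real case, and Assertion~(2) follows from $AA\adj\balance\detp A I$ together with Property~\ref{LRRR} for uniqueness, word for word as in the proof of Assertion~\eqref{cramer2}.

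The one point that needs genuine (if still routine) checking is fact (iii), the weak transitivity of balances in $\Ti$. Its proof in $\smax$ hinges on the structural lemma that a real $x$ with $x\balance b$ forces $b=x\oplus t^\circ$ for some $t\in\Ti$; I expect this to be the main obstacle, and would establish it directly from the characterization of $\balance$ by distinguishing the cases $\mymod{b}<\mymod{x}$ (which is vacuous, since the larger-modulus term $x$ is real), $\mymod{b}=\mymod{x}$ (forcing $b=x$ or $b=x^\circ$), and $\mymod{b}>\mymod{x}$ (forcing $b$ balanced). Once this scalar case is secured, the matrix version propagates exactly as in the proof of Property~\ref{pty-weaktrans}, using that $\Ti^\circ$ is an ideal, and nothing conceptual remains.
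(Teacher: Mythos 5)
Your proposal is correct and follows essentially the same route as the paper, whose proof of Theorem~\ref{th-cramerI} consists precisely of observing that Properties~\ref{LRRR}--\ref{pty-weaktrans}, together with the determinantal identities obtained from the transfer principle, are the only ingredients of the proof of Theorem~\ref{th-cramer}, and that all of them remain valid when $\smax$ is replaced by $\Ti$. Your explicit verification of weak transitivity in $\Ti$ (via the case analysis showing that $x$ real and $x\balance b$ force $b=x\oplus t^\circ$) simply spells out what the paper dismisses as ``easily seen''.
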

Since the symmetry of $\Ti$ is the identity map, the determinant $\detp{A}$ of a matrix $A\in \MM_n(\Ti)$ coincides with the permanent $\per(A)$.

As a corollary, we next derive a Cramer rule for the systems of tropical linear equations already considered by Richter-Gebert, Sturmfels and Theobald~\cite{RGST}. The following notion used in~\cite{RGST} was introduced by Butkovi\v{c} under the name of \NEW{strong regularity}, see for instance~\cite{butkovip94,But}. To avoid the risk of confusion
with the notion of Von Neumann regularity
(matrices with a generalized inverse), we shall keep the terminology
of~\cite{RGST}.
\begin{definition}
A matrix $A\in \MM_n(\rmax)$ is said to be \NEW{tropically singular} if
the maximum is attained twice in the expression~\eqref{E1} of the permanent
of $A$, i.e., with the usual notation, in
\[
\per A = \max_{\sigma\in \mathfrak{S}_n}(a_{1\sigma(1)}+\cdots+ a_{n\sigma(n)}) \enspace .
\]
\end{definition}
Note that if $\per A=\zero$, $A$ is tropically singular. A matrix
$A\in \MM_n(\rmax)$ is tropically singular if and only if $\detp{A}$
when interpreted in $\Ti$ is balanced, meaning that
$\detp{\imath(A)}\in \Ti^\circ$. So $\detp{\imath(A)}$ is invertible
in $\Ti$ if and only if $A$ is tropically nonsingular, which
provides a further justification for the name of the notion.

\begin{re}
In an arbitrary semiring, we may define tropically singular matrices by requiring that for some subset ${\cal T}$ of $\allperm_n$ distinct from the empty set and  from $\allperm_n$, 
$$\sum\limits_{\sigma\in {\cal T}} a_{1\sigma(1)}\cdots a_{n\sigma(n)} = \sum\limits_{\sigma\in \allperm_n\setminus {\cal T}} a_{1\sigma(1)}\cdots a_{n\sigma(n)}.$$ 
\end{re}
In the next corollary, we denote by $B_i$ the $i$th Cramer matrix of $(A,b)$,
obtained by replacing the $i$th column of $A$ by $b$.
The $i$th Cramer permanent is defined as $\per B_i$. 
This corollary is a mere specialization of Theorem~\ref{th-cramerI}
to matrices and vectors with real entries.
\begin{corollary}[Cramer theorem for tropical linear equations, compare with {\cite[Corollary~5.4]{RGST}}]\label{coro-sturmf}
Let $A=(a_{ij})\in \MM_n(\rmax)$, $b,x\in \rmax^n$. 
Assume that for every row index $1\leq i\leq n$, the maximum in the expression
\begin{align}
\label{e-tropicaleq}
\bigoplus_{j}a_{ij}x_j \oplus b_i 
\end{align}
is attained at least twice. Then, for all $1\leq i\leq n$, if
we expand $\per B_i$ and $\per A$ in
\[
(\per A) \, x_i \oplus  \per B_i  \enspace,
\]
the maximum is attained at least twice in the global expression.
Moreover, if $A$ is tropically nonsingular and if every Cramer matrix $B_i$
is tropically nonsingular or has a zero permanent, then
$\hat x:= ((\per A)^{-1}\per B_i)_{1\leq i\leq n}$ is the unique vector $x\in\rmax^n$ such that the maximum in Expression~\eqref{e-tropicaleq} is attained at least twice, for every $1\leq i\leq n$.\qed
\end{corollary}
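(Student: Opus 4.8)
The plan is to transport both the hypothesis and the conclusion into the extended tropical semiring $\Ti$ through the embedding $\imath\colon\rmax\to\Ti$, and then to read off the result from Theorem~\ref{th-cramerI}. The first step is to set up the dictionary between the max-plus and the $\Ti$ languages: for real elements a $\Ti$-sum $\bigoplus_k\imath(c_k)$ lies in $\Ti^\circ$ (is balanced) exactly when the maximum of the $c_k$ is attained at least twice, and is real otherwise. Since the symmetry of $\Ti$ is the identity map, we have $\ominus\imath(b)=\imath(b)$, so the hypothesis that the maximum in $\bigoplus_j a_{ij}x_j\oplus b_i$ is attained at least twice for every row $i$ is precisely the statement $\imath(A)\imath(x)\balance\imath(b)$. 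In other words, $\imath(x)$ is a real solution of the linear system $\imath(A)y\balance\imath(b)$ over $\Ti$.

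Next I would identify the determinantal data. Because $\imath$ preserves products of real elements while only the additions are genuinely performed in $\Ti$, one has $\detp{\imath(A)}=\bigoplus_{\sigma}\imath(\text{weight of }\sigma)$, so $\detp{\imath(A)}$ has modulus $\per A$ and is real exactly when $A$ is tropically nonsingular; likewise the $i$-th entry of $\imath(A)\adj\imath(b)$ is $\detp{\imath(B_i)}$, of modulus $\per B_i$, real exactly when $B_i$ is tropically nonsingular or has zero permanent. Applying Assertion~(1) of Theorem~\ref{th-cramerI} to the real solution $\imath(x)$ gives $\detp{\imath(A)}\imath(x_i)\balance\detp{\imath(B_i)}$ for every $i$. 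Reading this balance back through the dictionary, with both sides being $\Ti$-sums of real terms produced by expanding $(\per A)\odot x_i$ and $\per B_i$ over permutations, yields exactly that the maximum in the global expansion of $(\per A)\,x_i\oplus\per B_i$ is attained at least twice, which is the first assertion.

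For the second assertion I would invoke Assertion~(2) under the regularity hypotheses. If $A$ is tropically nonsingular then $\detp{\imath(A)}$ is a real nonzero element, hence invertible in $\Ti$; and if every $B_i$ is tropically nonsingular or has zero permanent then $\imath(A)\adj\imath(b)$ is a real vector. Theorem~\ref{th-cramerI}(2) then furnishes a unique real solution $\hat y=\detp{\imath(A)}^{-1}\imath(A)\adj\imath(b)$. Since $\Ti^\vee$ is stable under products and under inverses of its nonzero elements, each entry $\detp{\imath(A)}^{-1}\detp{\imath(B_i)}$ is real, so $\hat y=\imath(\hat x)$ with $\hat x_i=(\per A)^{-1}\per B_i$. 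As the map $x\mapsto\imath(x)$ is a bijection between max-plus vectors satisfying the twice-attained condition and real solutions of $\imath(A)y\balance\imath(b)$, the uniqueness of $\hat y$ translates into the uniqueness of $\hat x$, proving the moreover part.

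The only genuine work lies in the translation dictionary of the first paragraph and its reuse in the second: one must verify term by term that a balance $u\balance v$ between $\Ti$-sums of real elements is equivalent to the overall maximum being attained at least twice, distinguishing the three cases according to whether the modulus of $u$ exceeds, equals, or falls below that of $v$, and one must treat the boundary cases where a permanent equals $\zero$. Everything else is a mechanical specialization of Theorem~\ref{th-cramerI}, which is exactly why the corollary can be asserted as an immediate consequence.
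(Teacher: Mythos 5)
Your proposal is correct and takes essentially the same route as the paper: the paper proves this corollary with the single remark that it is ``a mere specialization of Theorem~\ref{th-cramerI} to matrices and vectors with real entries,'' and your argument is exactly that specialization, carried out via the embedding $\imath$ together with the dictionary identifying balances among sums of real elements of $\Ti$ with the twice-attained-maximum condition. The supporting details you supply (that $(\imath(A)\adj\imath(b))_i=\detp{\imath(B_i)}$, that the invertible elements of $\Ti$ are the nonzero real ones, and the treatment of zero permanents) all match the paper's conventions, so nothing is missing.
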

A result closely related to the second part of this corollary is proved 
by Richter-Gebert, Sturmfels, and Theobald in~\cite[Coro.~5.4]{RGST},
by an elegant technique building an an earlier study
of special transportation polytopes by Sturmfels and Zelevinsky~\cite{SZ}.
It is shown in~\cite{RGST}, under the same assumptions,
that the tropical Cramer permanents are given (up to an additive constant)
by the unique optimal solution of a transport problem, and that the dual
variables of this transportation problem are unique.


%

The following theorem shows that the existence part of Theorem~\ref{th-cramer} does not require the condition that all the Cramer determinants be signed. 
This theorem was proved in~\cite{Plus} when the determinant of $A$ is non-zero.
An extension of this proof to the general case appeared in~\cite{gaubert92a};
a more recent presentation can be found in~\cite{AGG2}.
The proof arguments rely of the convergence of 
an iterative Jacobi-type algorithm, introduced in~\cite{Plus},
which allows one to solve the system $Ax\balance b$.
\begin{theorem}[{\cite[Th.~6.2]{Plus}}]\label{th-jac}
Let $A\in \M(\smax)$, and assume that $\detp A\neq \zero$ (but possibly $\detp{A}\balance \zero$). Then, for every $b\in \smax^n$, there exists a signed
solution $x$ of $Ax\bal b$. 
\end{theorem}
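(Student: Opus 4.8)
The plan is to realize a signed solution as the output of a Jacobi-type iteration solving the balance system $Ax\balance b$ coordinatewise, following~\cite{Plus}. First I would normalize the data. Since $\detp A\neq\zero$, the assignment problem $\max_{\sigma}\bigodot_i \mymod{a_{i\sigma(i)}}$ has finite optimal value; choosing an optimal permutation and permuting the columns of $A$ (which merely relabels the unknowns and multiplies $\detp A$ by an invertible factor, so preserves both the hypothesis and the conclusion), I may assume the identity is optimal, whence $a_{ii}\neq\zero$ for all $i$. Multiplying $A$ on the left and right by invertible diagonal matrices — that is, rescaling the rows and the unknowns, which transforms $Ax\balance b$ into an equivalent balance system whose signed solutions correspond to those of the original — I normalize so that $\mymod{a_{ii}}=\mymod{\unit}$ and $\mymod{a_{ij}}\leq\mymod{\unit}$ for all $i,j$, and I split $A=\Delta\oplus N$ with $\Delta=\operatorname{diag}(a_{11},\dots,a_{nn})$ and $N$ of zero diagonal.

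Assume first that $\detp A$ is signed; after normalization this means the identity is the \emph{unique} optimal assignment, so every circuit of $N$ has weight (the sum of the moduli along it) strictly below $\mymod{\unit}$, and each $a_{ii}$ is signed, hence invertible. The system then reads $x\balance c\ominus Mx$ with $c:=\Delta^{-1}b$ and $M:=\Delta^{-1}N$, where $M$ has zero diagonal and all circuits of weight $<\mymod{\unit}$. Starting from $x^{(0)}:=c$ and iterating $x^{(k+1)}:=c\ominus Mx^{(k)}$, any path of length at least $n$ contains such a circuit and is strictly dominated, so the star $(\ominus M)^\ast=\bigoplus_{0\le k<n}(\ominus M)^k$ is well defined and the iteration stabilizes after at most $n$ steps at $x^\star=(\ominus M)^\ast c$. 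Because all the relevant maxima are then attained with strict domination, no cancellation into balanced entries occurs and $x^\star$ is signed; applying the weak transitivity of balances (Property~\ref{pty-weaktrans}) at each step shows that $x^\star$ satisfies $Ax^\star\balance b$.

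The main obstacle is the degenerate case where $\detp A$ is balanced yet nonzero. Here the assignment problem has at least two optimal permutations of opposite parity, so after normalization $N$ carries circuits of weight exactly $\mymod{\unit}$, the diagonal need not be signed, and the iteration neither contracts nor is guaranteed to land on a signed vector. To handle this I would perturb: replace $A$ by a nearby matrix $A_t$ (scaling the diagonal by a signed factor of modulus just above $\mymod{\unit}$) so that the identity becomes the strict optimum and $\detp{A_t}$ becomes signed and invertible. The case just treated yields signed solutions $x_t$ of $A_tx_t\balance b$, whose moduli are uniformly bounded through the Cramer relation $\detp{A_t}\,x_t\balance A_t\adj b$ of Theorem~\ref{th-cramer}\eqref{cramer1}. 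Letting the perturbation tend to the identity and using that the balance relation is closed under passage to the limit, I would extract a limit $x^\star$ satisfying $Ax^\star\balance b$. The crux is that signedness is \emph{not} closed under limits, so one must argue — by a combinatorial stabilization of the finitely many sign patterns, together with the hypothesis $\detp A\neq\zero$ — that the limit can be chosen signed; this is precisely the step where $\detp A\neq\zero$ is indispensable. Alternatively, one may eliminate a single variable exactly as in the proof of Theorem~\ref{th-cramer}, reducing the balanced case to a lower-dimensional system with nonzero determinant and concluding by induction on $n$.
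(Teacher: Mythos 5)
The paper itself contains no proof of Theorem~\ref{th-jac}: it is quoted from \cite{Plus}, with the extension and full proofs deferred to \cite{gaubert92a} and \cite{AGG2}, and the only indication given is that the argument rests on the convergence of a Jacobi-type algorithm. Your proposal adopts that strategy in name, but the algorithm you actually analyze is not the Jacobi algorithm of \cite{Plus}, and the analysis fails at exactly the point where the real difficulty sits.

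Two concrete gaps. First, your reduction in the ``signed determinant'' case rests on a false claim: a signed determinant does not imply that the optimal assignment is unique after normalization. For
\[
A=\begin{pmatrix}\unit & \unit\\ \ominus\unit & \unit\end{pmatrix},
\qquad \detp{A}=\unit\odot\unit\ominus\bigl(\unit\odot(\ominus\unit)\bigr)=\unit\oplus\unit=\unit ,
\]
the determinant is signed and invertible although both permutations are optimal, so your ``all circuits of $N$ have weight strictly below $\mymod{\unit}$'' step already breaks. Second, and more fundamentally, even when the identity \emph{is} the unique optimal permutation your assertion that ``no cancellation into balanced entries occurs and $x^\star$ is signed'' is wrong: cancellations in the Neumann series $(\ominus M)^\ast c$ occur between the terms $c$, $\ominus Mc$, $M^2c,\dots$ themselves, independently of circuit weights. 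Take $A=\begin{pmatrix}\unit&\unit\\ -1&\unit\end{pmatrix}$ with entries in $\rmax\subset\smax$ and $b=[\unit,\unit]^t$: the identity is the unique optimal assignment and $\detp{A}=\unit\ominus(-1)=\unit$ is signed, yet your iteration $x^{(0)}=c=b$, $x^{(k+1)}=c\ominus Mx^{(k)}$ stabilizes at $[\unit^\circ,\unit]^t$, which is not signed, whereas the signed solution $[\zero,\unit]^t$ exists. The missing idea is that the genuine Jacobi scheme of \cite{Plus} does not iterate a fixed affine map: at each step one must \emph{select} a signed solution of the diagonal system $\Delta x^{(k+1)}\balance b\ominus Nx^{(k)}$ (a choice that is not forced when a right-hand side is balanced), and the whole content of the theorem is that a suitable selection rule converges to a signed solution of the full system; Property~\ref{pty-weaktrans} alone does not substitute for this. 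Finally, you concede that the balanced-but-nonzero-determinant case is left open (signedness is indeed not preserved by limits, and no topology on $\smax$ has even been fixed), and the fallback you sketch does not close it: the elimination in the proof of Theorem~\ref{th-cramer} derives necessary balance relations from a solution assumed to exist, so it cannot by itself construct one in an inductive existence argument. In sum, both cases of your proof contain essential, unrepaired gaps.
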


A result analogous to the previous one, but with the extended
tropical semiring $\Ti$ instead of $\smax$, is proved in~\cite{AGG2}.

Theorem~\ref{th-cramer} has an homogeneous analogue, which was stated in~\cite{Plus} and proved in~\cite[Ch.~3,S.~9]{gaubert92a}, see also~\cite{ButGa}.
\begin{theorem}[{\cite[6.5]{Plus}}]
\label{TGMa}
Let $A \in\M(\smax)$. Then there exists $x\in (\smax^\vee)^n\setminus \{\zero\}$ such that $Ax\balance \zero$ if and only if $\detp A \balance \ooo$.
\end{theorem}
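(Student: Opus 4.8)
The plan is to prove the two implications separately. The forward (``only if'') direction is an immediate consequence of the inhomogeneous Cramer theorem, while the converse (``if'') direction needs the elimination machinery behind Theorem~\ref{th-jac}.

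\emph{The forward direction.} Suppose $x\in(\smax^\vee)^n\setminus\{\zero\}$ satisfies $Ax\balance\zero$. Applying Assertion~\eqref{cramer1} of Theorem~\ref{th-cramer} with $b=\zero$ (the vector $x$ is signed, so the hypothesis of that assertion is met) gives $\detp{A}\,x\balance A\adj\,\zero=\zero$, that is $\detp{A}\,x_i\in\smax^\circ$ for every $i$. Since $x\neq\zero$ and all its entries are signed, some entry $x_i$ lies in $\smax^\vee\setminus\{\zero\}$ and is therefore invertible. As $\smax^\circ$ is an ideal, $\detp{A}=x_i^{-1}\bigl(\detp{A}\,x_i\bigr)\in\smax^\circ$, i.e.\ $\detp{A}\balance\zero$. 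This direction uses nothing beyond Theorem~\ref{th-cramer} and the ideal property of $\smax^\circ$.

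\emph{The converse direction: producing a null vector.} Assume $\detp{A}\balance\zero$; we must exhibit a \emph{signed} $x\neq\zero$ with $Ax\balance\zero$. The first step is to build \emph{some} nonzero null vector. Suppose, after permuting rows and columns, that the minor $\detp{M}$ with $M:=A(n|n)$ is signed and nonzero, and write the last column and last row of $A$ as $c=(a_{1n},\dots,a_{n-1,n})^t$ and $d=(a_{n1},\dots,a_{n,n-1})$. Set
\[
x_i:=\ominus (M\adj c)_i\ \ (1\le i\le n-1),\qquad x_n:=\detp{M}.
\]
For $i\le n-1$, the Cramer identity for $M$ (Example~\ref{ex-cramer}) gives $MM\adj\balance\detp{M}\,I$, whence $(Ax)_i=\ominus(MM\adj c)_i\oplus\detp{M}\,c_i=\ominus\bigl[(MM\adj c)_i\ominus\detp{M}\,c_i\bigr]\in\smax^\circ$; for the last row, the one-step Schur expansion $\detp{A}=a_{nn}\detp{M}\ominus d\,M\adj c$, which is an \emph{equality} in $\smax$ (each permutation with $\sigma(n)\neq n$ is recovered exactly once, so no cancellation occurs), yields $(Ax)_n=\detp{A}\in\smax^\circ$. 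Thus $Ax\balance\zero$, and $x_n=\detp{M}$ is signed and nonzero, so $x\neq\zero$. The degenerate cases where no signed $(n-1)$-minor exists — in particular $\detp{A}=\zero$, where every permutation product vanishes — are reduced by induction on $n$, pivoting on a maximal signed minor and using a König-type covering of the zero pattern.

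\emph{The main obstacle.} The vector just constructed need not be signed: its entries $\ominus(M\adj c)_i$ are alternating sums of cofactors and may be balanced, so in general $x\notin(\smax^\vee)^n$. Upgrading a nonzero null vector to a \emph{signed} one is the crux, and it cannot be achieved by solving a square subsystem and hoping the remaining balance closes up. Indeed, because $\balance$ is not transitive, a signed solution of the top $n-1$ equations (obtained from Theorem~\ref{th-jac}) satisfies the last equation only up to $(Ax)_n\balance\detp{A}$, and the chain $(Ax)_n\balance\detp{A}\balance\zero$ does \emph{not} force $(Ax)_n\balance\zero$ (a signed $(Ax)_n$ can balance a balanced element of larger modulus without itself being balanced). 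The correct route is to run the Jacobi-type elimination underlying Theorem~\ref{th-jac} on the full homogeneous system simultaneously, pivoting on signed entries and invoking the weak transitivity of balances (Property~\ref{pty-weaktrans}) to transport each balance through the signed pivots; the hypothesis $\detp{A}\balance\zero$ is precisely what guarantees that this elimination terminates on a nontrivial signed solution rather than collapsing to $x=\zero$. All the difficulty concentrates in this last step.
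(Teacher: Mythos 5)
Your forward (``only if'') direction is correct and is in substance the paper's own argument: the paper obtains it by setting $b=\zero$ in part~2 of Theorem~\ref{th-cramer} (a nonzero signed solution would contradict the uniqueness of the signed solution $\zero$ when $\detp{A}$ is invertible), while you use part~1 together with the invertibility of a nonzero signed entry and the ideal property of $\smax^\circ$; these are interchangeable one-line deductions. Your intermediate work in the converse direction is also sound as far as it goes: the vector $x=(\ominus M\adj c,\detp{M})$ does satisfy $Ax\balance\zero$ when $\detp{A}\balance\zero$, and your observation that the cofactor expansion $\detp{A}=a_{nn}\detp{M}\ominus d\,M\adj c$ is an exact identity in $\smax$ (each signed monomial occurs exactly once, so distributivity alone yields it) is correct.

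The converse direction, however, contains a genuine gap, and you name it yourself: the null vector you construct need not lie in $(\smax^\vee)^n$, and the existence of a \emph{signed} null vector is the entire content of the theorem. Everything after that point in your text is a plan, not a proof. The claim that ``the hypothesis $\detp{A}\balance\zero$ is precisely what guarantees that this elimination terminates on a nontrivial signed solution rather than collapsing to $x=\zero$'' is exactly the assertion to be proved, and invoking Theorem~\ref{th-jac} does not supply it: that theorem produces signed solutions of inhomogeneous systems $Ax\balance b$ under $\detp{A}\neq\zero$, and for $b=\zero$ the solution it yields may perfectly well be $x=\zero$. Likewise, the reduction of the ``degenerate cases'' by ``induction on $n$ \dots\ using a K\"onig-type covering'' is asserted but never carried out. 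This missing step is precisely the one the paper does not reprove either: it delegates the ``if'' part to~\cite{gaubert92a}, where it is established by combining Theorem~\ref{th-jac} with an idea of Gondran and Minoux~\cite{GM2}. You invoke the same two ingredients by name, but the combination is never executed, so the proposal does not establish the implication $\detp{A}\balance\zero\Rightarrow\exists\, x\in(\smax^\vee)^n\setminus\{\zero\},\ Ax\balance\zero$.
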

The ``only if'' part is obtained by taking $b=\zero$ in the second
part of Theorem~\ref{th-cramer}. 
The ``if'' part is proved in~\cite{gaubert92a} by combining Theorem~\ref{th-jac}
with an idea of Gondran and Minoux~\cite{GM2}. Indeed,
the special case in which $A\in \MM_n(\rmax)$ can be stated
as follows.
\begin{corollary}[{Gondran-Minoux theorem~\cite{GM2}}]
\label{TGM}
Let $A \in\M(\rmax)$. Then $\detp A \balance \ooo$ if and only if columns of $A$ are linearly dependent in the sense of Gondran and Minoux (Definition~\ref{D2}).
\end{corollary}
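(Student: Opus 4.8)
The plan is to deduce the corollary from the homogeneous Cramer theorem (Theorem~\ref{TGMa}), by decoding a signed solution $x$ of the balance $Ax\balance\zero$ over $\smax$ into a Gondran--Minoux sign partition over $\rmax$, and conversely. Throughout I would view $A\in\MM_n(\rmax)$ as a matrix with entries in $\smax$ via the embedding $t\mapsto\overline{(t,\ooo)}$ of $\rmax$ onto $\smax^\oplus$, so that Theorem~\ref{TGMa} applies verbatim and the determinant $\detp A$ is the one computed in $\smax$. By that theorem, $\detp A\balance\ooo$ is equivalent to the existence of $x\in(\smax^\vee)^n\setminus\{\zero\}$ with $Ax\balance\zero$, so the whole task reduces to matching this existence statement with Gondran--Minoux dependence of the columns.

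First I would set up the dictionary between signed vectors and sign partitions. Given a signed vector $x\in(\smax^\vee)^n$, split $K=\{1,\dots,n\}$ into $I=\{k:x_k\in\smax^\oplus\}$ and $J=\{k:x_k\in\smax^\ominus\setminus\{\ooo\}\}$; this is a genuine partition of $K$ because $\smax^\vee=\smax^\oplus\cup\smax^\ominus$ and the only common element of $\smax^\oplus$ and $\smax^\ominus$ is $\ooo$, which has been assigned to $I$. Writing $\alpha_k:=\mymod{x_k}\in\rmax$ for the modulus, one has $x_k=\alpha_k$ for $k\in I$ and $x_k=\ominus\alpha_k$ for $k\in J$, whence, denoting by $A_k$ the $k$-th column of $A$,
\[
Ax \;=\; \bigoplus_{k} x_k A_k \;=\; u\ominus v, \qquad u:=\bigoplus_{i\in I}\alpha_i A_i,\quad v:=\bigoplus_{j\in J}\alpha_j A_j,
\]
where $u,v\in\rmax^n$ are sign-positive. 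Conversely, any partition $(I,J)$ of $K$ together with scalars $\alpha_k\in\rmax$ produces such a signed vector $x$ by the same formulas, with $x\neq\zero$ holding precisely when the $\alpha_k$ are not all $\ooo$.

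The crux is then to translate the balance $Ax\balance\zero$ into an equality over $\rmax$. Applying the subtractivity rules of $\smax$ entrywise, for $a,b\in\rmax$ (identified with sign-positive elements) one has $a\ominus b\in\smax^\circ$ if and only if $a=b$. Hence $Ax=u\ominus v\balance\zero$, i.e.\ $u\ominus v\in(\smax^\circ)^n$, holds if and only if $u=v$ in $\rmax^n$, that is $\bigoplus_{i\in I}\alpha_i A_i=\bigoplus_{j\in J}\alpha_j A_j$. Combining this with the dictionary above shows that the existence of $x\in(\smax^\vee)^n\setminus\{\zero\}$ with $Ax\balance\zero$ is exactly the linear dependence of the columns of $A$ in the Gondran--Minoux sense (Definition~\ref{D2}); invoking Theorem~\ref{TGMa} then identifies this with $\detp A\balance\ooo$ and closes the argument.

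I expect the only genuinely delicate point to be the bookkeeping of the zero entries of $x$ and of the zero scalars $\alpha_k$: one must verify that placing the indices with $x_k=\ooo$ into $I$ still yields an admissible disjoint covering of $K$, and that the non-triviality condition ``not all $\alpha_k=\ooo$'' corresponds exactly to ``$x\neq\zero$'', so that the two sides of the equivalence have matching non-degeneracy hypotheses. Everything else reduces to the subtractivity rules recalled above and to the embedding of $\rmax$ into $\smax^\oplus$; this decoding is precisely the ``idea of Gondran and Minoux'' alluded to in the discussion following Theorem~\ref{TGMa}.
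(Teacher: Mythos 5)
Your proof is correct and takes essentially the same route as the paper: there, Corollary~\ref{TGM} is obtained precisely as the specialization of Theorem~\ref{TGMa} to matrices with entries in $\rmax$, the dictionary between signed vectors $x\in(\smax^\vee)^n$ and Gondran--Minoux sign partitions $(I,J)$ being left implicit. Your explicit bookkeeping (assigning indices with $x_k=\ooo$ to $I$, the equivalence $a\ominus b\in\smax^\circ\iff a=b$ for $a,b\in\rmax$, and the matching of the non-degeneracy conditions) supplies exactly the details the paper omits.
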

%
As a corollary of Gondran-Minoux theorem, we obtain
an analogue of the famous Radon theorem in convex geometry, which
shows that $n+1$ vectors in dimension $n$ can be partitioned in two
subsets in such a way that the two convex cones generated by these
subsets have an intersection that is not reduced to the origin.
The max-plus Radon theorem was
first derived from the Gondran-Minoux theorem by P.~Butkovi\v{c}~\cite[Theorem 4.7]{But} in the special case of vectors without a $-\infty$ entry. The latter
restriction turns out to be unnecessary, since a more general derivation, combining the Gondran-Minoux theorem and the Cramer
theorem, was sketched in~\cite{AG}, we detail the argument below for the sake
of completeness. Briec and Horvath gave a different proof,
by seeing tropical convex sets as limits of classical convex sets~\cite{BriecHorvath04}. The Radon theorem is also proved by Gaubert and Meunier in~\cite{meunier}, where max-plus analogues
of other results of discrete convex geometry are established.

\begin{corollary}[Max-plus Radon theorem, see~\cite{But}, \cite{BriecHorvath04}, {\cite[{p.~13}]{AG}}, \cite{meunier}]
\label{LS}
Every family of $n+1$ vectors of $\R ^n$ is linearly dependent in the sense of Gondran and Minoux.
\end{corollary}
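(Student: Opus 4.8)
The plan is to reduce the statement to the square-matrix Gondran--Minoux theorem (Corollary~\ref{TGM}) by raising the dimension by one. Given the family $v_1,\dots,v_{n+1}\in\R^n$, I would append to each $v_i$ an $(n+1)$-st entry equal to $\ooo$, producing $\tilde v_1,\dots,\tilde v_{n+1}\in\R^{n+1}$, and let $\tilde A\in\MM_{n+1}(\R)$ be the square matrix whose $i$-th column is $\tilde v_i$. By construction the last row of $\tilde A$ is identically $\ooo$. The point of this trick is that it turns ``more vectors than the dimension'' into an honest square matrix to which Corollary~\ref{TGM} applies, mirroring the classical fact that $n+1$ vectors in an $n$-dimensional space are linearly dependent.

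The key computation is that $\detp{\tilde A}\balance\ooo$. Following the convention of Corollary~\ref{TGM}, the determinant and the balance are read in $\smax$, after identifying $\R$ with the sign-positive part $\smax^\oplus$. In the defining sum $\detp{\tilde A}=\bigoplus_{\sigma\in\allperm_{n+1}}\sgn(\sigma)\,\tilde a_{1\sigma(1)}\cdots \tilde a_{n+1,\sigma(n+1)}$ every term contains exactly one factor drawn from the last row, namely $\tilde a_{n+1,\sigma(n+1)}=\ooo=\zero$; since $\zero$ is absorbing, every term, and hence $\detp{\tilde A}$, equals $\zero$, so indeed $\detp{\tilde A}\balance\ooo$. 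Corollary~\ref{TGM} (the Gondran--Minoux theorem) then yields disjoint index sets $I,J$ with $I\cup J=\{1,\dots,n+1\}$ and scalars $\alpha_1,\dots,\alpha_{n+1}\in\R$, not all equal to $\ooo$, such that $\bigoplus_{i\in I}\alpha_i\tilde v_i=\bigoplus_{j\in J}\alpha_j\tilde v_j$ in $\R^{n+1}$.

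It remains only to project back to $\R^n$. Reading the vector identity above in its first $n$ coordinates gives $\bigoplus_{i\in I}\alpha_i v_i=\bigoplus_{j\in J}\alpha_j v_j$ with the same partition $(I,J)$ and the same scalars, still not all $\ooo$, which is precisely a Gondran--Minoux dependence of $v_1,\dots,v_{n+1}$; the $(n+1)$-st coordinate merely reads $\ooo=\ooo$ and imposes nothing. The only facts requiring (easy) verification are that appending a $\ooo$-row forces the determinant to be $\zero$, hence balanced, and that truncating the last coordinate preserves both the partition and the non-triviality of the coefficients. If one prefers to bypass Corollary~\ref{TGM}, the same conclusion follows by applying the homogeneous Cramer theorem (Theorem~\ref{TGMa}) to $\tilde A$ over $\smax$: since $\detp{\tilde A}\balance\ooo$, there is a nonzero signed vector $x\in(\smax^\vee)^{n+1}$ with $\tilde A x\balance\zero$, and splitting the indices according to the sign of $x_i$ and invoking Property~\ref{LRRR} (a balance between signed elements is an equality) recovers the partition and scalars. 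I expect the only genuinely idea-bearing step to be the dimension-raising reduction; after that the argument is a one-line determinant evaluation followed by a citation.
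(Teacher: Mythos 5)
Your proof is correct, but it takes a genuinely different route from the paper's. The paper argues by cases on the $n+1$ maximal square submatrices $V_i$ obtained by deleting one vector at a time: if some $\detp{V_i}\balance\ooo$, it applies the Gondran--Minoux theorem (Corollary~\ref{TGM}) to that $n\times n$ matrix, and a fortiori the whole family is dependent; if every $\detp{V_i}$ is unbalanced, it instead uses the Cramer theorem (Theorem~\ref{th-cramer}) to produce a non-zero signed solution of $V_{n+1}x\balance v_{n+1}$, which yields the dependence. Your padding trick replaces all of this by a single application of Corollary~\ref{TGM} to the square matrix $\tilde A$ with an adjoined $\ooo$ row; the computation $\detp{\tilde A}=\zero\balance\ooo$ and the projection back to the first $n$ coordinates are both fine, and your fallback through Theorem~\ref{TGMa} plus Property~\ref{LRRR} is exactly the mechanism by which the paper itself deduces Corollary~\ref{TGM} from Theorem~\ref{TGMa}. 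What your route buys is brevity: no case split and no use of Theorem~\ref{th-cramer}. What it costs is that you invoke Corollary~\ref{TGM} only in its most degenerate regime, $\detp{\tilde A}=\zero$ (no permutation has finite weight), where its conclusion for $\tilde A$ is, coordinate for coordinate, the Radon statement itself; your proof is thus the observation that Radon is the special case of the Gondran--Minoux theorem for matrices with a $\ooo$ row. Inside this paper that is legitimate, since Corollary~\ref{TGM} and Theorem~\ref{TGMa} are stated, via the cited references, without any nondegeneracy hypothesis; but be aware that the elimination machinery underlying them, e.g.\ Theorem~\ref{th-jac}, is stated under the assumption $\detp{A}\neq\zero$, so the zero-determinant case you lean on is precisely the one requiring the extension carried out in the cited references --- in a development where that case were itself proved by a Radon-type induction, your reduction would be circular. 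The paper's two-case proof confines the use of Corollary~\ref{TGM} to the situations where it is unavoidable and otherwise runs on the Cramer machinery that is proved in full within the paper.
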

\d
Let $v_1,\ldots, v_{n+1}$ denote vectors of $\R^n$, and let $V_i$ denote the matrix constructed by concatenating all these (column) vectors but the $i$th.

Assume first that there exists $i$ such that $\detp {V_i}\balance \ooo$. Then,
by Corollary~\ref{TGM}, the columns of $V_i$ are linearly dependent in the sense of Gondran and Minoux. A fortiori, $v_1,\ldots,v_{n+1}$ are linearly dependent
in this sense.

Assume now that all the determinants  $\detp {V_i}$ are unbalanced ($i=1,\ldots,n+1$). Then by the Cramer rule (Theorem~\ref{th-cramer}), the system
$V_{n+1}x\balance v_{n+1}$ admits a (unique) non-zero signed solution $x$,
and so the vectors  $v_1,\ldots,v_{n+1}$ are linearly dependent in the sense of Gondran and Minoux.
\dd

The Cramer theorems~\ref{th-cramer} and~\ref{th-cramerI} raise the issue
of computing determinants or permanents
in the semirings $\rmax$, $\smax$ or $\Ti$. First, we observe
that if $A\in \MM_n(\rmax)$, computing $\per A$ is nothing
but the classical optimal assignment problem, which can be solved in polynomial
time. Hence, all the Cramer permanents of $(A,b)$ (for some $b\in\rmax^n$)
together with $\per A$ 
could be obtained by solving $n+1$ assignment problems.
Alternatively, the method of Richter-Gebert, Sturmfels, and Theobald~\cite{RGST}
shows
that one can compute at once all the Cramer permanents together
with the permanent of $A$, up to a common additive constant, by solving
a single network flow problem. 
The Jacobi algorithm of M.~Plus~\cite{Plus} leads to
a third method. 
In~\cite{AGG2}, the latter method is further discussed and compared
with the one of~\cite{RGST}.

The compution of determinants over $\Ti$ or $\smax$ reduces
to a purely combinatorial problem, thanks to the following
technique. 
Let $A\in \MM_n(\rmax)$. By applying a standard assignment algorithm,
like the Hungarian algorithm, as soon as $\per A\neq\zero$,
we get optimal dual variables $u_i,v_j\in \mathbb{R}$, for $1\leq i,j\leq n$, which are such that
\[
A_{ij}\leq u_i + v_j \enspace ,\qquad \text{and}\qquad 
\per A=\sum_i u_i +\sum_j v_j \enspace .
\]
By the complementary slackness property, the optimal permutations $\sigma$
are characterized by the condition that $A_{i\sigma(i)}=u_i+v_{\sigma(i)}$.
After multiplying $A$ by a permutation matrix,
we may always assume that the identity is a solution of the
optimal assignment problem. 
Then, we define the digraph $G$ with nodes $1,\ldots,n$, and an arc from
$i$ to $j$ whenever $A_{ij}= u_i + v_j$.
Butkovi\v{c} proved two results which can be formulated
equivalently as follows.
\begin{theorem}[See~{\cite{butkovip94} and~\cite{butkovic95} }]\label{thbut}
Let $A\in \MM_n(\rmax)$, and assume that $\per A\neq\zero$.
Then, checking whether the 
optimal assignment problem has at least two optimal solutions reduces
to finding a cycle in the digraph $G$, whereas checking whether
it has at least two optimal solutions of a different parity reduces
to finding an (elementary) even cycle in $G$.
\end{theorem}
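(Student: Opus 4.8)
The plan is to set up a bijective dictionary between the optimal permutations of the assignment problem and the ``cycle covers'' of the digraph $G$, and then to read off both assertions from the way a permutation decomposes into disjoint cycles. The only algebraic inputs are complementary slackness and the formula for the signature of a cyclic permutation.

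First I would record the basic correspondence. By complementary slackness, a permutation $\sigma\in\allperm_n$ is optimal if and only if $A_{i\sigma(i)}=u_i+v_{\sigma(i)}$ for every $i$, that is, if and only if every arc $i\to\sigma(i)$ belongs to $G$. Reading $\sigma$ through its functional graph, an optimal $\sigma$ is thus precisely a spanning subgraph of $G$ in which each vertex has in-degree and out-degree one; equivalently, it is a partition of $\{1,\dots,n\}$ into the cycles of $\sigma$, each of which is an elementary directed cycle of $G$. Because the identity has been normalized to be optimal, every self-loop $i\to i$ lies in $G$, and the identity corresponds to the cycle cover consisting of all $n$ self-loops. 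This dictionary is the whole structural content; the two assertions then follow by elementary counting.

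For the first assertion I would argue both directions. If $G$ contains an elementary cycle $C=(i_1\to i_2\to\cdots\to i_k\to i_1)$ of length $k\ge 2$, I define $\sigma$ to act as this cyclic permutation on $\{i_1,\dots,i_k\}$ and as the identity elsewhere; every arc it uses lies in $G$ (the cycle arcs by hypothesis, the remaining self-loops by normalization), so $\sigma$ is optimal, and $\sigma\neq\mathrm{id}$. Conversely, any optimal permutation other than the identity has at least one cycle of length $\ge 2$, which is by construction an elementary cycle of $G$. Hence the assignment problem has at least two optimal solutions if and only if $G$ possesses a cycle of length $\ge 2$.

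For the second assertion the key is the signature. A single $k$-cycle has signature $(-1)^{k-1}$, so it is an \emph{odd} permutation exactly when $k$ is \emph{even}; consequently $\sgn(\sigma)=\prod_j(-1)^{k_j-1}$, the product being over the cycle lengths $k_j$ of $\sigma$, and $\sigma$ is odd precisely when it has an odd number of even-length cycles, in particular at least one. Since the identity is an even optimal solution, ``two optimal solutions of a different parity'' is equivalent to the existence of a single odd optimal solution. If $G$ has an elementary even cycle, the permutation built from it as above is optimal and odd; conversely, an odd optimal permutation must contain an even-length cycle, which is an elementary even cycle of $G$. The step requiring the most care is exactly this parity bookkeeping: translating ``different parity between two solutions'' into ``existence of one odd optimal solution'' via the normalization, and matching the parity of a cyclic permutation to the parity of its cycle length. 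Once these are in place, the equivalence with the existence of an elementary even cycle in $G$ is immediate.
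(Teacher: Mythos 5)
The paper itself contains no proof of this theorem: it is quoted from Butkovi\v{c}'s papers \cite{butkovip94,butkovic95}, and the text only supplies the surrounding setup (dual variables $u_i,v_j$, the complementary slackness characterization of optimal permutations, the normalization making the identity optimal, and the definition of the digraph $G$). So there is nothing internal to compare against; what can be said is that your argument is correct and complete, and it is exactly the natural proof behind the cited results. The dictionary ``optimal permutations $=$ cycle covers of $G$'' is immediate from the complementary slackness characterization stated in the paper (including its converse: if every arc $i\to\sigma(i)$ lies in $G$, summing the equalities $A_{i\sigma(i)}=u_i+v_{\sigma(i)}$ shows $\sigma$ is optimal), the normalization guarantees all self-loops are present so that a single elementary cycle of length $\ge 2$ extends to an optimal permutation, and the signature formula $\sgn(\sigma)=\prod_j(-1)^{k_j-1}$ correctly converts ``two optimal solutions of different parity'' (via the even, optimal identity) into ``one odd optimal permutation'' and hence into an elementary even cycle. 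The one reading you implicitly fixed, and rightly so, is that since every self-loop belongs to $G$ after normalization, ``finding a cycle in $G$'' in the statement must mean a cycle of length at least $2$ (equivalently, a cycle of $G$ with self-loops deleted); with that convention both of your equivalences establish precisely the claimed reductions.
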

The existence of a cycle can be checked in linear
time (e.g.~by a depth first search algorithm).
The polynomial time character of the even cycle problem
is a deep result of Robertson, Seymour, and Thomas~\cite{RST99}.

If $A\in \MM_n(\rmax)$, one can readily design from the first part 
of Theorem~\ref{thbut} a polynomial time
algorithm to compute the determinant of the matrix $\imath(A)$,
i.e., the determinant of $A$ interpreted in the semiring $\Ti$.
The second part of this theorem also leads
to a polynomial time algorithm to compute the determinant of $A$,
interpreted in the semiring $\smax$.
The determinant of a matrix in $\MM_n(\Ti)$ can be computed
in polynomial time along the same lines. More generally,
as is detailed in~\cite{AGG2},
computing the determinant of a matrix
in $\MM_n(\smax)$ reduces to checking whether all the terms
of the expansion of the determinant of a matrix with entries
in $\{\pm 1,0\}$ have the same sign (here, the determinant
is evaluated in the usual algebra). This problem also reduces
to the even cycle problem. It has been considered
within the theory of ``sign solvable systems''~\cite{shader}.
The latter deals with those linear systems having solutions the sign of which
is uniquely determined by the sign of the coefficients. 
We refer the reader to~\cite{ButGa} for a further discussion
of the relation between the symmetrization of the max-plus semiring
and the sign solvability theory.


\section{Rank functions}\label{S2}

In this section we review several notions of rank for matrices over semirings.
Different points of views, which yield equivalent definitions in the case of fields, lead to different notions in the case of semirings.
Indeed, we may define the rank in terms of matrix factorization,
in terms of determinant, or in terms of independence of the rows
or columns.

\begin{definition} \label{Df} Let $\SS$ be any semiring.
The \NEW{factor rank} $\facrk(A)$ of a matrix $A\in \MM_{mn}(\SS)$ is
the smallest integer $k$ such that $A=BC$ for some matrices $B\in \MM_{mk}(\SS)$ and $C\in \MM_{kn}(\SS)$.
\end{definition}
By convention, a matrix with zero coefficients has factor rank $0$.

Note that the factor rank of $A$ is equal to the minimum
number of matrices of factor rank 1 the sum of which is equal to~$A$. Also for any submatrix $A'$ of $A$ we have $\facrk(A')\le \facrk(A)$, see~\cite{BP4}. See also \cite{CR} for more details. 
The name Schein rank has also been used for the factor rank,
particularly in the case of Boolean matrices~\cite{kim82}. For matrices over the max-plus algebra, the factor rank is also known as the Barvinok rank, since
it appeared in a work of Barvinok, Johnson and Woeginger on the MAXTSP problem~\cite{barvinok}. 

\begin{definition}
\NEW{The tropical rank} of $A\in \MM_{mn}(\SS)$, denoted by $\trop(A)$, is the biggest integer $k$ such that $A$ has a tropically non-singular $k\times k$-submatrix.
\end{definition}
\begin{definition}
The \NEW{determinantal rank} of $A\in\MM_{mn}(\SS)$, denoted by $\rkdet (A)$, is the biggest integer $k$ such that there exists a $k\times k$-submatrix $A'$ of $A$ with $| A'| ^+\ne | A'| ^-$.
\end{definition}
Observe that a matrix $A\in \MM_n(\rmax)$ has tropical rank $n$
if and only if $\detp{A}$, when interpreted in the extended tropical
semiring $\Ti$ (i.e., $\detp{\imath(A)}$) is an invertible element of $\Ti$,
whereas $A$ has determinantal rank $n$ if and only if $\detp{A}$, when evaluated
in the symmetrized tropical semiring $\smax$, is an invertible element of $\smax$. 
\begin{re}\label{rk-tropdet}
It follows readily from the definition that $ \trop(A)\le  \rk_{\det}(A)$
for all $A\in\Mnm(\R)$.
\end{re}
The following rank notion is usually considered in combinatorics.
\begin{definition}
The \NEW{term rank} of a matrix $A\in\Mnm(\SS)$, denoted $\termrk(A)$
is defined as the minimal number of lines (rows and columns) necessary to cover all the non-zero elements of~$A$, or equivalently (by K\"onig theorem) as the maximal number of non-zero entries of $A$ no two of which lie on the same row or column.
\end{definition}
\begin{re}\label{rk-sasha}
It is proved in~\cite[Prop.~3.1]{BG} that the inequality 
\[
\facrk(A)\leq \termrk(A)
\]
holds for matrices with entries in an arbitrary semiring.
\end{re}
\bigskip
We now turn to the definitions of matrix rank, based on the different notions of linear  independence, introduced in Section~\ref{S1}.

\begin{definition} \label{Dmr}
The \NEW{maximal row rank} of a matrix $A\in\Mnm(\SS)$
in the weak, Gondran-Minoux, or tropical sense (see Definitions~\ref{D1}, \ref{D2}, and~\ref{D3}), denoted respectively by $\mrw(A)$, $\mrgm(A)$, and $\mrtrop(A)$, is the maximal number $k$ such that $A$ contains $k$ weakly, Gondran-Minoux, or tropically, linearly independent rows, respectively.
\end{definition}
\begin{re}\label{rk-obviousineq}
Due to the implications between the different independence notions, we 
readily get $\mrtrop(A)\leq \mrgm(A)\leq \mrw(A)$.
\end{re}
\begin{re}[\bf Monotonicity of rank functions]\label{monrank}
Let $A\in \Mnm(\SS)$ and let $B$ be a submatrix of $A$. Then
\begin{enumerate}
\item $\facrk(B)\leq \facrk(A)$;
\item $\trop(B)\le \trop(A)$;
\item $\rk_{{\det}} (B)\le \rk_{{\det}}(A)$;
\item $\mr_{\star}(B)\le  \mr_{\star}(A)$ for $\star\in \{\mathrm{w},\mathrm{GM},\mathrm{t}\}$. 
\end{enumerate}
Indeed, the first three inequalities follow immediately from the definitions.
For the last one, say in the case of the Gondran-Minoux
independence, we note that by the definition, if some
vectors constitute a linearly
dependent family, then so do their restriction
to an arbitrary set of coordinates. Thus, every family of linearly independent
rows of $B$ yields a family of linearly independent rows of $A$.
Since $ \mrgm$ is the maximal number of vectors of such families,
the result follows.
\end{re}

\begin{definition} \label{Dr}
 The \NEW{row rank} of a matrix $A\in \MM_{m,n}(\SS)$, denoted by $\ror(A)$, is the 
weak dimension $\wdim(\rosp(A))$ 
of the linear span $\rosp(A)$ of the rows of $A$.
\end{definition}

\begin{re}
It is proved in \cite{BP4} that $\ror(B)\le \ror(A)$ if $B$ is obtained by deleting some columns of $A$. However since the weak dimension is not in general increasing (see Remark~\ref{re:weaknonincr}), we may have $\ror(C)>\ror(A)$,
for matrices $C$ obtained by deleting some rows of $A$,
as is shown in Example~\ref{Xi} below.
\end{re}

%
\begin{ex} \label{Xi}
Consider the matrix
$$
 Y = \left[ \begin{array}{ccc}
\ooi& \ooo&  \ooo\\
 \ooo&\ooi&\ooo\\
\ooo&\ooo&\ooi\\
\ooi&\ooo&\ooi\\
\ooi&\ooi&\ooo\end{array}\right]\in \MM_{5,3}(\R)$$
and its proper submatrix
$$
X = \left[ \begin{array}{ccc}
 \ooo&\ooi&\ooo\\
\ooo&\ooo&\ooi\\
\ooi&\ooo&\ooi\\
\ooi&\ooi&\ooo\end{array}\right] \in \MM_{4,3}(\R).
$$
Since $\rosp(Y)=\R^3$ and the rows of $X$ are weakly independent,
we see that $\ror(Y)=3<4=\ror(X)$.
\end{ex}

\begin{lemma} \label{C:ext_rays}
The row rank of a matrix $A\in \Mnm(\R)$ is equal to the number of extremal rays of the row space $\rosp(A)$ of $A$.
\end{lemma}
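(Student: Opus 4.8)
The plan is to reduce the statement directly to Corollary~\ref{C1:ext_rays}. First I would observe that the row space $\rosp(A)$ is a \emph{finitely generated} subsemimodule of $\R^n$, being by Definition~\ref{Dr} the linear span of the finitely many rows of $A$. A finitely generated subsemimodule of $\R^n$ is closed in the natural topology (finitely generated max-plus cones are closed; this is part of the structure theory of Moller~\cite{Mol} and Wagneur~\cite{Wag}, see also~\cite{CGB}). Hence $M:=\rosp(A)$ is a closed subsemimodule of $\R^n$ and satisfies the hypotheses of Theorem~\ref{T:ext_rays} and Corollary~\ref{C1:ext_rays}.

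Next I would invoke Corollary~\ref{C1:ext_rays}: every weakly independent generating family of $M$ is obtained by picking exactly one non-zero vector in each extremal ray of $M$. In particular, each such family is in bijection with the set of extremal rays of $\rosp(A)$, so any two weakly independent generating families have the same cardinality, namely the number of extremal rays. This number is finite, since $M$ is finitely generated. Moreover, at least one such family exists: by Theorem~\ref{T:ext_rays} the extremal generators already generate $M$, and selecting one non-zero representative per extremal ray yields a generating family, which is weakly independent by Corollary~\ref{C1:ext_rays}.

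Finally I would conclude from the definitions. By Definition~\ref{Dr}, $\ror(A)=\wdim(\rosp(A))$, which by Definition~\ref{defi-weakdim} is the minimum cardinality of a weakly independent generating family of $\rosp(A)$. Since such families exist and all have the same cardinality, equal to the number of extremal rays of $\rosp(A)$, this minimum is exactly that number, which is the desired equality.

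The only delicate point is the closedness of $\rosp(A)$ needed to apply Corollary~\ref{C1:ext_rays}; this is where I would rely on the cited structure theory asserting that finitely generated subsemimodules of $\R^n$ are closed (and have finitely many extremal rays). Everything else is a routine translation between the definition of $\ror$, the weak dimension, and the enumeration of weakly independent generating families provided by Corollary~\ref{C1:ext_rays}.
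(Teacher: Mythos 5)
Your proof is correct and takes essentially the same route as the paper, whose entire proof of this lemma is the single line ``This follows from Corollary~\ref{C1:ext_rays}.'' You merely make explicit what the paper leaves implicit --- the closedness of the finitely generated semimodule $\rosp(A)$ needed to invoke Corollary~\ref{C1:ext_rays}, the existence of a weakly independent generating family, and the translation through Definitions~\ref{Dr} and~\ref{defi-weakdim} --- all of which matches the discussion the paper gives immediately after Corollary~\ref{C1:ext_rays}.
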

\d This follows from Corollary~\ref{C1:ext_rays}.
\dd

In the theory of general semirings, the following rank function is considered:

\begin{definition} The \NEW{spanning row rank} of 
a matrix $A\in \Mnm (\SS)$, denoted $\sr(A)$, is the minimal number of rows of $A$ which generate over $\SS$ the row space of $A$.
\end{definition}

\begin{re}\label{rk-general}
Note that for matrices over general semirings, we have that $\ror(A)\le \sr(A)\le \mrw(A)$. There are semirings such that there exist matrices $A$ for which $\ror(A)<\sr(A)<\mrw(A)$, see~\cite{BG}.
However over max-plus algebra the situation with the first two functions is different.
\end{re}

\begin{theorem} \label{TCGB}
The identity $\ror(A)=\sr(A)$ holds for all $A\in \MM_{n,m}(\rmax)$.
\end{theorem}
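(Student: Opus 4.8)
The plan is to establish the two inequalities $\ror(A)\le \sr(A)$ and $\sr(A)\le \ror(A)$ separately. The first holds over every semiring by Remark~\ref{rk-general}, so the whole content is the reverse inequality $\sr(A)\le \ror(A)$, which is specific to $\rmax$. Since $M:=\rosp(A)$ is spanned by the finitely many rows of $A$, it is a finitely generated, hence closed, subsemimodule of $\rmax^n$, so that Theorem~\ref{T:ext_rays}, Corollary~\ref{C1:ext_rays} and Lemma~\ref{C:ext_rays} all apply to it. In particular, $\ror(A)=\wdim(M)$ equals the number $k$ of extremal rays of $M$.

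The key step is to show that every extremal generator $u$ of $M$ is a scalar multiple of one of the rows of $A$. Writing $u=\bigoplus_{i}\lambda_i r_i$ as a max-plus combination of the rows $r_1,\dots,r_m$ of $A$ and discarding the summands equal to $\ooo$, I would split off the first term, $u=(\lambda_1 r_1)\oplus\bigl(\bigoplus_{i\ge 2}\lambda_i r_i\bigr)$, noting that both summands lie in $M$. The defining property of an extremal generator then forces $u=\lambda_1 r_1$ or $u=\bigoplus_{i\ge 2}\lambda_i r_i$, and an induction on the number of (non-zero) terms yields $u=\lambda_{i_0} r_{i_0}$ for some index $i_0$, with $\lambda_{i_0}\ne \ooo$.

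To finish, I would use that every $\rmax$-scalar other than $\ooo$ is invertible: from $u=\lambda_{i_0} r_{i_0}$ with $\lambda_{i_0}$ invertible, the genuine row $r_{i_0}$ lies in, and generates, the extremal ray through $u$. Choosing one such row of $A$ in each of the $k$ extremal rays produces $k$ rows whose span contains every extremal generator of $M$; since by Theorem~\ref{T:ext_rays} the extremal generators span $M$, these $k$ rows span $M=\rosp(A)$. Hence $\sr(A)\le k=\ror(A)$, which together with the reverse inequality proves the identity.

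The subtle point, and the main obstacle, is precisely that $\sr$ counts rows of $A$ rather than arbitrary vectors: the argument goes through because the extremal generators are not merely proportional to rows of $A$ but, thanks to the invertibility of non-zero $\rmax$-scalars, can be replaced by those very rows without changing the ray. Verifying the key step via the extremality axiom, and checking the closedness of $\rosp(A)$ so that the results on extremal rays are available, are the only places requiring care.
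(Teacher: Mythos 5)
Your proof is correct and follows essentially the same route as the paper's: the paper likewise dispatches $\ror(A)\le\sr(A)$ as the generally valid inequality and proves $\sr(A)\le\ror(A)$ by observing that every extremal ray of $\rosp(A)$ is generated by a row of $A$ (the fact you verify via the splitting/induction argument) and then invoking Corollary~\ref{C1:ext_rays}. Your write-up merely fills in the details the paper leaves implicit, including the closedness of the finitely generated semimodule and the use of invertibility of non-zero scalars.
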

\d By definition, every extremal rays of  $\rosp(A)$ is generated by
 one row of $A$, hence from Corollary~\ref{C1:ext_rays}, there exists a subset of the set of rows of $A$ which is a weakly independent generating family of $\rosp(A)$. This shows that $\sr(A)\leq \ror(A)$, and since the other inequality is always true, we get the equality.
\dd

\begin{ex} \label{ex:mr_w}
For any positive integer $n$ let us consider the matrix $$A=\left[ \begin{array}{ccccccc} 
\ooi & \ooo  & \ooo & x_1 & x_2 &\ldots & x_n\\
\ooo &\ooi & \ooo & \ooi & \ooi &\ldots & \ooi\\
\ooo  & \ooo & \ooi & -x_1 & -x_2 &\ldots & -x_n\end{array} \right]^t\in {\cal M}_{n+3,3}(\R),$$
where $x_1,\ldots ,x_n \in \R$ are pairwise different and different from $\ooi,\ooo$.
Then it is easy to see that $\ror(A)=3$, but $\mrw(A)=n$, cf. Example~\ref{ExD1D2}.
\end{ex}

This example shows that the rank $\mrw$ has somehow a pathological behavior.

\begin{definition} \label{Der}
For a matrix $A\in \MM_{m,n}(\SS)$, we define the \NEW{enveloping row rank}
of $A$ with respect to any linear dependence notion as the corresponding
enveloping dimension (see Definition~\ref{defi-envdim})
of the subset of $\SS^n$ obtained from the rows of $A$, or equivalently 
of the subsemimodule $\rosp(A)$ of $\SS^n$.
We denote respectively by $\erw(A)$, $\ergm(A)$, and $\ertrop(A)$,
the enveloping row rank with respect to the weak, Gondran-Minoux and tropical linear dependence.
\end{definition}

\begin{re}\label{re-env-rank}
 From Remark~\ref{re-env}, we get that
$\erw(A)\leq \ergm(A)\leq \ertrop(A)\leq n$, and that
$\erw(A)\leq m$.
\end{re}
The following elementary observation shows
that the enveloping rank with respect to the weak linear
dependence notion is nothing but the factor rank.
\begin{prop} \label{P_er=f}
 Let $A\in \Mnm(\SS)$. Then
$\erw(A) = \facrk(A)$.
\end{prop}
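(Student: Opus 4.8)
The plan is to establish the two inequalities $\erw(A)\leq \facrk(A)$ and $\facrk(A)\leq \erw(A)$ separately, exploiting the dictionary between a rank-$k$ factorization of $A$ and a size-$k$ family enveloping the row space $\rosp(A)$. The starting point is the reformulation of $\erw$ coming from Remark~\ref{re-env}: since weak dependence lets one discard any redundant vector, $\erw(A)=\edimw(\rosp(A))$ equals the minimal cardinality of an enveloping family of $\rosp(A)$ in $\SS^n$, with no independence requirement. Moreover, since $\rosp(A)$ is generated by the rows of $A$, a finite family $P\subseteq\SS^n$ envelopes $\rosp(A)$ if and only if every row of $A$ lies in $\langle P\rangle$.

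For $\erw(A)\leq \facrk(A)$, I would take an optimal factorization $A=BC$ with $B\in\MM_{m,k}(\SS)$, $C\in\MM_{k,n}(\SS)$ and $k=\facrk(A)$. Writing $C_1,\ldots,C_k$ for the rows of $C$, the matrix identity $A=BC$ says exactly that the $i$-th row of $A$ equals $\sum_{j=1}^k B_{ij}C_j$, so each row of $A$ lies in $\langle\{C_1,\ldots,C_k\}\rangle$. Hence $\{C_1,\ldots,C_k\}$ is an enveloping family of $\rosp(A)$ of cardinality $k$, giving $\erw(A)\leq k$.

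For the reverse inequality, I would take an enveloping family $\{p_1,\ldots,p_k\}\subseteq\SS^n$ of $\rosp(A)$ with $k=\erw(A)$. By the reformulation above, each row $a_i$ of $A$ is a linear combination $a_i=\sum_{j=1}^k B_{ij}p_j$ for some scalars $B_{ij}\in\SS$. Assembling the $p_j$ as the rows of a matrix $C\in\MM_{k,n}(\SS)$ and the coefficients into $B=(B_{ij})\in\MM_{m,k}(\SS)$, these relations are precisely the entries of the product $A=BC$; thus $A$ admits a factorization through $\SS^k$ and $\facrk(A)\leq k=\erw(A)$.

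There is no serious obstacle here: the content is entirely the translation between ``$A=BC$'' and ``the rows of $C$ (resp.\ the family $P$) span the rows of $A$'', which holds over any semiring since neither direction uses subtraction. The only point requiring a line of justification is the passage from the independence-constrained Definition~\ref{defi-envdim} to the unconstrained minimum over enveloping families, which is exactly the content of Remark~\ref{re-env} for the weak notion; I would simply cite it.
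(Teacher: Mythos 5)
Your proof is correct and follows essentially the same route as the paper's: both directions rest on the dictionary between a factorization $A=BC$ through $\SS^k$ and a $k$-element family whose span contains the rows of $A$, with Remark~\ref{re-env} supplying the passage from the independence-constrained Definition~\ref{defi-envdim} to the unconstrained minimum over enveloping families. The only cosmetic difference is that the paper phrases the inequality $\erw(A)\le\facrk(A)$ via monotonicity of $\edimw$ together with the bound $\edimw(V)\le\card{V}$, whereas you invoke the reformulated minimum directly; these are the same facts from Remark~\ref{re-env}.
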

\d Let us check that $\erw(A)\le \facrk(A) $. We set $f:=\facrk(A)$. Then $A=BC$ for some $B\in {\cal M}_{m\,f},$ $C\in {\cal M}_{f\,n}$. This means  that the rows of $A$ are elements of the row space of $C$, hence
$\erw(A)=\edimw(\rosp(A))\leq \edimw(\rosp(C))$ which is 
equal to the enveloping dimension of the set of rows of $C$, and
since the cardinality of this set is less or equal to $f$, we get that
$\erw(A)\leq f$ by Remark~\ref{re-env-rank}.

Let us show the opposite inequality. Let $r:=\erw(A)$. By definition, there exist row vectors ${\bf v}_1,\ldots, {\bf v}_r\in \SS^n$ generating the rows of $A$. Hence there exist elements $\alpha_{i,j}\in\SS$, $i=1,\ldots, m,$ $j=1,\ldots, r$, such that the $i$th row of $A$ is equal to $\sum\limits_{j=1}^r \alpha_{i,j}{\bf v}_j $. 
Thus $A=BC$, where $B=[\alpha_{i,j}]$ an $C$ is the matrix with rows ${\bf v}_1$, \ldots, ${\bf v}_r$.
\dd

\bigskip

The corresponding ranks can be defined by considering columns
instead of rows.
For instance, the column rank $\cor(A)$ of $A$ is the weak dimension of the linear span $\cosp(A)$ of the columns of $A$, i.e., $\cor(A)=\ror(A^t)$. Similarly, $\mctrop(A):=\mrtrop(A^t$), $\ecw(A)=\edimw(\cosp(A))=\erw(A^t)$, etc. 
The ranks with respect to rows and columns may differ.
In particular, the matrix $X$ from Example~\ref{Xi} is such that $\cor(X)\ne \ror(X)$ and Proposition~\ref{mrmcGM} below shows that we may have $\mrgm(X)\neq \mcgm(X)$.

\bigskip

\begin{re}\typeout{TODO}
When $\SS=\rmax$, Develin, Santos, and Sturmfels~\cite{DSS} considered
an additional rank, the Kapranov rank, which is defined by
thinking of max-plus (or rather min-plus) scalars as images of Puiseux series by
a non-archimedean valuation. This notion is
of a different nature, and therefore is out of the scope of this paper.
\end{re}



\section{Comparison of rank functions}
We now give the main comparison results for rank functions over $\rmax$.
The proof of these rely on the results on
max-plus linear systems and in particular on the ``Cramer rules''
established in Section~\ref{sec-cramer}.

%
%
%

\begin{lemma} \label{LO1new}
For any $A\in\Mnm(\R)$, we have
$\rkdet(A)\leq \mrgm(A)$
and $\trop (A)\leq \mrtrop(A)$.
\end{lemma}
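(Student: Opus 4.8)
The plan is to reduce both inequalities to a statement about a single maximal nonsingular square submatrix, and then to transport independence from that submatrix to $A$ by restriction. Assume $k\geq 1$ and pick a $k\times k$ submatrix $A'$ of $A$ witnessing the left-hand rank: for the first inequality, $k=\rkdet(A)$ and $|A'|^+\neq|A'|^-$; for the second, $k=\trop(A)$ and $A'$ is tropically nonsingular. Such an $A'$ is obtained by selecting $k$ rows of $A$ and discarding all but $k$ columns, so it suffices to prove that the rows of $A'$ are Gondran--Minoux (resp.\ tropically) independent. Indeed, by the restriction property recalled in Remark~\ref{monrank}, a Gondran--Minoux (resp.\ tropical) dependence among the corresponding full rows of $A$ would restrict, with the same scalars, to a dependence among the rows of $A'$; hence independence of the rows of $A'$ exhibits $k$ independent rows of $A$, giving $\mrgm(A)\geq k$ (resp.\ $\mrtrop(A)\geq k$).

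It remains to handle the square case. I will use that $|A'|^+$, $|A'|^-$, and hence $\detp{A'}$ and $\per A'$, together with the properties of being balanced or tropically singular, are invariant under transposition, so that I may pass freely to $(A')^t$. For the Gondran--Minoux part this is immediate from the Gondran--Minoux theorem (Corollary~\ref{TGM}) applied to $(A')^t$: the columns of $(A')^t$, which are exactly the rows of $A'$, are Gondran--Minoux dependent if and only if $\detp{(A')^t}=\detp{A'}\balance\ooo$, i.e.\ $|A'|^+=|A'|^-$. Since $A'$ was chosen with $|A'|^+\neq|A'|^-$, its rows are Gondran--Minoux independent, which yields $\rkdet(A)\leq\mrgm(A)$.

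The tropical square case is the step requiring the most care, since there is no ready-made tropical analogue of Corollary~\ref{TGM}; I would instead extract the needed implication from the tropical Cramer theorem (Theorem~\ref{th-cramerI}). Arguing by contradiction, suppose the rows $r_1,\dots,r_k$ of $A'$ are tropically dependent. Unwinding Definition~\ref{D3} produces a nonzero $x\in\rmax^k$ such that in each coordinate of $\bigoplus_i x_i r_i$ the maximum is attained at least twice; by the translation recorded after Definition~\ref{DllI}, and viewing the rows of $A'$ as the columns of $(A')^t$, this is exactly the balance $\imath((A')^t)\,\imath(x)\balance\zero$ in $\Ti$, with $\imath(x)$ a nonzero real vector. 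But $A'$ was chosen tropically nonsingular, so $\detp{\imath((A')^t)}=\detp{\imath(A')}$ is invertible in $\Ti$; applying the second assertion of Theorem~\ref{th-cramerI} with $b=\zero$ then forces $\hat x=\zero$ to be the unique real solution of $\imath((A')^t)y\balance\zero$, contradicting the nonzero real solution $\imath(x)$. Hence the rows of $A'$ are tropically independent and $\trop(A)\leq\mrtrop(A)$. The only routine verifications left are the transpose-invariance of $|\cdot|^{\pm}$ and the dictionary between Definition~\ref{D3} and balances over $\Ti$, both of which are already available in the text.
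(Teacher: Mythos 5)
Your proof is correct and takes essentially the same route as the paper: pick a maximal nonsingular $k\times k$ submatrix $A'$, show its rows are independent by the uniqueness (signed/real) part of the Cramer theorems with $b=\zero$ (Theorem~\ref{th-cramer} over $\smax$, Theorem~\ref{th-cramerI} over $\Ti$), and then transport independence to the corresponding full rows of $A$ by the restriction argument of Remark~\ref{monrank}. The only cosmetic difference is that for the Gondran--Minoux half you invoke Corollary~\ref{TGM} applied to $(A')^t$ instead of Theorem~\ref{th-cramer} directly, but the direction of Corollary~\ref{TGM} you use is itself obtained from the $b=\zero$ case of that Cramer theorem, so the underlying argument coincides with the paper's.
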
 
\begin{proof}
Let $k:=\rkdet(A)$, and let $A'$ denote a $k\times k$ submatrix of $A$
such that $\detp{A}$ is invertible in $\smax$. Then, it follows from
the Cramer theorem~\ref{th-cramer} that there is no signed row vector
$x\neq\zero$ such that $xA'\balance \zero$. Hence, the rows of $A'$, and a fortiori the corresponding rows of $A$, are linearly
independent in the sense of Gondran and Minoux. This shows that
$\rkdet(A)\leq \mrgm(A)$. A similar argument with
$\Ti$ instead of $\smax$, and Theorem~\ref{th-cramerI} instead
of Theorem~\ref{th-cramer}, shows that $\trop(A)\leq \mrtrop(A)$.
\end{proof}

The second inequality in Lemma~\ref{LO1new} also follows
from a result of Izhakian~\cite[Th.~3.4]{Izh}. 
Moreover, Izhakian proved the following theorem.
\begin{theorem}[{\cite[Th.~3.6]{Izh}}]\label{th-add-izh}
If $A\in\M(\rmax)$, then $\trop(A)=n$ if and only if $\mrtrop(A)=n$.
\end{theorem}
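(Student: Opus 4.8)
The plan is to prove the two implications separately and to reduce the substantive one to a homogeneous Cramer statement over the extended tropical semiring $\Ti$. The implication $\trop(A)=n\implies\mrtrop(A)=n$ is immediate: by Lemma~\ref{LO1new} we have $\trop(A)\le\mrtrop(A)$, and since $A$ has only $n$ rows we have $\mrtrop(A)\le n$, so $\trop(A)=n$ forces $\mrtrop(A)=n$. For the converse I would argue by contraposition, showing that if $A$ is tropically singular, i.e.\ $\trop(A)<n$, then its $n$ rows are tropically linearly dependent and hence $\mrtrop(A)<n$.

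The key is to translate both conditions into $\Ti$. On one hand, $\trop(A)=n$ means the whole matrix $A$ is tropically non-singular, which, as observed earlier, is equivalent to $\detp{\imath(A)}$ being an invertible (equivalently, unbalanced) element of $\Ti$; since the permanent is invariant under transposition this is the same as $\detp{\imath(A^t)}$ being unbalanced. On the other hand, by Definition~\ref{D3} the $n$ rows of $A$ are tropically dependent exactly when there is a nonzero $\alpha\in\rmax^n$ such that in every column $l$ the maximum $\max_i(\alpha_i+A_{il})$ is attained at least twice; computing in $\Ti$ this says precisely that $\imath(\alpha)\imath(A)$ is balanced entrywise, i.e.\ (transposing) that $\imath(A^t)\,y\balance\zero$ for the real vector $y=\imath(\alpha)^t\neq\zero$, which is Izhakian's linear dependence of the columns of $\imath(A^t)$ in the sense of Definition~\ref{DllI}. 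Writing $B:=\imath(A^t)$, the theorem thus reduces to the homogeneous equivalence
\[
\detp{B}\balance\zero\iff \exists\, y\in(\Ti^\vee)^n\setminus\{\zero\}\ \text{with}\ By\balance\zero\enspace,
\]
which is the $\Ti$-analogue of the Gondran--Minoux theorem (Theorem~\ref{TGMa} and Corollary~\ref{TGM}).

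It remains to establish this equivalence. The direction ``$\Leftarrow$'' is the easy one, and it is what underlies the forward implication above: if $By\balance\zero$ with $y$ real and nonzero, then the first part of Theorem~\ref{th-cramerI}, applied with right-hand side $\zero$, gives $\detp{B}\,y_j\balance(B\adj\zero)_j=\zero$ for all $j$; choosing $j$ with $y_j$ a nonzero real and hence invertible in $\Ti$, and multiplying by $y_j^{-1}$, yields $\detp{B}\balance\zero$. The direction ``$\Rightarrow$'' is the substantive one and is exactly the $\Ti$-version of Theorem~\ref{TGMa}. As noted in the excerpt, Properties~\ref{LRRR}--\ref{pty-weaktrans} and the determinantal identities transfer verbatim from $\smax$ to $\Ti$, so the existence proof carries over unchanged: one combines the $\Ti$-analogue of the Jacobi convergence theorem (the $\Ti$ version of Theorem~\ref{th-jac} established in~\cite{AGG2}) with the Gondran--Minoux dimension-reduction argument used in the proof of Corollary~\ref{LS}, which also handles the degenerate case $\detp{B}=\zero$. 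This existence statement is the main obstacle: when $\detp{B}$ is merely balanced, the Cramer rule provides no usable candidate solution (here $B\adj\zero=\zero$, and the uniqueness clause of Theorem~\ref{th-cramerI} does not apply because $\detp{B}$ is not invertible), so producing an explicit real nonzero $y$ with $By\balance\zero$ genuinely requires the iterative or combinatorial construction rather than a closed formula. Granting it, the displayed equivalence combines with the reductions above to give $\trop(A)=n\iff\mrtrop(A)=n$.
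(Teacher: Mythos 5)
Your preliminary reductions are all correct, and they match the paper's framework: the forward implication follows from Lemma~\ref{LO1new} exactly as you say; the dictionary between tropical singularity of $A$ and balancedness of $\detp{\imath(A)}$ in $\Ti$, and between tropical dependence of the rows of $A$ and the existence of a real $y\neq\zero$ with $\imath(A^t)y\balance\zero$, is the one the paper sets up; and your ``$\Leftarrow$'' argument via part~1 of Theorem~\ref{th-cramerI} is precisely the argument by which the paper proves Lemma~\ref{LO1new}. Be aware, though, that the paper gives no proof of this theorem at all: it is quoted from Izhakian~\cite[Th.~3.6]{Izh}. So the whole burden falls on your direction ``$\Rightarrow$'', the $\Ti$-analogue of Theorem~\ref{TGMa}, and that is where your write-up has a genuine gap.

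The problem is that the means you invoke cannot deliver that statement. First, the transfer remark you lean on (Properties~\ref{LRRR}--\ref{pty-weaktrans} plus determinantal identities suffice) is made by the paper only for the proof of Theorem~\ref{th-cramer}, which \emph{is} reproduced in Section~\ref{sec-cramer}; the proof of the ``if'' part of Theorem~\ref{TGMa} is not in the paper (it lives in~\cite{gaubert92a}), so ``the existence proof carries over unchanged'' is a claim about an argument you have not seen and cannot check from the text. Second, the ingredient you substitute for it, ``the Gondran--Minoux dimension-reduction argument used in the proof of Corollary~\ref{LS}'', does not exist: the proof of Corollary~\ref{LS} runs in the opposite direction, deriving the Radon theorem \emph{from} Corollary~\ref{TGM} (the homogeneous determinant-implies-dependence statement, i.e., exactly the kind of statement you need) together with the Cramer theorem; invoked here it is circular. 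Third, the $\Ti$-analogue of Theorem~\ref{th-jac} cannot close the gap by itself: with right-hand side $b=\zero$ it only produces the trivial solution $y=\zero$, and it does not apply at all in the degenerate case $\per B=\zero$. The real content is the construction of a \emph{nonzero} real solution when the permanent is balanced --- for instance by taking a maximal tropically nonsingular $r\times r$ submatrix, solving an inhomogeneous $r\times r$ system by the Jacobi theorem, and then showing the remaining rows balance automatically; note that the naive cofactor version of this fails, since Cramer determinants need not be real and balance is not transitive, so the pivoting must be done with care. That construction is the ``idea of Gondran and Minoux~\cite{GM2}'' the paper alludes to but never spells out, and it appears nowhere in your proposal. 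As written, the essential half of the theorem is assumed rather than proved; to repair this within the paper's framework you would have to reproduce that argument, or else do what the paper does and cite \cite{Izh} (or \cite{AGG2,AGG}) for it.
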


The following analogous result in $\S$ 
is an immediate consequence of Theorem~\ref{TGMa}.
\begin{theorem}\label{th-gmnew}
If $A\in\M(\rmax)$, then $\rk_{\det}(A)=n$ if and only if $\mrgm(A)=n$.
\qed
\end{theorem}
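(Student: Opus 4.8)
The plan is to reduce the statement to the Gondran--Minoux theorem (Corollary~\ref{TGM}) by unwinding the two ``$\mathrm{rank}=n$'' conditions and exploiting the invariance of the bideterminant under transposition. First I would record this invariance: for $A\in\M(\R)$ one has $\detp{A^t}^+=\detp{A}^+$ and $\detp{A^t}^-=\detp{A}^-$ in $\smax$. Indeed, in the defining sum over $\allperm_n$ the term attached to a permutation $\sigma$ for $A^t$ is $\prod_i (A^t)_{i\sigma(i)}=\prod_i a_{\sigma(i)i}=\prod_j a_{j\sigma^{-1}(j)}$, that is the term attached to $\sigma^{-1}$ for $A$; since $\sgn(\sigma)=\sgn(\sigma^{-1})$, even and odd permutations are preserved, so the positive and negative parts of the bideterminant agree for $A$ and $A^t$. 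In particular $\detp{A^t}\balance\ooo$ if and only if $\detp{A}\balance\ooo$.

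Next I would translate the two rank conditions into balances. Since $A$ is $n\times n$, its only $n\times n$ submatrix is $A$ itself, so by the definition of the determinantal rank we have $\rkdet(A)=n$ if and only if $\detp{A}^+\neq\detp{A}^-$. Because $\detp{A}=\detp{A}^+\ominus\detp{A}^-$ with $\detp{A}^+,\detp{A}^-\in\R$ sign-positive, this element is balanced exactly when $\detp{A}^+=\detp{A}^-$; hence $\rkdet(A)=n$ is equivalent to $\detp{A}\notbalance\ooo$. On the other side, as $A$ has exactly $n$ rows, $\mrgm(A)=n$ means precisely that the full family of $n$ rows of $A$ is linearly independent in the Gondran--Minoux sense, equivalently that the $n$ columns of $A^t$ are Gondran--Minoux independent.

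Finally I would chain these facts. Applying the Gondran--Minoux theorem (Corollary~\ref{TGM}) to the matrix $A^t$ gives that the columns of $A^t$ are Gondran--Minoux dependent if and only if $\detp{A^t}\balance\ooo$, which by the transpose invariance above is the same as $\detp{A}\balance\ooo$. Negating both sides, $\mrgm(A)=n$ holds if and only if $\detp{A}\notbalance\ooo$, which we have already identified with $\rkdet(A)=n$. This proves the equivalence. The only point requiring care is the row/column bookkeeping, which is resolved by the transpose invariance of the bideterminant; there is no genuine obstacle, and the conclusion is indeed immediate once the Gondran--Minoux theorem (itself a consequence of Theorem~\ref{TGMa}) is in hand.
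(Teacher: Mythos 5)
Your proof is correct and takes essentially the same route as the paper, which simply declares the result an immediate consequence of Theorem~\ref{TGMa}: you pass through Corollary~\ref{TGM} (itself the $\rmax$-specialization of Theorem~\ref{TGMa}) and handle the row/column issue via transpose invariance of the bideterminant. The translations $\rkdet(A)=n\iff\detp{A}\notbalance\ooo$ and $\mrgm(A)=n\iff$ all $n$ rows GM-independent, together with the transposition step, are exactly the bookkeeping the paper leaves implicit, and you carry them out correctly.
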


We shall see in Proposition~\ref{mrmcGM} below that in general
$\rk_{\det}(A)$ and $\mrgm(A)$ may differ. However,
Theorem~\ref{Th-trop-rect} below, stated by Izhakian in~\cite{Izh},
shows that the analogous rank notions coincide 
when the symmetrized max-plus semiring $\S$ 
is replaced by the extended tropical semiring $\Ti$.

Theorem~\ref{Th-trop-rect} may seem quite surprising.
Indeed, the proof
of Theorem~\ref{th-gmnew} via Theorem~\ref{TGMa}, 
as well as Izhakian's proof of Theorem~\ref{th-add-izh}, 
can be interpreted in terms of network flows arguments. 
The extension of such flow arguments to general rectangular matrices
seems foredoomed, because 
Sturmfels and Zelevinsky showed in~\cite{SZ} that
the Newton polytope of the product of the maximal minors of a general
rectangular matrix is not a transportation polytope, unless
the numbers of rows and columns differ of at most one unit.
Hence, different techniques must be used.
In~\cite{Izh}, Izhakian gives elements of proof of Theorem~\ref{Th-trop-rect} 
relying on a reduction to the square case, by an inductive argument. 
We believe that this proof strategy can lead to the result, 
however, further arguments are needed. 
In~\cite{AGG}, we prove directly the result
in the rectangular case, using a different approach
in which linear independence is expressed
in terms of a zero-sum mean payoff game problem. We also show
that the rectangular case can be derived from the
square case by applying the tropical analogue
of Helly's theorem~\cite{BriecHorvath04,gauser,meunier}.



\begin{theorem}[{See \cite{Izh}, \cite{AGG}}] \label{Th-trop-rect}
For any $A\in\Mnm(\rmax)$, we have
$\trop (A)=\mrtrop(A)=\mctrop(A)$.
\end{theorem}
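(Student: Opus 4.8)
The plan is to prove the two inequalities $\trop(A)\le \mrtrop(A)$ and $\mrtrop(A)\le \trop(A)$, and then to deduce the statement for columns by transposition. The first inequality is already available: it is the second assertion of Lemma~\ref{LO1new}. Moreover, the tropical rank is invariant under transposition, since a $k\times k$ submatrix is tropically singular if and only if its transpose is (the permanent is unchanged under $\sigma\mapsto\sigma^{-1}$), so $\trop(A)=\trop(A^t)$. Hence, once the nontrivial inequality $\mrtrop(A)\le\trop(A)$ is established for every matrix, applying it to $A^t$ yields $\mctrop(A)=\mrtrop(A^t)\le\trop(A^t)=\trop(A)\le\mctrop(A)$, the last step again by Lemma~\ref{LO1new} applied to $A^t$; so all three quantities coincide.

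Thus the whole difficulty is to show $\mrtrop(A)\le\trop(A)$. First I would reduce to a matrix with tropically independent rows: setting $k:=\mrtrop(A)$, I select $k$ tropically independent rows of $A$, forming a $k\times m$ submatrix $B$ with $\mrtrop(B)=k$. Since tropical independence implies Gondran--Minoux independence (Remark~\ref{RIn2}) and, by the max-plus Radon theorem (Corollary~\ref{LS}), at most $m$ vectors of $\rmax^m$ can be Gondran--Minoux independent, we have $k\le m$. By monotonicity of the tropical rank (Remark~\ref{monrank}), $\trop(A)\ge\trop(B)$, so it suffices to prove $\trop(B)=k$, i.e.\ that $B$ admits a tropically nonsingular $k\times k$ submatrix.

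For this rectangular, full-row-rank statement I would argue by contradiction using a Helly-type gluing. Unwinding Definition~\ref{D3} over $\rmax$, the rows of $B$ are tropically dependent precisely when there is a nonzero coefficient vector $\alpha\in\rmax^k$ such that, for every column $l$, the maximum $\max_i(\alpha_i+B_{il})$ is attained at least twice. For each column $l$, let $H_l$ denote the set of such $\alpha$, viewed in the projectivization of $\rmax^k$ (of dimension $k-1$): this is a tropical hyperplane, hence a tropically convex set. Tropical dependence of all the rows is then exactly the nonemptiness of $\bigcap_l H_l$, while the nonemptiness of an intersection $H_{l_1}\cap\cdots\cap H_{l_k}$ of $k$ of them means that the rows restricted to the columns $l_1,\dots,l_k$ are tropically dependent, i.e.\ that the corresponding $k\times k$ submatrix is tropically singular; here I use the already established square case, Theorem~\ref{th-add-izh}, which identifies singularity of a square matrix with dependence of its rows. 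Now suppose $\trop(B)<k$, so that every $k\times k$ submatrix of $B$ is tropically singular; then every $k$ of the convex sets $H_l$ meet, and the tropical analogue of Helly's theorem in dimension $k-1$ (see~\cite{BriecHorvath04,gauser,meunier}), whose Helly number is $k$, forces $\bigcap_l H_l\ne\emptyset$. This makes the rows of $B$ tropically dependent, contradicting $\mrtrop(B)=k$. Hence $\trop(B)=k$, which completes the argument.

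The main obstacle is precisely this last paragraph, the passage from the square case to the rectangular case. The delicate points are the verification that each $H_l$ is genuinely tropically convex, with due care for columns containing $\ooo$ entries which lower the effective dimension, and the correct bookkeeping of the Helly number so that the local hypothesis matches exactly the singularity of $k\times k$ submatrices. An alternative to the Helly gluing, which avoids the square-case input, is to characterize tropical linear independence through the value of a zero-sum mean payoff game and to read off a nonsingular maximal minor directly from an optimal strategy, as carried out in~\cite{AGG}; I would keep this in reserve in case the convexity of the sets $H_l$ proves too fragile to handle uniformly.
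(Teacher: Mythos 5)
Your argument is correct and coincides with the approach this paper itself endorses: the paper gives no self-contained proof of Theorem~\ref{Th-trop-rect} (it is quoted from \cite{Izh} and \cite{AGG}), but the discussion preceding the statement describes exactly your route --- reduce to a full-row-rank submatrix and derive the rectangular case from Izhakian's square case (Theorem~\ref{th-add-izh}) via the tropical Helly theorem of \cite{BriecHorvath04,gauser,meunier} --- as one of the two arguments carried out in \cite{AGG}, the other being the mean payoff game argument you keep in reserve. Your two flagged worries are in fact harmless: each $H_l$ is a tropical subsemimodule (cone) of $\rmax^k$ even when the column has $\zero$ entries (closure under $\oplus$ and scaling is checked directly, degenerate columns just give coordinate subsemimodules or all of $\rmax^k$), and the Helly number for finitely many such cones having a common nonzero point is indeed $k$, matching the $k\times k$ submatrices.
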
 

The following elementary result completes the comparison between
the various rank functions.
\begin{lemma}\label{lem-ineqhalf}
For any $A\in\Mnm(\R)$, we have 
$\mrgm(A) \le \facrk(A) \le \ror(A). $
\end{lemma}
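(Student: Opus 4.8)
The statement is the conjunction of two inequalities, $\mrgm(A)\le\facrk(A)$ and $\facrk(A)\le\ror(A)$, and the plan is to treat them separately. The second is essentially bookkeeping built on results already proved, while the first rests on the max-plus Radon theorem (Corollary~\ref{LS}).

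For $\facrk(A)\le\ror(A)$ I would invoke Proposition~\ref{P_er=f}, which identifies the factor rank with the weak enveloping row rank, $\facrk(A)=\erw(A)=\edimw(\rosp(A))$. By Remark~\ref{re-env} the weak enveloping dimension of a set is the minimum of $\wdim(M)$ over all subsemimodules $M$ containing it, so taking $M=\rosp(A)$ gives $\edimw(\rosp(A))\le\wdim(\rosp(A))=\ror(A)$, which is exactly what is wanted. Equivalently, one may use Theorem~\ref{TCGB}, $\ror(A)=\sr(A)$: the spanning row rank provides $\sr(A)$ rows of $A$ generating $\rosp(A)$, hence a factorization $A=BC$ with $C$ having $\sr(A)$ rows, whence $\facrk(A)\le\sr(A)=\ror(A)$.

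The interesting inequality is $\mrgm(A)\le\facrk(A)$, and the plan is to prove the sharper statement that \emph{any} $f+1$ rows of $A$ are Gondran--Minoux dependent, where $f:=\facrk(A)$; this at once bounds the maximal number of GM-independent rows by $f$. Writing $A=BC$ with $B\in\MM_{m,f}(\R)$ and $C\in\MM_{f,n}(\R)$, each row $a_i$ of $A$ equals the combination $\bigoplus_{j=1}^f B_{ij}C_j$ of the $f$ rows $C_1,\dots,C_f$ of $C$, with coefficient vector $b_i:=(B_{i1},\dots,B_{if})\in\R^f$. Given any $f+1$ rows, their coefficient vectors form a family of $f+1$ vectors in $\R^f$, which by Corollary~\ref{LS} is linearly dependent in the Gondran--Minoux sense: there is a partition $I\sqcup J$ of the index set and scalars $\alpha_i$, not all $\ooo$, with $\bigoplus_{i\in I}\alpha_i b_i=\bigoplus_{i\in J}\alpha_i b_i$ in $\R^f$. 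The key step is then to lift this equality through $C$: reading the relation entrywise, $\bigoplus_{i\in I}\alpha_i B_{ij}=\bigoplus_{i\in J}\alpha_i B_{ij}$ for each $j$, and recombining with the rows $C_j$ using distributivity of the action over addition yields $\bigoplus_{i\in I}\alpha_i a_i=\bigoplus_{i\in J}\alpha_i a_i$, which exhibits the chosen rows as GM-dependent in the sense of Definition~\ref{D2}.

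The part requiring care — what I would regard as the crux rather than a genuine obstacle — is the recognition that the problem reduces to Radon in the \emph{lower} dimension $f$, together with checking that the partition $I,J$ and the nonvanishing of the $\alpha_i$ survive the passage from $\R^f$ to $\R^n$. The latter is immediate, since the same partition and the same scalars are reused and the entrywise balances propagate to the $C_j$-combinations by distributivity. Once this is set up, combining the two inequalities gives the chain $\mrgm(A)\le\facrk(A)\le\ror(A)$; note that monotonicity under passing to submatrices (Remark~\ref{monrank}) is not even needed in this formulation, since the size of GM-independent families of rows is bounded directly.
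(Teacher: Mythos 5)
Your treatment of the main inequality $\mrgm(A)\le\facrk(A)$ coincides with the paper's proof: set $f:=\facrk(A)$, factor $A=BC$ with $B\in\MM_{m,f}(\R)$ and $C\in\MM_{f,n}(\R)$, apply the max-plus Radon theorem (Corollary~\ref{LS}) to any $f+1$ rows of $B$ (your coefficient vectors $b_i$ are exactly those rows), and right-multiply the resulting Gondran--Minoux relation by $C$; this is the paper's argument essentially verbatim. Where you genuinely differ is the second inequality $\facrk(A)\le\ror(A)$: the paper disposes of it by citing Beasley and Guterman~\cite{BG}, whereas you derive it from the paper's own results, either as $\facrk(A)=\erw(A)=\edimw(\rosp(A))\le\wdim(\rosp(A))=\ror(A)$ (Proposition~\ref{P_er=f} together with Remark~\ref{re-env}), or via $\ror(A)=\sr(A)$ (Theorem~\ref{TCGB}) plus the observation that a generating subset of $\sr(A)$ rows of $A$ yields a factorization $A=BC$ with $C$ of size $\sr(A)\times n$. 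Both of your derivations are correct and make the lemma self-contained; note that the first one even works over an arbitrary semiring, since Proposition~\ref{P_er=f} and Remark~\ref{re-env} are stated in that generality (thus matching the generality of the result cited from~\cite{BG}), while the route through Theorem~\ref{TCGB} is specific to $\rmax$, which is all the lemma requires.
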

\d
We prove the first inequality.
Let $r:=\facrk(A)$. If $r=m$ then we are done. So let us assume that $r<m$. We have to check that any $r+1$ rows of $A$ are Gondran-Minoux linearly dependent. Up to a permutation we may consider the first $r+1$ rows: $A_{1\circ},\ldots, A_{r+1\circ}$. 
(Here, $F_{i\circ}$ denotes the $i$th row of $A$.)
By Definition~\ref{Df} there exist matrices $B\in{\cal M}_{m\,r}$, $C\in {\cal M}_{r\, n}$ such that $A=BC$. By Corollary~\ref{LS} the first $r+1$ rows of $B$ are Gondran-Minoux linearly dependent. Thus there exist subsets $I,J\subset K:=\{1,\ldots, r+1\}$, $I\cap J=\emptyset$, $I\cup J =K$ and scalars $\lambda_1,\ldots, \lambda_{r+1}$ not all equal to $\ooo$ such that
$$ \bigoplus\limits_{i\in I}\lambda_i B_{i\circ}= \bigoplus\limits_{j\in J}\lambda_j B_{j\circ}.$$ 
Right multiplying this equality by $C$, we deduce that the same equality
holds for the rows of $A$, and so $\mrgm(A)\le r$.

The second inequality is proved in~\cite{BG}.
\dd



Gathering Remarks~\ref{rk-sasha},~\ref{rk-tropdet} and~\ref{rk-general}, the first part of Lemma~\ref{LO1new}, and Lemma~\ref{lem-ineqhalf}, 
we obtain the following comparison theorem.
\begin{theorem} \label{TEq}
For $A\in\Mnm(\R)$ the ranks of $A$ are ordered as indicated in the Hasse diagram of Figure~\ref{fig1} (when two ranks are connected by a segment, the rank
at the top of the segment is the bigger one). 
\end{theorem}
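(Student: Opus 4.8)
The plan is to treat Theorem~\ref{TEq} as a synthesis: every individual comparison has already been established, so the remaining work is to assemble them into a single partial order on the set of rank functions and then read off the covering relations. First I would collect the inequalities relating the ``determinantal'' and ``independence'' ranks. Combining Remark~\ref{rk-tropdet} with the first part of Lemma~\ref{LO1new} gives the chain
\[
\trop(A)\le \rkdet(A)\le \mrgm(A),
\]
while the second part of Lemma~\ref{LO1new} together with Remark~\ref{rk-obviousineq} gives
\[
\trop(A)\le \mrtrop(A)\le \mrgm(A).
\]
Thus $\trop(A)$ sits at the bottom, with $\rkdet(A)$ and $\mrtrop(A)$ as two incomparable intermediate vertices both dominated by $\mrgm(A)$.

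Next I would climb upward from $\mrgm(A)$ using Lemma~\ref{lem-ineqhalf}, which yields $\mrgm(A)\le \facrk(A)\le \ror(A)$. Here I would invoke the two identifications special to the max-plus setting: Proposition~\ref{P_er=f} lets me replace $\facrk(A)$ by $\erw(A)$, and Theorem~\ref{TCGB} lets me replace $\ror(A)$ by $\sr(A)$ (an equality that fails over general semirings, cf.\ Remark~\ref{rk-general}, so it is essential that we work over $\R$). From $\facrk(A)$ the diagram then branches: Remark~\ref{rk-sasha} supplies the edge $\facrk(A)\le \termrk(A)$, while Remark~\ref{rk-general} combined with $\ror=\sr$ gives $\ror(A)=\sr(A)\le \mrw(A)$, placing $\mrw(A)$ at the top. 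This accounts for all the row-based ranks.

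The column-based ranks are obtained by transposition, applying the same chain to $A^t$; the only ones that collapse onto the row picture are the tropical ranks, since $\trop$ is invariant under transposition and Theorem~\ref{Th-trop-rect} forces $\mrtrop(A)=\mctrop(A)=\trop(A)$. Finally I would pass from the resulting preorder to its Hasse diagram by discarding every relation implied by transitivity, keeping only the covering edges recorded in Figure~\ref{fig1}.

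The point worth emphasizing is that there is no residual mathematical obstacle at this stage: all the genuine content has already been spent, in particular the Cramer-rule input behind Lemma~\ref{LO1new} and the Radon-theorem input (via Corollary~\ref{LS}) behind Lemma~\ref{lem-ineqhalf}. The only thing that requires care is hygiene: checking that the collected inequalities are mutually consistent, so that they force no unintended equalities; that the max-plus-specific identities $\facrk=\erw$ and $\ror=\sr$ are used rather than their weaker general-semiring counterparts; and that the figure records exactly the covering relations, with no spurious shortcut edges.
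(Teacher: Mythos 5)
Your proposal is correct and is essentially the paper's own argument: the paper proves Theorem~\ref{TEq} by exactly this kind of gathering, citing Remarks~\ref{rk-sasha}, \ref{rk-tropdet} and~\ref{rk-general}, the first part of Lemma~\ref{LO1new}, and Lemma~\ref{lem-ineqhalf}, applied to $A$ and to $A^t$ (together with the evident transposition-invariance of $\trop$, $\rkdet$, $\facrk$, $\termrk$).

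Two of your side claims should be corrected, though. First, the identifications you call essential are not needed: neither $\sr(A)$ nor $\erw(A)$ is a vertex of Figure~\ref{fig1}, the edge $\ror(A)\le \mrw(A)$ follows from Remark~\ref{rk-general} alone, which holds over every semiring, and Proposition~\ref{P_er=f} is not ``special to the max-plus setting'' --- it is stated and proved for an arbitrary semiring. Second, $\mrtrop(A)$ and $\mctrop(A)$ do not appear in Figure~\ref{fig1}, so the deep Theorem~\ref{Th-trop-rect} (whose proof lies outside this paper) is never required; invoking it is legitimate, but it makes your intermediate assertion that $\rkdet(A)$ and $\mrtrop(A)$ are ``incomparable'' vertices inconsistent with your own final collapse $\mrtrop(A)=\trop(A)\le\rkdet(A)$. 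The cleaner route, which the paper takes, is simply to omit these two ranks from the diagram.
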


\begin{figure}[htbp]
\begin{picture}(0,0)%
\includegraphics{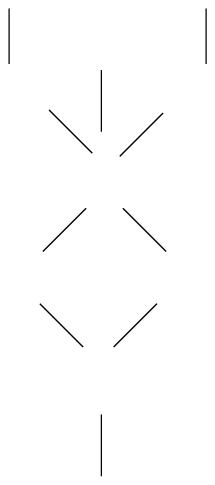}%
\end{picture}%
\setlength{\unitlength}{2590sp}%
\begingroup\makeatletter\ifx\SetFigFontNFSS\undefined%
\gdef\SetFigFontNFSS#1#2#3#4#5{%
  \reset@font\fontsize{#1}{#2pt}%
  \fontfamily{#3}\fontseries{#4}\fontshape{#5}%
  \selectfont}%
\fi\endgroup%
\begin{picture}(1827,4107)(3676,-6637)
\put(4321,-5686){\makebox(0,0)[lb]{\smash{{\SetFigFontNFSS{10}{12.0}{\familydefault}{\mddefault}{\updefault}{\color[rgb]{0,0,0}$\rkdet(A)$}%
}}}}
\put(5086,-4876){\makebox(0,0)[lb]{\smash{{\SetFigFontNFSS{10}{12.0}{\familydefault}{\mddefault}{\updefault}{\color[rgb]{0,0,0}$\mrgm(A)$}%
}}}}
\put(3691,-4876){\makebox(0,0)[lb]{\smash{{\SetFigFontNFSS{10}{12.0}{\familydefault}{\mddefault}{\updefault}{\color[rgb]{0,0,0}$\mcgm(A)$}%
}}}}
\put(3691,-2761){\makebox(0,0)[lb]{\smash{{\SetFigFontNFSS{10}{12.0}{\familydefault}{\mddefault}{\updefault}{\color[rgb]{0,0,0}$\mcw(A)$}%
}}}}
\put(4321,-6541){\makebox(0,0)[lb]{\smash{{\SetFigFontNFSS{10}{12.0}{\familydefault}{\mddefault}{\updefault}{\color[rgb]{0,0,0}$\trop(A)$}%
}}}}
\put(5221,-2761){\makebox(0,0)[lb]{\smash{{\SetFigFontNFSS{10}{12.0}{\familydefault}{\mddefault}{\updefault}{\color[rgb]{0,0,0}$\mrw(A)$}%
}}}}
\put(4411,-3121){\makebox(0,0)[lb]{\smash{{\SetFigFontNFSS{10}{12.0}{\familydefault}{\mddefault}{\updefault}{\color[rgb]{0,0,0}$\termrk(A)$}%
}}}}
\put(5266,-3526){\makebox(0,0)[lb]{\smash{{\SetFigFontNFSS{10}{12.0}{\familydefault}{\mddefault}{\updefault}{\color[rgb]{0,0,0}$\ror(A)$}%
}}}}
\put(3781,-3526){\makebox(0,0)[lb]{\smash{{\SetFigFontNFSS{10}{12.0}{\familydefault}{\mddefault}{\updefault}{\color[rgb]{0,0,0}$\cor(A)$}%
}}}}
\put(4546,-4201){\makebox(0,0)[lb]{\smash{{\SetFigFontNFSS{10}{12.0}{\familydefault}{\mddefault}{\updefault}{\color[rgb]{0,0,0}$\facrk(A)$}%
}}}}
\end{picture}%
\caption{Comparison between ranks on $\rmax$}\label{fig1}
\end{figure}



We next show that the inequalities in Theorem~\ref{TEq} can be strict.
We already saw in Example~\ref{Xi} a matrix $X$ such that
$\facrk(X)=3=\cor(X) < \ror(X)=4$, which shows
that the two non central inequalities
at the fourth level (from the bottom) of Figure~\ref{fig1} may be strict
and that $\cor(A)$ and $\ror(A)$ may differ.
We note that the matrix $A=[\ooi \ooi]^t [\ooi \ooi]\in {\mathcal M}_{2}(\R)$ 
has term rank $2$ whereas 
$\facrk(A)=1=\ror(A)=\cor(A)=\mrw(A)=\mcw(A)$, showing that 
the central inequality at the same level may be strict
and that $\termrk(A)$ may differ from the other ranks under consideration.
Moreover, the matrix of Example~\ref{ex:mr_w} is such that $\ror(A)=\cor(A)=\mcw(A)<\mrw(A)$ showing that the inequalities at the fifth level of Figure~\ref{fig1} may be strict and that we may have $\mcw(A)\neq \mrw(A)$. 

To show that the other inequalities can be strict, we need some more
sophisticated examples.
%
\begin{ex} \label{ExfneK}
As in~\cite[Example 3.5]{DSS}, we consider the following matrix
$$
D_n=\left[ 
\begin{array}{rrrrr}
-1&\ooi&\ooi &\ldots &\ooi\\
\ooi& -1 & \ooi & \ldots & \ooi\\
\ooi& \ooi&-1 & \ldots & \ooi\\
\vdots & \vdots &\vdots &\ddots &\vdots \\
\ooi& \ooi & \ooi & \ldots & -1 \end{array}\right]\in\M(\R).
$$
\begin{enumerate}
\item It follows from~\cite[Proposition 2.2]{DSS}, that $\facrk(D_3)= 3$, $\facrk(D_4)=\facrk(D_5)=\facrk(D_6)=4$, $\facrk(D_7)=5$. 
\item It is easy to see that for $n\ge 3$, $\rk_{\det}(D_n)=3$. Indeed, any 4 rows (or columns) are Gondran-Minoux linearly dependent, and the positive and negative determinants of any principal $3 \times 3$-submatrix are different.
\item We have $\rk_{{\det}}(D_3)=3> 2 =\trop(D_3)$, showing that the
inequality at the first level (from the bottom) of Figure~\ref{fig1}
can be strict.
\item Since for $n\ge 4$ the sum of any two rows or columns of $D_n$ is equal
to $\unit$ and $\rk_{\det}(D_n)=3$, it follows that $\mrgm(D_n) =\mcgm(D_n) =3< \facrk(D_n)$,
showing that the inequalities at the third level of Figure~\ref{fig1}
can be strict. 
\end{enumerate}
\end{ex}
The following result shows that the maximal row and column Gondran-Minoux ranks may differ, and that they may also differ from the determinantal rank.
\begin{prop} \label{mrmcGM}
The matrix
$$
F=\left[ \begin{array}{ccccccc}
\ooo  & \ooi  & \ooi & \ooi & \ooi & \ooo & \ooo \\
\ooi  & \ooo  & \ooo & \ooo & \ooi & \ooo & \ooo \\
\ooi  & \ooo  & \ooi & \ooi & \ooo & \ooi & \ooo \\
\ooo  & \ooi  & \ooo & \ooo & \ooo & \ooi & \ooo \\
\ooi  & \ooi  & \ooo & \ooi & \ooo & \ooo & \ooi \\
\ooo  & \ooo  & \ooi & \ooo & \ooo & \ooo & \ooi 
\end{array}
\right]\in \MM_{6,7}(\rmax)
$$
is such that $\mrgm(F)=6 > \mcgm(F)=\rkdet(F)=5$.
\end{prop}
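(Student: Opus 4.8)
The plan is to reduce all four numerical claims to statements about parities of perfect matchings, and then to treat separately the part that yields to the determinantal machinery ($\rkdet(F)=\mcgm(F)=5$) and the genuinely new part ($\mrgm(F)=6$). Since every entry of $F$ is $\ooo$ or $\ooi$, each term $a_{1\sigma(1)}\cdots a_{n\sigma(n)}$ of a bideterminant equals $\ooi$ exactly when $\sigma$ is supported by the $\ooi$-pattern and equals $\ooo$ otherwise. Hence for a square submatrix $B$ one has $|B|^+=\ooi$ (resp. $|B|^-=\ooi$) iff the pattern admits an even (resp. odd) surviving permutation, so $B$ is invertible over $\smax$ (i.e. $|B|^+\neq|B|^-$) iff it has at least one surviving permutation and all of them share the same parity. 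All the relevant conditions thus become combinatorial statements about the bipartite pattern graph of $F$, whose column supports are $S_1=\{2,3,5\}$, $S_2=\{1,4,5\}$, $S_3=\{1,3,6\}$, $S_4=\{1,3,5\}$, $S_5=\{1,2\}$, $S_6=\{3,4\}$, $S_7=\{5,6\}$.

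For $\rkdet(F)=\mcgm(F)=5$ I would first exhibit one invertible $5\times5$ submatrix: taking rows $\{1,2,4,5,6\}$ and columns $\{1,2,3,6,7\}$, the forced choices (the unique entries force row $2\mapsto$ column $1$ and row $4\mapsto$ column $6$) leave a single $6$-cycle on the remaining three rows and columns, whose two matchings are \emph{both odd}; thus $|B|^+=\ooo\neq\ooi=|B|^-$. By the Gondran--Minoux theorem (Corollary~\ref{TGM}) the five columns of this submatrix are linearly independent in $\rmax^5$, and by the restriction property underlying Remark~\ref{monrank} the corresponding five columns of $F$ are independent in $\rmax^6$, giving $\rkdet(F)\ge5$ and $\mcgm(F)\ge5$. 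For the upper bound I would check the seven $6\times6$ submatrices obtained by deleting one column and verify that each satisfies $|B|^+=|B|^-$ (either no surviving permutation or surviving permutations of both parities). By Corollary~\ref{TGM} this says that every six columns of $F$ are Gondran--Minoux dependent, whence $\mcgm(F)\le5$ and $\rkdet(F)\le5$, so both equal $5$.

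The main obstacle is $\mrgm(F)=6$, because here the determinantal test is unavailable: every $6\times6$ minor is singular over $\smax$, so one cannot certify row independence through a nonvanishing determinant and must argue directly. A Gondran--Minoux dependence of the six rows amounts to a nonzero signed $y\in\smax^6$ with $\bigoplus_{i\in S_l}y_i\balance\ooo$ for each column $l$; writing $y_i=\epsilon_i\alpha_i$ with a sign $\epsilon_i\in\{+,-\}$ and a modulus $\alpha_i$, this means that in every column the largest modulus carried by a $+$ row equals the largest modulus carried by a $-$ row. The three light columns $S_5=\{1,2\}$, $S_6=\{3,4\}$, $S_7=\{5,6\}$ force the rows to pair up: each of $\{1,2\},\{3,4\},\{5,6\}$ is either entirely unused or consists of two rows of opposite sign and common modulus, say $a,b,c$ respectively. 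Inspecting $S_1,S_2,S_3,S_4$ then shows that no single pair may carry the strict maximum of $a,b,c$ (for instance if $a$ is the strict maximum, column $2$, whose support $\{1,4,5\}$ contains only one modulus-$a$ row, cannot balance), so the maximum is attained by at least two pairs. In each remaining configuration two of the columns $S_1,\dots,S_4$ impose contradictory sign requirements: e.g. when $a=b>c$, columns $1$ and $2$ force the orientations of the pairs $\{1,2\}$ and $\{3,4\}$ to be equal ($s_A=s_B$) whereas column $3$ forces them opposite, and the fully balanced case $a=b=c$ is eliminated by a direct check over the eight sign patterns $(s_A,s_B,s_C)$. Hence no such $y$ exists, the six rows are independent, and $\mrgm(F)=6$.

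I expect the laborious-but-routine step to be the verification that all seven $6\times6$ minors are singular, and the conceptually delicate step to be the sign analysis of the previous paragraph, which is exactly what forces the asymmetry $\mrgm(F)=6>\mcgm(F)=\rkdet(F)=5$: a balancing partition and scaling exists for any six of the seven column-constraints (each $6\times6$ minor is singular) but none works for all seven at once.
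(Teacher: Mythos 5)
Your proposal is correct and follows essentially the same route as the paper's proof: the three light columns force the rows into oppositely-signed pairs of equal modulus, the four heavy columns then yield a sign contradiction (your case analysis on which of the moduli $a,b,c$ attain the maximum is an unpacked version of the paper's one-line argument that the sign pattern of $(y_1,y_3,y_5)$ must, up to a global sign change, coincide with one of the four column sign patterns, making balance in that column impossible), and the column bound comes, exactly as in the paper, from checking that all seven $6\times 6$ minors are balanced and invoking the Gondran--Minoux theorem. The only other differences are cosmetic: you certify $\rkdet(F)\geq 5$ with the rows $\{1,2,4,5,6\}$, columns $\{1,2,3,6,7\}$ submatrix (both of whose supported permutations are indeed odd), whereas the paper uses the $\{2,3,4,5,6\}\times\{3,4,5,6,7\}$ submatrix, and you get $\mcgm(F)\geq 5$ from that same certificate plus monotonicity rather than from the inequality $\rkdet(F)\leq\mcgm(F)$.
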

%
Before proving this proposition, let us explain the
idea leading to this example, which originates from~\cite{gaubert92a}. Consider
the matrix $F'$ over the symmetrized Boolean semiring 
$\BB$ (Definition~\ref{def-symb}),
\[
F'=\left[\begin{array}{cccc}
\ominus \unit &  \unit & \unit &\unit  \\
 \unit &  \ominus \unit & \unit &\unit  \\
\unit &   \unit & \ominus \unit &\unit  
\end{array}
\right]
\]
We make the following observations.

{\em Claim~1. Every maximal minor of $F'$ is balanced.}\/ Indeed,
any pair of columns of $F'$ contains a minor of order $2$ which is
equal to $\unit^\circ$, 
and so, when expanding any minor of order $3$ with respect to any column,
at least one of the terms in the expansion must be equal to $\unit^\circ$.

{\em Claim~2}. {\em There is no signed row vector $y\in (\smax)^3$ such that $y\neq\zero$ and $yF'\balance \zero$}.\/ Indeed, the columns of $F'$ contain all the vectors
$\alpha\in\{\unit,\ominus\unit\}^3$, up to a change of sign. Let us now
take $\alpha$ to be a sign vector of $y$, which is a vector
in $\{\unit,\ominus\unit\}^3$ such that $\alpha_i$ and
$y_i$ have the same sign (if $y_i=\zero$, the sign of $\alpha_i$ can be
chosen arbitrarily). Then, $y\alpha=\bigoplus_iy_i\alpha_i$ is invertible,
since the latter sum comprises only sign positive terms that cannot identically vanish because $y\neq\zero$. However, from $yF'\balance \zero$, 
we deduce that $y\alpha\balance\zero$, which is nonsense.

These two claims indicate that $F'$ is, loosely
speaking, of ``Gondran-Minoux maximal row rank'' $3$ but
of ``determinantal rank'' $2$, should we define these
notions in terms of balances. To obtain the desired counter example
for matrices in $\rmax$, 
it remains to ``double the structure'', which we do
by substituting $\unit$ and $\ominus\unit$ with the vectors
$[\unit,\zero]^t$ and $[\zero,\unit]^t$, respectively.
In this way, we arrive at the $6\times 4$ left submatrix of $F$. The remaining
$3$ columns are chosen precisely to encode the doubling of structure.

\begin{proof}[Proof of Proposition~\ref{mrmcGM}] \ \ 
1.~We first show that $\mrgm(F)=6$. Assume by contradiction that $\mrgm(F)<6$.
Then, we can find a signed non-zero row vector $x$ with entries in $\smax$
such that 
\[
xF \balance \ooo .
\]
Considering the last three columns of this vector relation,
we get
$$ x_1\oplus x_2\balance \ooo , \qquad x_3\oplus x_4\balance \ooo ,
\qquad x_5\oplus x_6\balance \ooo \enspace .$$ 
Since the entries of $x$ are signed, we deduce from Property~\ref{LRRR} 
that
\e
\label{eq:ls_x_123}
x_2=\ominus \, x_1 , \quad x_4=\ominus \, x_3, \quad x_6=\ominus \, x_5 \enspace.
\ee
Observe that $y:=[x_1,x_3,x_5]$ is signed and that it must be non-zero
due to the latter relations.  Substituting $x_2,x_4,x_6$ according
to~\eqref{eq:ls_x_123} in $xF\balance \zero$, and looking
only for the first four columns in the latter vector relation,
we arrive at $yF'\balance \zero$, where $F'$ is the matrix defined above.
Now, Claim~2 gives a contradiction, showing that the rows of $F$ are linearly independent in the Gondran-Minoux sense.


\medskip 2.~A straightforward computation shows that the seven maximal (i.e.\ $6\times 6$) minors of $F$ are balanced. Indeed, using the symmetry between
the three first columns, and the symmetry between the three last ones, it suffices to check that the three maximal minors obtained by suppressing the columns
$1$, $4$, or $7$ are unbalanced.
By the Gondran-Minoux theorem (see Corollary~\ref{TGM}), it follows that every family of $6$ columns of $A$ is linearly dependent in the Gondran-Minoux sense, and so $\mcgm(A)<6$. 
3.~Finally, a computation shows that the $\{2,3,4,5,6\}\times \{3,4,5,6,7\}$ submatrix of $F$ has an unbalanced determinant. Therefore, $\rkdet(F)\geq 5$. By Lemma~\ref{LO1new} applied to $F^t$, we get  $\rkdet(F)\leq \mcgm(F)$, and so $\rkdet(F)=\mcgm(F)=5$.
\end{proof}

%
%


\begin{ex}
As a corollary of the previous proposition, we get an example of a matrix $A$ with $\rkdet(A)<\min\{\mrgm(A),\mcgm(A)\}$. Indeed, let us consider the following block matrix:
$$
G:=\left[\begin{matrix}F &\ooo\\\ooo & F^t\end{matrix}\right]
\in \MM_{13,13}(\rmax) \enspace .
$$
Using the structure of $G$ and the previously established
properties of the matrix $F$, it can be checked that $\mrgm(G)=\mcgm(G)=11>10=\rkdet(G)$.
\end{ex}

\begin{problem} Find the minimal numbers $m$ and $n$ such that there exists an $m\times n$-matrix with different row and column Gondran-Minoux ranks.
\end{problem}

Recall that a family $\mathcal{I}$ of finite sets satisfies the
\NEW{augmentation axiom} of matroids if whenever $U,V\in \mathcal{I}$,  
if $V$ has more elements than $U$, we can find $v\in V$ in such a way
that $U\cup\{v\}\in \mathcal{I}$. 
The example of Proposition~\ref{mrmcGM} leads to the following
negative result.
\begin{corollary}
The set of finite subsets of $\rmax^n$ that are independent in the Gondran-Minoux sense does not satisfy the augmentation axiom of matroids.
\end{corollary}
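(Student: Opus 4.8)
The plan is to refute the augmentation axiom by exhibiting, in a single $\rmax^n$, two disjoint Gondran--Minoux independent families $U$ and $V$ with $|U|<|V|$ such that $U\cup\{v\}$ is Gondran--Minoux dependent for every $v\in V$. First I would note that Gondran--Minoux independence (Definition~\ref{D2}) is hereditary: any dependence of a subfamily extends to the whole family by assigning the coefficient $\zero$ to the missing vectors, so every subfamily of an independent family is independent. Thus the collection $\mathcal I$ of independent finite subsets of $\rmax^n$ is an independence system, and for such a system the augmentation axiom is equivalent to the statement that, inside every finite $S\subset\rmax^n$, all inclusion-maximal members of $\mathcal I$ contained in $S$ have the same cardinality. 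Consequently it suffices to produce one finite set $S$ carrying two inclusion-maximal independent subsets of different sizes, and then to realise these as disjoint families $U$ and $V$.

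I would take $S$ to consist of vectors read off from the matrix $F$ of Proposition~\ref{mrmcGM}, working in $\rmax^{6}$. The matrix $F$ was built precisely so that $\mrgm(F)=6>\mcgm(F)=\rkdet(F)=5$, and its column side carries exactly the non-matroidal structure I want to exploit. On the one hand, the computation in the proof of Proposition~\ref{mrmcGM} shows that all seven $6\times 6$ minors of $F$ are balanced, so by the Gondran--Minoux theorem (Corollary~\ref{TGM}) every six columns are dependent, while the unbalanced $5\times5$ minor exhibited there yields a Gondran--Minoux independent family of five columns. On the other hand, the three ``marker'' columns together with the ``doubled'' columns coming from the symmetrized Boolean block $F'$ can be assembled into a second independent family that is inclusion-maximal in $S$ yet strictly smaller, because completing it in any way forces a balanced determinant and hence, again by Corollary~\ref{TGM} (and by the max-plus Radon theorem, Corollary~\ref{LS}, for the over-determined completions), a dependence. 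Taking $U$ to be this smaller maximal family and $V$ a maximum independent family chosen disjoint from $U$ then gives $|U|<|V|$ with $U\cup\{v\}$ dependent for every $v\in V$, which is the required violation.

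Each certificate is elementary in principle: an independence is witnessed by a square submatrix with unbalanced determinant --- equivalently, by an optimal assignment of a single parity, which can even be checked combinatorially via Theorem~\ref{thbut} --- and a non-augmentability is witnessed by checking that every enlargement has a balanced determinant on the relevant minors. The main obstacle is the middle step: verifying that $F$ genuinely carries an inclusion-maximal Gondran--Minoux independent set of columns of size strictly below $\mcgm(F)=5$, and organising the finitely many completions so that each is seen to be dependent. This is a delicate finite computation rather than a structural argument, precisely because Gondran--Minoux dependence is governed by balances of determinants, whose circuits do not obey the circuit-exchange axioms of a matroid; indeed the whole content of the corollary is that these circuits fail to exchange. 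All the machinery needed for the verification --- expansion of determinants over $\smax$, the Gondran--Minoux theorem, and the Cramer rules of Section~\ref{sec-cramer} --- is already in place.
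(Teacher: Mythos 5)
Your opening reduction is sound: Gondran--Minoux independence is hereditary, so the independent finite subsets form an independence system, and refuting augmentation amounts to exhibiting two inclusion-maximal independent subsets of a common finite set with different cardinalities. You also point at the right object, the matrix $F$ of Proposition~\ref{mrmcGM}. But from there the proposal has a genuine gap, and in fact works on the wrong side of $F$. The anomaly of $F$ lives in its \emph{rows}: $\mrgm(F)=6>\rkdet(F)=5$. Its columns display no anomaly at all, since $\mcgm(F)=\rkdet(F)=5$, and your construction never invokes the row independence, which is the whole point of the example. The step you yourself flag as ``the main obstacle'' --- producing an inclusion-maximal independent family of columns of size strictly below $5$ --- is not established anywhere, and nothing in the paper supports it. Worse, the mechanism you propose for verifying it is invalid: Corollary~\ref{TGM} equates balanced determinant with Gondran--Minoux dependence only for $n$ vectors in $\rmax^n$ (square case). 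A candidate maximal family of size $4$ would have to be shown non-augmentable by proving that each $5$-element completion in $\rmax^6$ is dependent, and for such non-square families ``all $5\times 5$ minors balanced'' does \emph{not} imply dependence --- Proposition~\ref{mrmcGM} itself is the counterexample to that inference (the $6$ rows of $F$ have all their $6\times 6$ minors balanced, yet are independent). So even after the finite computation you defer, the dependences you need could not be concluded by the route you describe.

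The paper's proof avoids all of this by pairing the rows of $F$ with unit vectors. Take $U$ to be the $6$ rows of $F$, independent by Proposition~\ref{mrmcGM}, and $V$ the $7$ basis vectors $e_i$ of $\rmax^7$, trivially independent. If augmentation held, some $U\cup\{e_i\}$ would be independent, i.e.\ a $7\times 7$ matrix with Gondran--Minoux independent rows; Theorem~\ref{th-gmnew} (the square case, where the equivalence \emph{is} valid) forces its determinant to be unbalanced, and Laplace expansion along the row $e_i$ shows this determinant is, up to sign, the $6\times 6$ minor of $F$ obtained by deleting column $i$ --- balanced, since $\rkdet(F)=5$. Contradiction, with no further computation beyond what Proposition~\ref{mrmcGM} already provides. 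The unit vectors are exactly what makes the determinant expansion collapse onto the maximal minors of $F$; this is the idea missing from your proposal.
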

\begin{proof}
Take $U$ to be the set of the $6$ rows of the matrix $F$ in Proposition~\ref{mrmcGM}, which was shown to be linearly independent (in the Gondran-Minoux sense),
and let $V$ be the set of the $7$ basis vectors $e_i$ of $\rmax^7$
($e_i$ has a coefficient $\unit$ in position $i$ and $\zero$ elsewhere).
The latter set is trivially linearly independent.
If the augmentation axiom held, we could add to $U$ one of the basis vectors
in order to get a $7\times 7$ matrix the rows of which are linearly
independent. 
By Theorem~\ref{th-gmnew}, the determinant of this matrix would be unbalanced, and by expanding it with respect to the last row, we would get
a $6\times 6$ maximal submatrix of $F$ with an unbalanced determinant, 
contradicting the fact that $\rk_{\det}(F)=5$.
\end{proof}
A simple example, given in~\cite{AGG}, shows that the set
of finite subsets of $\rmax^n$ that are tropically linearly independent
does not yield a matroid structure, either.


%
%


Finally, the following corollary points out a situation where the main rank functions
coincide.
\begin{corollary}
Let $A\in \Mnm(\R)$ be such that $\mrgm(A)=2$. Then $\trop(A)=\rkdet(A)=\mrgm(A)=\mcgm(A)=f(A)=\ror(A)=2$.
\end{corollary}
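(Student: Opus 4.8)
The plan is to use the comparison inequalities of Theorem~\ref{TEq} to squeeze every rank in the list between $2$ and $2$, so the real work is to supply the two bounds that do \emph{not} come for free from the Hasse diagram of Figure~\ref{fig1}: a lower bound $\trop(A)\ge 2$ and an upper bound $\ror(A)\le 2$. First I record what the hypothesis gives: since $\mrgm(A)=2$, there are two rows $u,v$ of $A$ that are Gondran--Minoux independent, while every triple of rows of $A$ is Gondran--Minoux dependent.

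For the lower bound I would show that the pair $\{u,v\}$ already exhibits a $2\times2$ submatrix that is both tropically and determinantally nonsingular. By Remark~\ref{RIn1} the independent pair $u,v$ is weakly independent, so neither is $\zero$ and $u$ is not a scalar multiple of $v$. If, on the contrary, every $2\times2$ submatrix $\bigl(\begin{smallmatrix} u_j & u_k\\ v_j& v_k\end{smallmatrix}\bigr)$ were tropically singular, i.e.\ $u_j\odot v_k=u_k\odot v_j$ for all $j,k$, then comparing supports forces $u$ and $v$ to share the same support and $u_j-v_j$ to be constant on it, so $u=c\odot v$, contradicting non-proportionality. Hence some $2\times2$ submatrix $A'$ satisfies $u_j\odot v_k\ne u_k\odot v_j$, that is $|A'|^+\ne|A'|^-$; such a submatrix is at once tropically and determinantally nonsingular, so $\trop(A)\ge 2$ and $\rkdet(A)\ge 2$. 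Together with $\trop(A)\le\rkdet(A)\le\mrgm(A)=2$ (Remark~\ref{rk-tropdet} and Lemma~\ref{LO1new}), this yields $\trop(A)=\rkdet(A)=2$.

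The heart of the argument is the upper bound $\ror(A)\le 2$, which I would obtain from the theory of extremal rays. Since $\rosp(A)$ is finitely generated, hence a closed subsemimodule of $\R^n$, Lemma~\ref{C:ext_rays} identifies $\ror(A)$ with the number of extremal rays of $\rosp(A)$, which generate $\rosp(A)$ by Theorem~\ref{T:ext_rays}. Writing an extremal generator as a max-plus combination of the rows and applying its defining property repeatedly, one sees that every extremal generator is a scalar multiple of a single row; thus each extremal ray is generated by a row of $A$. Suppose $\ror(A)\ge 3$ and pick rows $r_1,r_2,r_3$ generating three distinct extremal rays. I claim they are Gondran--Minoux independent, contradicting $\mrgm(A)=2$. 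Indeed, any relation $\bigoplus_{i\in I}\alpha_i r_i=\bigoplus_{j\in J}\alpha_j r_j$ with $I\sqcup J=\{1,2,3\}$ and the $\alpha_i$ not all $\zero$ cannot have an empty side (that would force all $\alpha_i=\zero$, as the $r_i$ are nonzero), so after relabelling it reads $\alpha_1 r_1=\alpha_2 r_2\oplus\alpha_3 r_3$ with $\alpha_1\ne\zero$; then $r_1=\alpha_1^{-1}\alpha_2 r_2\oplus\alpha_1^{-1}\alpha_3 r_3$, and extremality of $r_1$ forces it to be proportional to $r_2$ or to $r_3$, contradicting the distinctness of the rays. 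Hence $\ror(A)\le 2$, and combined with $2=\mrgm(A)\le\facrk(A)\le\ror(A)$ (Lemma~\ref{lem-ineqhalf}) we obtain $\facrk(A)=\ror(A)=2$.

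It remains to place $\mcgm(A)$. Applying Lemma~\ref{LO1new} and Lemma~\ref{lem-ineqhalf} to $A^t$, and using that $\rkdet$ and $\facrk$ are invariant under transposition while $\mcgm(A)=\mrgm(A^t)$, gives $2=\rkdet(A)\le\mcgm(A)\le\facrk(A)=2$, so $\mcgm(A)=2$. Collecting the equalities proves the corollary. The main obstacle is the upper bound $\ror(A)\le 2$: all other values are pinned by the diagram once $\trop(A)=2$ and $\ror(A)=2$ are established, and it is precisely here that the hypothesis $\mrgm(A)=2$ is used in an essential way, through the extremal-ray characterization, rather than merely as a general bound (for which $\facrk$ may strictly exceed $\mrgm$, as in Example~\ref{ExfneK}).
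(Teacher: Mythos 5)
Your proof is correct, and its skeleton---pin $\trop(A)\ge 2$ from below, pin $\ror(A)\le 2$ from above, then squeeze everything in between via Theorem~\ref{TEq}---is exactly the paper's. Your lower bound is also the paper's argument, just in contrapositive form: the paper notes that $\trop(A)=1$ would force all rows of $A$ to be proportional, contradicting $\mrgm(A)=2$, which is precisely your support/constancy computation on $2\times 2$ minors. Where you genuinely diverge is the bound $\ror(A)\le 2$. The paper's route is direct and lighter: every three rows are Gondran--Minoux dependent, and since $\rmax$ is zero-sum free the singleton side of any such relation must carry an invertible coefficient, so among any three rows one is a max-plus linear combination of the other two; hence the row space is generated by at most two rows (formally $\ror(A)\le\sr(A)\le 2$, by Theorem~\ref{TCGB} or Remark~\ref{rk-general}). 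You instead route through the extremal-ray machinery: Lemma~\ref{C:ext_rays} identifies $\ror(A)$ with the number of extremal rays, each extremal ray of $\rosp(A)$ is spanned by a row, and three rows spanning distinct extremal rays would be Gondran--Minoux independent, because extremality upgrades the relation $r_1=\alpha_1^{-1}\alpha_2 r_2\oplus\alpha_1^{-1}\alpha_3 r_3$ to proportionality. Both arguments hinge on the same pivot (zero-sum-freeness forcing $\alpha_1\ne\zero$), but they spend it differently: the paper's version needs neither Theorem~\ref{T:ext_rays} nor any closedness consideration and takes two lines, while yours buys a cleaner bookkeeping step---the passage from ``one of any three rows is redundant'' to ``two rows generate,'' which the paper leaves implicit, is handled for you by the extremal-ray count. (Incidentally, your appeal to ``finitely generated hence closed'' can be skipped: Lemma~\ref{C:ext_rays} is already stated for row spaces of matrices.) Your explicit transposition argument for $\mcgm(A)$ is fine but unnecessary, since the diagram of Theorem~\ref{TEq} already places $\mcgm(A)$ between $\rkdet(A)$ and $\facrk(A)$.
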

\d
Since $\trop(A)\le \mrgm(A)=2$, it follows that $\trop(A) $ is either 1 or 2
(excluding the trivial case where $A$ is the zero matrix). But if $\trop(A)=1$,
all the rows of $A$ would be proportional, contradicting $\mrgm(A)=2$. 
Hence, $\trop(A)=2$. 
Since every 3 rows of $A$ are Gondran-Minoux linearly dependent, one of these rows must be a linear combination of the others.
Therefore, $\ror(A)\leq 2$. Then, the result follows from Theorem~\ref{TEq}.
\dd

\begin{re}
The first part of Theorem~\ref{thbut} shows that if $A\in \MM_n(\rmax)$, 
it can be checked whether $\trop(A)=n$ in polynomial time. 
In~\cite{AGG}, we show that when $A\in \Mnm(\rmax)$, checking whether 
the tropical rank of $A$ is full, i.e., whether $\trop(A)=\min(m,n)$,
reduces to solving a mean payoff game. Thus, this problem belongs to
$\text{\bf NP} \cap \, \text{\bf co-NP}$, and is therefore
likely to be easy. 
This should be
opposed to a result of Kim and Roush~\cite{kim},
showing that the more general problem of computing $\trop(A)$
is $\text{\bf NP}$-hard.\typeout{TO BE CHECKED}
\end{re}

\section{Arithmetic behavior of rank functions}

In this section, we establish max-algebraic analogues of classical inequalities concerning the rank of the sum, product, or union of two matrices.

\begin{theorem}[\bf Rank-sum inequalities] \label{TSum}
For all matrices $A,B\in \Mnm(\R)$, the following inequalities hold:
\begin{enumerate}
\item 
$\facrk(A\oplus B) \le \facrk(A)+\facrk(B)$; \label{Sumf}
\item\label{rkdetsum} $\rk_{\det}(A\oplus B) \le \rk_{\det}(A)+ \rk_{\det} (B)$;
\item\label{troprksum} $ \trop(A\oplus B) \le \trop(A)+ \trop (B)$.
\end{enumerate}
\end{theorem}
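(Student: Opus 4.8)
The plan is to dispatch the three inequalities separately, the first being immediate and the other two sharing a common combinatorial core. For \eqref{Sumf} I would use the characterization recalled after Definition~\ref{Df}: $\facrk(M)$ equals the least number of factor-rank-one matrices whose $\oplus$-sum is $M$. Writing $A$ as an $\oplus$-sum of $\facrk(A)$ rank-one matrices and $B$ as an $\oplus$-sum of $\facrk(B)$ rank-one matrices, the matrix $A\oplus B$ is displayed at once as an $\oplus$-sum of $\facrk(A)+\facrk(B)$ rank-one matrices, which gives the bound.

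For \eqref{rkdetsum} and \eqref{troprksum} I would argue by the contrapositive, working directly on the optimal assignment problem rather than through the symmetrized determinant. (A tempting route is to expand $\detp{A\oplus B}$ by row-multilinearity and then apply a Laplace expansion via the transfer principle; this fails, because over $\smax$ the Laplace identity only holds up to $\balance$, and $x\balance y$ with $y$ balanced does \emph{not} force $x$ balanced.) Set $p:=\rkdet(A)$, $q:=\rkdet(B)$, fix any $(p+q+1)\times(p+q+1)$ submatrix $C$ of $A\oplus B$, let $\sigma$ be a permutation of maximal weight $V=\sum_i C_{i\sigma(i)}$ in $C$, and split the rows as $T_A:=\{i:A_{i\sigma(i)}\ge B_{i\sigma(i)}\}$ and $T_B$ its complement, so that $C_{i\sigma(i)}=A_{i\sigma(i)}$ on $T_A$ and $C_{i\sigma(i)}=B_{i\sigma(i)}$ on $T_B$. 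Since $|T_A|+|T_B|=p+q+1$, pigeonhole forces $|T_A|\ge p+1$ or $|T_B|\ge q+1$; assume the former.

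The heart of the argument is a local exchange. Writing $\tau$ for the restriction of $\sigma$ to the bijection $T_A\to\sigma(T_A)$, an exchange argument shows $\tau$ has maximal weight in the submatrix $A[T_A\vert\sigma(T_A)]$: any strictly heavier local bijection, spliced with $\sigma|_{T_B}$, would yield a permutation of weight exceeding $V$ in $C$, because $C_{ij}\ge A_{ij}$. Now $|T_A|\ge p+1>\rkdet(A)$, so this submatrix satisfies $|{A[T_A\vert\sigma(T_A)]}|^+=|{A[T_A\vert\sigma(T_A)]}|^-$; hence its optimal weight is also attained by a local bijection $\tau'$ of parity opposite to that of $\tau$. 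Splicing $\tau'$ with $\sigma|_{T_B}$ produces a permutation $\sigma'$ of $C$ of weight $V$ and of parity opposite to $\sigma$, the parity change being exactly $\sgn(\tau'\tau^{-1})=-1$. Thus both an even and an odd permutation attain $V$ in $C$, so $|C|^+=|C|^-$: every $(p+q+1)$-minor of $A\oplus B$ is balanced, which is \eqref{rkdetsum}. For \eqref{troprksum} the same construction works with $\rkdet$ replaced by $\trop$: now $|T_A|\ge\trop(A)+1$ makes $A[T_A\vert\sigma(T_A)]$ tropically singular, so its optimal weight is attained by two \emph{distinct} local bijections; splicing gives two distinct maximal-weight permutations of $C$, i.e. $C$ is tropically singular.

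The step I expect to be the main obstacle — and the reason the superficial determinant-expansion approach collapses — is controlling parities (or multiplicities) globally from local data. What makes it go through is that maximality of $\sigma$ in $C$ forces maximality of its restriction $\tau$ in the pure $A$-block $A[T_A\vert\sigma(T_A)]$, via the exchange argument, so that the degeneracy guaranteed by the size bound $|T_A|>\rkdet(A)$ (resp. $>\trop(A)$) can be transported back, through a single block-swap, into a genuine second optimal permutation of $C$ of the required type. Verifying that a block-swap flips the global parity exactly when it flips the local parity is the one computation to carry out with care.
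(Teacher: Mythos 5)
Your proposal is correct and is essentially the paper's own argument: for items (2) and (3) the paper likewise reduces to a minor of size $\rkdet(A)+\rkdet(B)+1$, pigeonholes the diagonal of a maximal-weight permutation according to whether $A$ or $B$ attains each diagonal entry, and propagates degeneracy between the resulting large $A$-block (which the size bound forces to be balanced, resp.\ tropically singular) and the whole minor by exactly your splicing of an optimal block permutation with the remaining diagonal entries. The differences are presentational only: the paper first normalizes with the Hungarian algorithm --- a permutation and two invertible diagonal scalings make the optimal permutation the identity, the diagonal entries $\unit$, and all entries $\le\unit$ --- so that the $A$-block is principal and your parity check $\sgn(\tau'\tau^{-1})=-1$ becomes automatic (an odd block permutation extends by the identity to an odd permutation of the minor), and for item (1) the paper cites \cite[Proposition~4.2]{BG} where you argue directly via decompositions into factor-rank-one summands.
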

\d
1.~The first inequality follows from~\cite[Proposition 4.2]{BG}.


2.~Let $A=(a_{ij})$, $B=(b_{ij})$. We denote $\rk_{\det}(A)=r_1$, $\rk_{\det}(B)=r_2$. Assume by contradiction that there is a minor of size $k:=r_1+r_2+1$ in the matrix $A\oplus B$ with different positive and negative determinants. 
From the monotonicity of $\rk_{\det}$ (Remark~\ref{monrank}),
 we may assume without loss of generality that $k=m=n$. Then, we can find
a permutation matrix $P$ and invertible diagonal matrices $D,D'$,
all with entries in $\rmax$, such that the matrix $C:=PD(A\oplus B)D'$ 
has the following properties: $C_{ij}\leq \unit$ and $C_{ii}=\unit$ for
all $i,j$. Indeed, such a transformation is obtained when
applying the Hungarian algorithm to solve the optimal
assignment problem for the matrix $A\oplus B$ (the scaling matrices
$D,D'$ coincide, up to a permutation of coordinates, with the optimal
variables of the dual linear problem). 
We shall assume without loss of generality that $C=A\oplus B$. 
In particular, $a_{ij}\le \ooi $, $b_{ij}\le \ooi$.
Since all diagonal entries of $C$ are equal to $\unit$, and all entries are
less or equal to $\unit$, we get that $|C|^+=\unit$, and $|C|^-\leq \unit$.
Moreover, by assumption $|C|^+$ and $|C|^-$ must be different,
hence $|C|^-<\unit$.

We denote $I_1=\{i\vert a_{ii}=\ooi\}$ and $I_2=\{i\vert b_{ii}=\ooi\}$, $s_1=\card{I_1}$, $s_2=\card{I_2}$. Note that $I_1\cup I_2=\{1,\ldots,n\}$, since all diagonal elements of $C=A\oplus B$ are equal to $\unit$, hence $s_1+s_2\ge n$.
This implies that either $s_1\ge r_1+1$ or $s_2\ge r_2+1$. Assume without loss of generality that $s_1\ge r_1+1$ and $I_1=\{1,\ldots, s_1\}$. 

Let $\hat A =A[I_1,I_1]$ and $\hat C=C[I_1,I_1]$ be the principal submatrices of $A$ and $C$ respectively, with rows and columns in $I_1$. Then, $|\hat A|^+=\unit$ and $|\hat A|^-\leq |\hat C|^-\leq \unit$, since all diagonal entries of $\hat A$ are equal to $\unit$, and all entries of $\hat C$ are less or equal to $\unit$.
Let us show that $|\hat C|^-< \unit$. 
Indeed, otherwise if $|\hat C|^-=\unit$, there exists an odd permutation $\sigma$ of $I_1$, such that 
$$C_{1\sigma(1)} \cdots  C_{s_1\sigma(s_1)}=\ooi\enspace.$$
Let $\tau$ be   the permutation of $\{1,\ldots, n\}$ such that $\tau(i)=\sigma(i)$ for $i=1,\ldots, s_1$ and $\tau(i)=i$ for $i=s_1+1,\ldots, n$. 
Since all diagonal entries of $C$ are equal to $\unit$, we get that 
\[ C_{1\tau(1)} \cdots  C_{n\tau(n)}=\ooi\enspace,\]
and since the permutation $\tau$ is odd, we deduce that 
$|C|^-=\unit$, a contradiction.
Hence $|\hat C|^-< \unit$, and since $|\hat A|^-\leq |\hat C|^-$ and
$|\hat A|^+=\unit$, we get that 
$|\hat A|^+\ne |\hat A|^-$, i.e., $\rk_{\det} A\ge s_1\ge r_1+1>r_1$. This contradiction concludes the proof.

3.~The proof of the third inequality is similar to the previous one, with the unique difference that we consider all the permutations of $\{1,\ldots, s_1\}$
and not only the odd ones.
\dd

\begin{re}
It is shown in~\cite[Proposition 7.2]{BG} that $1\le \ror(A\oplus B)$ and for any $r_1,r_2$ there are matrices of row ranks $r_1, r_2$ correspondingly such that their sum has row rank 1. Example~\ref{ExrSum} below shows that over the max-plus semiring, the row rank of the sum of two matrices may be also greater than the sum of their row ranks, so there is no reasonable upper bound for the row rank of a sum of matrices.
\end{re}

\begin{ex} \label{ExrSum}
Let us consider the following two matrices
$$
A=\left[\begin{array}{ccc} \ooo  & \ooi & \ooo  \\  \ooo  & \ooo  & \ooi \\ \ooo  & \ooo  & \ooi \\  \ooo  & \ooi & \ooo  \end{array}\right],\quad B= \left[\begin{array}{ccc} \ooo  & \ooo  & \ooo  \\  \ooo  & \ooo  & \ooo  \\ \ooi & \ooo  & \ooo  \\ \ooi & \ooo  & \ooo   \end{array}\right].
$$
Then it is straightforward to see that $\ror(A)=2$, $\ror(B)=1$, however $A\oplus B$ is the matrix $X$ from Example~\ref{Xi}. Thus $\ror(A\oplus B)=4>3=\ror(A)+\ror(B)$.
\end{ex}


\begin{theorem}[\bf Rank-product inequalities]
For all matrices $A\in \Mnm(\R)$, $B\in {\cal M}_{n\,k}(\R)$, the following inequalities hold:
\begin{enumerate}
\item $ \facrk(A  B) \le \min\{\facrk(A),\facrk(B)\}$;
\item $\rk_{\det}(A B) \le \min\{\rk_{\det}(A), \rk_{\det} (B)\}$;
\item $\trop(A B) \le \min\{\trop(A), \trop (B)\}$.
\end{enumerate}
\end{theorem}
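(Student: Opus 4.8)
The plan is to treat the three inequalities separately: the first is elementary, while the other two both rest on the \emph{strong} form of the Cauchy--Binet identity recorded in Example~\ref{ex-binet-chauchy}. For part~1 I would argue straight from the definition of factor rank. Writing $A=PQ$ with $P\in\MM_{m,r}(\R)$, $Q\in\MM_{r,n}(\R)$ and $r=\facrk(A)$, we have $AB=P(QB)$, whence $\facrk(AB)\le r=\facrk(A)$; factoring $B=P'Q'$ with $Q'\in\MM_{s,k}(\R)$, $s=\facrk(B)$, and writing $AB=(AP')Q'$ gives $\facrk(AB)\le\facrk(B)$ in the same way. Thus $\facrk(AB)\le\min\{\facrk(A),\facrk(B)\}$, and this uses nothing special about $\R$.

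For part~2 I would first stress that the \emph{weak} (balance) form of Cauchy--Binet is not enough: a signed element of $\smax$ can balance a balanced element of larger modulus, so the relation $\detp{(AB)[\alpha\vert\beta]}\balance(\text{something balanced})$ alone does not force the left-hand side to be balanced. The remedy is the strong form of Example~\ref{ex-binet-chauchy}: for any integer $q$ and any $\alpha\in Q_{q,m}$, $\beta\in Q_{q,k}$ there is $s\in\R$ with
\begin{align*}
|(AB)[\alpha\vert\beta]|^+ &= \sum_{\omega\in Q_{q,n}}\bigl(|A[\alpha\vert\omega]|^+\,|B[\omega\vert\beta]|^+ + |A[\alpha\vert\omega]|^-\,|B[\omega\vert\beta]|^-\bigr)+s,\\
|(AB)[\alpha\vert\beta]|^- &= \sum_{\omega\in Q_{q,n}}\bigl(|A[\alpha\vert\omega]|^+\,|B[\omega\vert\beta]|^- + |A[\alpha\vert\omega]|^-\,|B[\omega\vert\beta]|^+\bigr)+s.
\end{align*}
Taking $q=\rkdet(A)+1$, each $A[\alpha\vert\omega]$ is a $q\times q$ submatrix, so by maximality of the determinantal rank $|A[\alpha\vert\omega]|^+=|A[\alpha\vert\omega]|^-$; substituting this common value makes the two displayed right-hand sides literally identical, so $|(AB)[\alpha\vert\beta]|^+=|(AB)[\alpha\vert\beta]|^-$. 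Hence no $q\times q$ minor of $AB$ is determinantally nonsingular and $\rkdet(AB)\le\rkdet(A)$. Choosing instead $q=\rkdet(B)+1$ uses $|B[\omega\vert\beta]|^+=|B[\omega\vert\beta]|^-$ to collapse both right-hand sides to the same expression, giving $\rkdet(AB)\le\rkdet(B)$.

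Part~3 is the same computation transported into the extended tropical semiring $\Ti$. Since the symmetry of $\Ti$ is the identity, a matrix $M\in\MM_q(\R)$ is tropically singular exactly when $\detp{\imath(M)}=|\imath(M)|^+\oplus|\imath(M)|^-$ lies in the ideal $\Tic$. Interpreting the two strong Cauchy--Binet equalities over $\Ti$ and adding them, the four terms indexed by each $\omega$ recombine into $\detp{\imath(A[\alpha\vert\omega])}\,\detp{\imath(B[\omega\vert\beta])}$ and $s\oplus s=s^\circ\in\Tic$, so that
\[
|\imath(AB)[\alpha\vert\beta]|^+\oplus|\imath(AB)[\alpha\vert\beta]|^-=\bigoplus_{\omega\in Q_{q,n}}\detp{\imath(A[\alpha\vert\omega])}\,\detp{\imath(B[\omega\vert\beta])}\oplus s^\circ .
\]
With $q=\trop(A)+1$ every factor $\detp{\imath(A[\alpha\vert\omega])}$ lies in $\Tic$; since $\Tic$ is an ideal, the whole right-hand side is balanced, so every $q\times q$ minor of $AB$ is tropically singular and $\trop(AB)\le\trop(A)$. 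The roles of $A$ and $B$ are symmetric here, which yields $\trop(AB)\le\trop(B)$ as well.

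The main obstacle to anticipate is precisely the inadequacy of the weak transfer principle: balance is not antisymmetric against balanced elements, so the entire argument hinges on the explicit shared remainder $s$ supplied by the strong form, which is what forces the positive and negative parts of the product minor to coincide over $\smax$ (respectively to sum to a balanced element over $\Ti$) once the factor minors are balanced. One should also record carefully that $\smax^\circ$ and $\Tic$ are ideals, so that products and sums of balanced (bi)determinants remain balanced.
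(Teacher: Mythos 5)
Your first two inequalities are fine. For the first you give a direct factorization argument ($AB=P(QB)$ and $AB=(AP')Q'$) where the paper simply cites a result of Beasley and Guterman; for the second, your pair of bideterminant identities sharing the remainder $s$ is exactly the paper's own proof, which packages the same information as the relation $\detp{(AB)[\alpha\vert\beta]}\succeq^\circ\bigoplus_{\omega}\detp{A[\alpha\vert\omega]}\odot\detp{B[\omega\vert\beta]}$ over $\smax$ obtained from the strong transfer principle; your remark that the weak (balance) form of Binet--Cauchy cannot suffice is also the right diagnosis. One small point: run the argument for every size $q>\rkdet(A)$, not only $q=\rkdet(A)+1$, since $\rkdet$ is defined by the existence of a nonsingular minor of some size; the argument is verbatim the same (the paper treats all sizes $s>r$ at once).

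The third inequality, however, has a genuine gap: your displayed identity is false as written. Transferring Binet--Cauchy to $\Ti$ produces an identity whose left-hand side involves the minors of the product computed \emph{in} $\Ti$, i.e.\ of $\imath(A)\imath(B)$, not of $\imath(AB)$, the entrywise image of the max-plus product. These differ precisely because $\imath$ is not a morphism (Remark~\ref{rk-izahkian}): whenever the maximum in $\bigoplus_l a_{il}\odot b_{lj}$ is attained twice, the corresponding entry of $\imath(A)\imath(B)$ is balanced while that of $\imath(AB)$ is real. Concretely, take $A=[\unit\;\unit]$, $B=[\unit\;\unit]^t$ and $q=1$: your left-hand side equals $\imath(\unit)$, which is real and non-zero, whereas your right-hand side equals $\imath(\unit)\oplus\imath(\unit)\oplus s^\circ$, which lies in $\Tic$ whatever $s$ is; so the two sides differ. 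What your computation actually proves is that every $q\times q$ minor of $\imath(A)\imath(B)$ has balanced permanent when $q>\trop(A)$. That is strictly weaker than tropical singularity of the minors of $AB$: a permanent over $\Ti$ can be balanced merely because a balanced entry sits on the unique optimal permutation. This is exactly where $\Ti$ behaves worse than $\smax$; in part~2 the embedding of $\rmax$ into $\smax$ \emph{is} a morphism, so the identical scheme is watertight there. (To be fair, the paper's own one-line proof of the third inequality, ``replace $\smax$ by $\Ti$'', silently passes over the same point.)

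To close the gap a combinatorial supplement is needed. Given $q>\trop(A)$ and a $q\times q$ minor of $AB$, write its max-plus permanent as $\max_{\sigma}\max_{f}\sum_i\bigl(a_{\alpha_i f(i)}+b_{f(i)\beta_{\sigma(i)}}\bigr)$, the inner maximum being over all maps $f$ from $\{1,\dots,q\}$ to $\{1,\dots,n\}$, and pick an optimal pair $(\sigma_0,f_0)$. If $f_0$ is not injective, composing $\sigma_0$ with the transposition of two indices having the same $f_0$-value leaves the value unchanged and produces a second optimal permutation, so the minor is tropically singular. If $f_0$ is injective with image $\omega$, optimality forces the permutation of $A[\alpha\vert\omega]$ induced by $f_0$ to be optimal for $\per A[\alpha\vert\omega]$; since $A[\alpha\vert\omega]$ is tropically singular by hypothesis, it admits a second optimal permutation, and recombining it with the $B$-part of $(\sigma_0,f_0)$ again yields a second optimal permutation of the minor of $AB$. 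Either way the minor is tropically singular, whence $\trop(AB)\le\trop(A)$, and the bound by $\trop(B)$ follows by transposing.
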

\d
1.~The first inequality follows from~\cite[Proposition 4]{BG}.

2.~Recall that $C[I\vert J]$ denotes the $I\times J$ submatrix of a matrix $C$.
When both $I,J$ have  $s$ elements,
the strong form of the transfer principle (Theorem~\ref{transfer-s-2}), applied in the semiring $\smax$ to the
Binet-Cauchy formula (see Example~\ref{ex-binet-chauchy}) gives:
%
\begin{align*}
|(AB)[I\vert J]|
\succeq^\circ
\bigoplus\limits_{K\in Q_{s,n}} (|A[I\vert K]|\odot  |B[K\vert J]|)\enspace .
\end{align*}
By convention,
the sum is zero if $s>n$. Let $r:=\min(\rk_{\det}(A),\rk_{\det}(B))$.
If $s>r$, all the terms at the right hand side of the latter sum
are balanced. It follows that $|(AB)[I\vert J]|$ is balanced,
showing that $\rk_{\det}(AB)\leq r$.

3.~The third inequality is proved by replacing the semiring $\smax$ by the semiring $\Ti$ in the previous argument (recall that a square matrix with entries in $\rmax$ is tropically singular if and only if its determinant, when interpreted in $\Ti$, is balanced).

\dd

\begin{re}
Note that it may happen that $\ror(A B)>\ror(B)$ for some matrices $A$ and $B$, see Example~\ref{Exr(AB)}. 
\end{re}

\begin{ex} \label{Exr(AB)}
Let $$A=\left[ \begin{array}{cccc}
\ooi& \ooo&\ooo&  \ooo\\
\ooo&\ooi& \ooo&\ooo\\
\ooo& \ooi & \ooi& \ooo\\
\ooi&\ooo& \ooi &\ooo\end{array}\right]
, \quad B= \left[ \begin{array}{ccc}
 \ooo&\ooi&\ooo\\
\ooo&\ooo&\ooi\\
\ooi&\ooo&\ooo\\
\ooi &\ooo&\ooo\end{array}\right].
$$
Then $$A B= \left[ \begin{array}{ccc}
 \ooo&\ooi&\ooo\\
\ooo&\ooo&\ooi\\
\ooi&\ooo&\ooi\\
\ooi &\ooi&\ooo\end{array}\right].
$$
By using Example~\ref{Xi} we see that $\ror(A)=4$. It is straightforward to check that $\ror(B)=3$, however, $\ror(A B)=4$, cf. Example~\ref{Xi}.
\end{ex}
%


\begin{theorem}[\bf Ranks of matrix union]
For all $A\in \Mnm(\R)$ and $B\in {\cal M}_{m\,u}(\R)$ the following inequalities for the matrix union, denoted by $(A\vert B)$, hold: 
\begin{enumerate}
\item $ \max\{\ror(A),\ror(B)\}\le \ror(A\vert B) $; 
\item $\cor(A\vert B) \le \cor(A)+\cor(B)$ but it can be less than $\min\{\cor(A),\cor(B)\}$;
\item $\max\{\facrk(A),\facrk(B)\} \le \facrk(A\vert B) \le \facrk(A)+\facrk(B)$;
\item $\max\{\trop(A),\trop(B)\} \le \trop(A\vert B) \le \trop(A)+\trop(B)$;
\item $\max\{\rk_{\det}(A),\rk_{\det}(B)\} \le \rk_{\det}(A\vert B) \le \rk_{\det}(A)+\rk_{\det}(B)$.
\end{enumerate}
\end{theorem}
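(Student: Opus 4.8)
The plan is to split each two-sided estimate into its lower bound, which comes from monotonicity, and its upper bound. First I would dispose of every lower bound $\max\{\cdots\}\le\,\cdot\,(A\vert B)$ appearing in (1),(3),(4),(5). Since $A$ and $B$ are exactly the submatrices of $(A\vert B)$ obtained by deleting, respectively, the last $u$ or the first $n$ columns, the monotonicity of $\facrk$, $\trop$ and $\rkdet$ under passage to submatrices (Remark~\ref{monrank}) gives at once $\facrk(A),\facrk(B)\le\facrk(A\vert B)$, and likewise for $\trop$ and $\rkdet$; for the row rank the bound $\max\{\ror(A),\ror(B)\}\le\ror(A\vert B)$ follows from the monotonicity of $\ror$ under column deletion proved in~\cite{BP4}. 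No lower bound is asserted for $\cor$ and no upper bound for $\ror$, and indeed none can hold, because the weak dimension is not monotone (Remark~\ref{re:weaknonincr}, Example~\ref{ExrSum}).

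Two of the upper bounds are elementary. For the factor rank, writing $A=B_AC_A$ and $B=B_BC_B$ with $B_A\in\MM_{m,\facrk(A)}$ and $B_B\in\MM_{m,\facrk(B)}$, the block factorisation
\[(A\vert B)=\begin{bmatrix}B_A&B_B\end{bmatrix}\begin{bmatrix}C_A&\zero\\\zero&C_B\end{bmatrix}\]
(with zero off-diagonal blocks) exhibits $(A\vert B)$ as a product through $\facrk(A)+\facrk(B)$ columns, whence $\facrk(A\vert B)\le\facrk(A)+\facrk(B)$. For the column rank, $\cosp(A\vert B)=\cosp(A)+\cosp(B)$, so the union of a weakly independent generating family of $\cosp(A)$ with one of $\cosp(B)$ generates $\cosp(A\vert B)$; since by Remark~\ref{rem-weakdim} the weak dimension is the least cardinality of a generating family, $\cor(A\vert B)\le\cor(A)+\cor(B)$. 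To show that $\cor(A\vert B)$ can be strictly below $\min\{\cor(A),\cor(B)\}$ I would exploit the non-monotonicity directly: take $A$ with columns $[\ooi,\ooo,\ooo]^t,[\ooo,\ooi,\ooo]^t,[1,\ooi,\ooi]^t,[\ooi,1,\ooi]^t$ and $B$ with the cyclically shifted columns $[\ooo,\ooi,\ooo]^t,[\ooo,\ooo,\ooi]^t,[\ooi,1,\ooi]^t,[\ooi,\ooi,1]^t$ in $\rmax^3$. A direct computation of extremal rays (Corollary~\ref{C1:ext_rays}) shows that each of $\cosp(A)$ and $\cosp(B)$ has four extremal rays, so $\cor(A)=\cor(B)=4$, whereas the columns of $(A\vert B)$ include all three unit vectors, so $\cosp(A\vert B)=\rmax^3$ and $\cor(A\vert B)=3$.

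The remaining upper bounds, (4) and (5), are the substantial ones, and I would prove them in parallel, exactly along the lines of the rank-product inequalities above. Fix an $s\times s$ submatrix $S$ of $(A\vert B)$ and let $J_1,J_2$ be the sets of its columns coming from $A$ and from $B$, with $|J_1|=k_1$, $|J_2|=k_2$, $k_1+k_2=s$. The Laplace expansion of a determinant along the block of columns $J_1$ is a polynomial identity valid in every commutative ring; on expansion each transversal monomial occurs once, and a transversal monomial determines the parity of its permutation, so the positive and negative parts of the right-hand side share no monomial. Hence the strong transfer principle (Theorem~\ref{transfer-s-2}), read in $\smax$ for (5) and in $\Ti$ for (4), yields
\[\detp S\;\succeq^\circ\;\bigoplus_{I_1}\epsilon_{I_1}\,\detp{A[I_1\,|\,J_1]}\odot\detp{B[I_1^{c}\,|\,J_2]},\]
where $I_1$ runs over the $k_1$-subsets of the rows of $S$, $I_1^c$ is its complement, and $\epsilon_{I_1}\in\{\unit,\ominus\unit\}$. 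Now assume $s>\rkdet(A)+\rkdet(B)$ (resp.\ $s>\trop(A)+\trop(B)$); then $k_1>\rkdet(A)$ or $k_2>\rkdet(B)$ (resp.\ with $\trop$). In the first case every $\detp{A[I_1\,|\,J_1]}$ is a $k_1\times k_1$ minor of $A$, hence balanced, and since the balanced elements form an ideal each product, and therefore the whole right-hand side, is balanced; as $\succeq^\circ$ only adds a balanced term, $\detp S$ is balanced as well. The other case is symmetric. Thus no $s\times s$ submatrix with $s>\rkdet(A)+\rkdet(B)$ has an unbalanced determinant in $\smax$, i.e.\ $\rkdet(A\vert B)\le\rkdet(A)+\rkdet(B)$; reading the same argument in $\Ti$ and recalling that tropical singularity means a balanced determinant in $\Ti$ gives $\trop(A\vert B)\le\trop(A)+\trop(B)$.

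The step I expect to be the main obstacle is precisely this last one. The weak transfer principle would only give a balance $\detp S\balance(\cdots)$, which is useless here: a signed element may balance a balanced one, so a balanced right-hand side does not force $\detp S$ to be balanced. It is the passage to the stronger relation $\succeq^\circ$---available because the Laplace expansion has no cancelling monomials---that makes the conclusion go through, exactly as in the proof of the rank-product inequality for $\rkdet$ and $\trop$.
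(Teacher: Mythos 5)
Your proof is correct, but on the substantial items (4) and (5) it takes a genuinely different route from the paper's. The paper handles those upper bounds by the one-line observation that $(A\vert B)=[A,\zero]\oplus[\zero,B]$ (zero-padding changing neither $\trop$ nor $\rkdet$, since any square submatrix meeting a zero column is singular in either sense) and then invokes the rank-sum inequalities of Theorem~\ref{TSum}, whose proof rests on a Hungarian-algorithm scaling argument; you instead prove the bounds directly, by applying the strong transfer principle (Theorem~\ref{transfer-s-2}) to the block Laplace expansion of an $s\times s$ minor of $(A\vert B)$ along its columns coming from $A$ --- exactly the pattern the paper itself uses for the rank-product inequalities, with Laplace in place of Binet--Cauchy. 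Your verification of the no-common-monomial hypothesis (each transversal monomial arises from a unique permutation, hence from a unique $I_1$ and with a well-defined sign) is precisely what makes the strong form applicable, and your closing remark is on target: the weak form would only give $\detp{S}\balance Q$, which does not force $\detp{S}$ to be balanced when $Q$ is, since a signed element can balance a balanced one (e.g.\ $5\,\balance\, 10^\circ$ in $\smax$). The trade-off is that the paper's reduction is shorter but leans on Theorem~\ref{TSum} (and, for item (3), on a citation to~\cite[Lemma 3.17]{Psh}, where you give the self-contained factorization of $(A\vert B)$ as $[B_A\;B_B]$ times a block-diagonal matrix); your argument is self-contained modulo the transfer principle, works verbatim in any semiring with symmetry, and yields as a byproduct a symmetrized/tropical analogue of the block Laplace expansion. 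Your counterexample in (2) also differs from the paper's (which reuses the Boolean matrices of Example~\ref{Xi}), but it is valid: the four columns of each of your matrices form a weakly independent generating family, so $\cor(A)=\cor(B)=4$ by Corollary~\ref{C1:ext_rays}, while the columns of $(A\vert B)$ include the three unit vectors, whence $\cor(A\vert B)=3$.
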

\begin{proof}
1.~The lower bound of $\ror(A\vert B)$ follows easily from the definition.
(We note that Example~\ref{ExrSum} shows that there is no reasonable
upper bound of this quantity.)


2.~The upper bound follows directly from the definitions.  Also for the matrices
$$
A=\left[\begin{array}{cccc} \ooi & \ooo & \ooi & \ooo \\
 \ooo & \ooi & \ooo & \ooi \\  \ooo & \ooo & \ooi & \ooi \end{array}\right], \quad B=\left[\begin{array}{cccc}  \ooo  & \ooo & \ooi & \ooi \\
 \ooo & \ooi & \ooo & \ooi \\ \ooi& \ooo & \ooi& \ooo  \end{array}\right]
$$
one has by Example~\ref{Xi} that $\cor(A)=4,\cor(B)=4$, but $\cor(A\vert B)=3<4$.

3.~Follows from~\cite[Lemma 3.17]{Psh}.

4.~The lower bound is evident. Observe that $(A\vert B)=[A,\zero]\oplus [\zero, B]$ where $\zero$ denotes the zero matrix of an arbitrary dimension. Then,
the upper bound follows from Theorem~\ref{TSum}, Assertion~\ref{troprksum}.

5.~Similarly, the lower bound is evident, whereas the upper bound follows from Theorem~\ref{TSum}, Assertion~\ref{rkdetsum}.
\end{proof}

\section*{Acknowledgments}


This paper was written when the third author was visiting the Maxplus team at INRIA, Paris - Rocquencourt, and INRIA, Saclay - \^Ile-de-France.  He would like to thank the colleagues from the both institutions for their warm hospitality. 

The authors thank the referee for his comments which led to some
improvements of the paper.






\def\cprime{$'$}

\end{document}